\documentclass[12pt]{amsart}

\usepackage{amsmath,amssymb,amscd,amsxtra,upref,stmaryrd}
\usepackage{latexsym}
\usepackage[dvips]{graphics,epsfig}
\usepackage{enumerate,mathrsfs}
\usepackage[colorlinks=true, pdfstartview=FitV, linkcolor=blue, citecolor=blue, urlcolor=blue]{hyperref}



\allowdisplaybreaks

\numberwithin{equation}{section}

\textwidth=1.2\textwidth \calclayout




\newtheorem{theorem}{Theorem}[section]
\newtheorem{lemma}[theorem]{Lemma}
\newtheorem{proposition}[theorem]{Proposition}
\newtheorem{corollary}[theorem]{Corollary}

\newtheorem{thm}{Theorem} 

\newtheorem{lem}[thm]{Lemma} 
\newtheorem{prop}[thm]{Proposition} 

\theoremstyle{remark}

\newtheorem{remark}{Remark}[section]




\newcommand{\na}{\mathbb{N}}
\newcommand{\re}{\mathbb{R}}
\newcommand{\ent}{\mathbb{Z}}

\newcommand{\abs}[1]{\left\vert #1 \right\vert}
\newcommand{\fr}[2]{{\textstyle \frac{#1}{#2}}}

\newcommand{\norm}[2]{\left\|#1\right\|_{#2}}

\newcommand{\cz}{Calder\'on-Zygmund }
\newcommand{\rn}{{{\mathbb R}^n}}
\newcommand{\rtn}{\re^{2n}}

\newcommand{\zz}{\mathbb Z}

\newcommand{\f}{\varphi}

\newcommand{\hf}{\widehat{f}}
\newcommand{\hg}{\widehat{g}}

\newcommand{\supp}{\mathrm{supp}}
\newcommand{\sw}{{\mathcal{S}}(\rn)}
\newcommand{\swp}{{\mathcal{S}'}(\rn)}

\newcommand{\F}{\mathcal{F}}

\newcommand{\B}{\mathscr{B}}
\newcommand{\Q}{\mathcal{Q}}

\newcommand{\Ff}{\mathscr{F}}

\newcommand{\eixxe}{e^{2\pi i x \cdot (\xi + \eta)}}

\newcommand{\dxe}{\,d\xi d\eta}

\newcommand{\beq}{\begin{equation}}
\newcommand{\eeq}{\end{equation}}

\newcommand{\bal}{\begin{align*}}
\newcommand{\eal}{\end{align*}}




\newcommand{\subRn}{{{\mathbb R}^n}}



\DeclareMathOperator*{\essinf}{ess\,inf}
\DeclareMathOperator*{\esssup}{ess\,sup}


\newcommand{\pp}{{p(\cdot)}}
\newcommand{\cpp}{{p'(\cdot)}}
\newcommand{\Lp}{L^{p(\cdot)}}
\newcommand{\Pp}{\mathcal P}
\newcommand{\qq}{{q(\cdot)}}

\newcommand{\rr}{{r(\cdot)}}

\newcommand{\bp}{{\bar{p}(\cdot)}}
\newcommand{\cbp}{{\bar{p}'(\cdot)}}
\newcommand{\bq}{{\bar{q}(\cdot)}}
\newcommand{\cbq}{{\bar{q}'(\cdot)}}
\newcommand{\br}{{\bar{r}(\cdot)}}
\newcommand{\cbr}{{\bar{r}'(\cdot)}}

\newcommand{\Rf}{\mathcal{R}}


\def\Xint#1{\mathchoice
   {\XXint\displaystyle\textstyle{#1}}%
   {\XXint\textstyle\scriptstyle{#1}}%
   {\XXint\scriptstyle\scriptscriptstyle{#1}}%
   {\XXint\scriptscriptstyle\scriptscriptstyle{#1}}%
   \!\int}
\def\XXint#1#2#3{{\setbox0=\hbox{$#1{#2#3}{\int}$}
     \vcenter{\hbox{$#2#3$}}\kern-.5\wd0}}

\def\avgint{\Xint-}

\begin{document}

\title[Kato-Ponce inequalities]{Kato-Ponce inequalities on weighted and  variable Lebesgue spaces}
\author{David Cruz-Uribe, OFS \and Virginia Naibo}

\address{David Cruz-Uribe, OFS, Department of Mathematics, University of
  Alabama, Tusca\-loosa, AL 35487-0350, USA.} 
\email{dcruzuribe@ua.edu}

\address{Virginia Naibo, Department of Mathematics, Kansas State University.
138 Cardwell Hall, 1228 N. 17th Street, Manhattan, KS  66506, USA.}
\email{vnaibo@math.ksu.edu}

\thanks{The first author is supported by the NSF under grant DMS
  1362425, and by research funds from the Dean of the College of Arts
  \& Sciences. The second author is supported by the NSF under grant DMS 1500381.}

\subjclass[2010]{Primary: 42B25, 42B35.  Secondary: 42B20, 46E35, 47G40}

\date{\today}

\keywords{Kato-Ponce inequalities, fractional Leibniz rules, weights,
  variable Lebesgue spaces, bilinear extrapolation}

\begin{abstract}
We prove  fractional Leibniz rules and related commutator estimates
  in the settings of weighted and variable Lebesgue spaces.  Our main
  tools are uniform
  weighted estimates for sequences of square-function-type operators
  and a bilinear extrapolation theorem.  We also give applications of the extrapolation theorem to the
  boundedness on variable Lebesgue spaces of certain bilinear
  multiplier operators and singular integrals.
\end{abstract}

\maketitle


\section{Introduction and main results}

For $s \geq 0$, the inhomogeneous and homogeneous $s$-th differentiation operators $J^s$ and $D^s,$ respectively, are defined via the Fourier transform as
$$
\widehat{J^s(f)}(\xi)= (1+|\xi|^2)^\frac{s}{2} \hf(\xi) \quad \text{and}\quad \widehat{D^s(f)}(\xi)= |\xi|^s \hf(\xi).
$$
The following inequalities,  known as Kato-Ponce inequalities or fractional Leibniz rules, hold for such operators and for $f,g\in\sw$:
\begin{align}
  \norm{D^s(fg)}{L^r} &\lesssim \left( \norm{D^sf}{L^{p_1}} \norm{g}{L^{q_1}}+ \norm{f}{L^{p_2}} \norm{D^sg}{L^{q_2}}\right),\label{eq:KP1}\\
   \norm{J^s(fg)}{L^r} &\lesssim \left( \norm{J^sf}{L^{p_1}} \norm{g}{L^{q_1}}+ \norm{f}{L^{p_2}} \norm{J^sg}{L^{q_2}}\right), \label{eq:KP2}
\end{align}
where $1<p_1,p_2,q_1,q_2\le \infty,$ $\frac{1}{2}<r\le \infty,$
$\frac{1}{r}=\frac{1}{p_1}+\frac{1}{q_1}=\frac{1}{p_2}+\frac{1}{q_2},$
$s>\max\{0,n(\frac{1}{r}-1)\}$ or $s$ is a non-negative even integer,
and the implicit constants depend only on the parameters involved.
The cases $1<r<\infty,$ $1<q_1,p_2\le \infty,$ $1<p_1,q_2< \infty$ and
$s>0$ for such estimates have been known for a long time and go back
to the pioneering work in \cite{MR1124294, MR951744, MR1211741} for
the study of the Euler, Navier-Stokes and Korteweg-de~Vries equations.
Very recently in \cite{MR3200091} a different
approach was used to extend~\eqref{eq:KP1} and~\eqref{eq:KP2}
to the range
$\frac{1}{2}<r\le 1$ with $s>\max\{0,n(\frac{1}{r}-1)\}$ or $s$ a
non-negative even integer.  (Also see \cite{MR3192309} for
\eqref{eq:KP2} with $\frac{1}{2}<r\le 1$ and $s>n,$ and
\cite{MR3052499} for \eqref{eq:KP1}.) The case $r=\infty$ was settled
in \cite{MR3263081} (see also \cite{MR3189525}).

In \cite{MR951744}, an
important tool in the study of the Cauchy problem for the Euler and
Navier-Stokes equations in the setting of $L^p$-based Sobolev spaces
was the following commutator estimate closely related to
\eqref{eq:KP2}:
\begin{equation*}
 \norm{J^s(fg)-fJ^s(g)}{L^p} \lesssim  \norm{J^sf}{L^p} \norm{g}{L^\infty}+ \norm{\nabla f}{L^\infty} \norm{J^{s-1}g}{L^p},
 \end{equation*}
 where $1<p<\infty$ and $s\ge 0.$   For other commutator estimates of
 the sort, see~\cite{MR3263081} and the references it contains.

\bigskip

The goal of this paper is to prove Kato-Ponce inequalities in the
settings of weighted Lebesgue spaces and variable Lebesgue spaces.
The techniques employed in the context of weighted Lebesgue spaces
also let us obtain fractional Leibniz rules in weighted Lorentz
spaces and Morrey spaces.  To state our two main results, let
$\sw$ denote the Schwartz class of rapidly decreasing functions
defined on $\rn$ and let $A_p$ denote the Muckenhoupt class of
weights.  For brevity we will use the notation $A\lesssim B$ to mean
that $A\le cB$, where $c$ is a constant that may only depend on some
of the parameters and weights used but not on the functions
involved. For the notation used in the statement of
Theorem~\ref{thm:KPweighted}, we refer the reader to Section~\ref{sec:weights}.

\begin{theorem}\label{thm:KPweighted}
Let $1< p,q<\infty$ and $\frac{1}{2}<r<\infty$ be such that $\frac{1}{r}=\frac{1}{p}+\frac{1}{q}.$
If $v\in A_{p},$  $w\in A_{q}$,  and  $s>\max\{0,n(\frac{1}{r}-1)\}$ or
$s$ is a non-negative even integer, then
for all $f,g\in \sw$, 
\begin{gather}
\norm{D^s(fg)}{L^r(v^{\frac{r}{p}}w^{\frac{r}{q}})} \lesssim  \norm{D^sf}{L^p(v)} \norm{g}{L^q(w)}+ \norm{f}{L^p(v)} \norm{D^sg}{L^q(w)},
  \label{eq:KPweighted1}\\  
\norm{J^s(fg)}{L^r(v^{\frac{r}{p}}w^{\frac{r}{q}})} \lesssim
\norm{J^sf}{L^p(v)} \norm{g}{L^q(w)}+ \norm{f}{L^p(v)}
\norm{J^sg}{L^q(w)}, \label{eq:KPweighted2}
\end{gather}
and
\begin{multline} 
\norm{D^s(fg)-fD^s(g)}{L^r(v^{\frac{r}{p}}w^{\frac{r}{q}})} \\ 
\lesssim
\norm{D^sf}{L^p(v)} \norm{g}{L^q(w)}+ \norm{\nabla f}{L^p(v)}
\norm{D^{s-1}g}{L^q(w)}, 
  \label{eq:Commweighted1}
\end{multline}
\begin{multline}
\norm{J^s(fg)-fJ^s(g)}{L^r(v^{\frac{r}{p}}w^{\frac{r}{q}})} \\
\lesssim
\norm{J^sf}{L^p(v)} \norm{g}{L^q(w)}+ \norm{\nabla f}{L^p(v)}
\norm{J^{s-1}g}{L^q(w)}. \label{eq:Commweighted2} 
\end{multline}
The implicit constants depend on $p$, $q$, $s$, $[v]_{A_p}$ and $[w]_{A_q}$.
\end{theorem}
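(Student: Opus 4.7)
The plan is to prove all four inequalities by combining a Littlewood--Paley/paraproduct decomposition of $fg$, uniform weighted bounds for the resulting sequences of square-function operators, and a bilinear extrapolation step to reach the range $\frac{1}{2}<r\le 1$. First I would fix a smooth Littlewood--Paley decomposition $\{\psi_k\}$ (homogeneous for $D^s$, inhomogeneous for $J^s$) and decompose $fg$ into three paraproducts according to whether $f$, $g$, or both, carry the dominant frequency. Applying $D^s$ (resp.\ $J^s$) puts the derivative on the high-frequency factor. Standard Fourier-support considerations then reduce matters to bounding bilinear expressions of the form
\[
\Bigl\| \Bigl(\sum_k |T_k(D^s f)|^2\Bigr)^{1/2}\, \M g \Bigr\|_{L^r(v^{r/p} w^{r/q})}
\]
together with its symmetric partner, where the $T_k$ are smooth convolution operators whose kernels are uniformly controlled Calder\'on--Zygmund kernels and $\M$ is a Hardy--Littlewood-type maximal operator.

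The main technical input is uniform weighted estimates of the form $\|(\sum_k |T_k f|^2)^{1/2}\|_{L^p(v)} \lesssim \|f\|_{L^p(v)}$ for $v \in A_p$, $1<p<\infty$, with constants depending only on $[v]_{A_p}$. These follow from weighted Fefferman--Stein vector-valued Littlewood--Paley theory applied to the family $\{T_k\}$. Combining these with Muckenhoupt's theorem for $\M$ on $L^q(w)$ and H\"older's inequality in the product weight $v^{r/p}w^{r/q}$ yields \eqref{eq:KPweighted1} and \eqref{eq:KPweighted2} directly in the range $1<r<\infty$.

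To cover the remaining range $\frac{1}{2}<r\le 1$, where $v^{r/p}w^{r/q}$ need not lie in any $A_t$ class, I would invoke the bilinear extrapolation theorem announced in the abstract. Its hypotheses are precisely the uniform estimates established in the previous step, valid across all admissible $(p,q,v,w)$ with $v\in A_p$ and $w\in A_q$, and its conclusion upgrades the bounds to the full range $\frac{1}{2}<r<\infty$. For the commutator estimates \eqref{eq:Commweighted1} and \eqref{eq:Commweighted2}, the same paraproduct decomposition is used, but one exploits cancellation: in the piece where $f$ carries the lower frequency, the difference $D^s(fg) - fD^s g$ becomes a commutator that, upon Taylor-expanding $|\xi+\eta|^s - |\eta|^s$ in $\xi$ on the relevant frequency support, gains an extra factor of the low frequency of $f$. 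This produces $\nabla f$ paired with $D^{s-1}g$, matching the right-hand side, after which the square-function analysis and extrapolation proceed as in the Leibniz case.

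The hard part, I expect, is producing the square-function bounds in a form sufficiently uniform---both in the scale parameter $k$ and in the dependence on $[v]_{A_p}$---to feed into the bilinear extrapolation machinery without losing control of constants, especially since the $T_k$ arising from different paraproduct pieces have slightly different symbol structures. A secondary delicacy is handling the endpoint where $s$ is a nonnegative even integer (so the $D^s$-symbol is smooth and no restriction $s>n(1/r-1)$ is needed), which is likely to require a direct argument using the algebraic identity for integer powers of the Laplacian rather than frequency-localized Taylor expansions.
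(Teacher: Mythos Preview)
Your outline captures the paraproduct decomposition and the Taylor expansion for the commutator pieces, but it misses the central technical obstacle and misidentifies the role of extrapolation.

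The real difficulty is the diagonal (high--high) piece $T_{3,s}$, where $|\xi|\sim|\eta|$. Here the symbol $\Phi_3(\xi,\eta)\,|\xi+\eta|^s/|\eta|^s$ is \emph{not} a Coifman--Meyer multiplier, because $|\xi+\eta|^s$ is singular at $\xi+\eta=0$ and this zero set meets the support of $\Phi_3$. Your plan to treat every piece via ``smooth convolution operators whose kernels are uniformly controlled Calder\'on--Zygmund kernels'' therefore breaks down here. The paper (following Grafakos--Oh) expands this symbol as a Fourier series, producing a sum $\sum_{m\in\mathbb{Z}^n} c_{s,m}\, T_m(f,g)$ with $|c_{s,m}|\lesssim(1+|m|)^{-(n+s)}$, where each $T_m$ is built from \emph{translated} kernels $\Psi^j_{k,m}(x)=2^{kn}\Psi^j(2^kx+m)$. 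Since weighted $L^p$ is not translation invariant, the obvious pointwise bound $|\Psi_{k,m}\ast f|\lesssim \mathcal{M}f(\cdot-2^{-k}m)$ is useless, and standard Littlewood--Paley theory only yields constants that grow with $m$. The paper's key new ingredient is a square-function estimate that is \emph{uniform in the translation parameter}, proved by replacing the maximal function with averaging operators. The condition $s>n(1/r-1)$ then enters exactly to make $\sum_m (1+|m|)^{-(n+s)r_*}$ converge, where $r_*=\min(r,1)$. None of this is visible in your proposal.

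Your use of extrapolation is also misplaced. H\"older's inequality $\|FG\|_{L^r(v^{r/p}w^{r/q})}\le\|F\|_{L^p(v)}\|G\|_{L^q(w)}$ is valid for all $0<r<\infty$, and the paper establishes the weighted inequalities \eqref{eq:KPweighted1}--\eqref{eq:Commweighted2} directly for the full range $\tfrac{1}{2}<r<\infty$; no extrapolation step occurs here. The bilinear extrapolation theorem in the paper serves a different purpose entirely: it transfers the already-proved weighted bounds to \emph{variable} Lebesgue spaces. Finally, you have the even-integer case backwards: when $s\in 2\mathbb{N}_0$ the diagonal symbol \emph{is} smooth, hence Coifman--Meyer, and the Fourier-series decomposition is unnecessary.
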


The ``factored'' bilinear weights in Theorem~\ref{thm:KPweighted} were
introduced in~\cite{MR1947875} and further studied
in~\cite{MR2030573}.   In~\cite{MR2483720} a more general case of
bilinear $A_p$ weights was introduced.  We do not know if our result
can be proved for these weights.  However, the factored
weights are sufficient to apply extrapolation and prove our second
main result.  For the notation used in the statement of
Theorem~\ref{thm:KPvariable}, we refer the reader to Section~\ref{sec:variable}.

\begin{theorem}\label{thm:KPvariable}
  Let $p(\cdot),q(\cdot),\rr\in \Pp_0$ be exponent functions such that
  $\frac{1}{\rr}=\frac{1}{p(\cdot)}+\frac{1}{q(\cdot)}$.  Suppose
  further that
  there exist $1<p<{p}_{-}$ and $1<q<{q}_{-}$ such that
  $\left(\frac{p(\cdot)}{p}\right)',\,\left(\frac{q(\cdot)}{q}\right)'\in\B.$
  If $s>\max\{0,n(\frac{1}{r}-1)\},$ where
  $\frac{1}{r}=\frac{1}{p}+\frac{1}{q},$ or $s$ is a non-negative even
  integer, then for all $f,g\in \sw$,
\begin{align}
  &\norm{D^s(fg)}{L^{r(\cdot)}} \lesssim  \norm{D^sf}{L^{p(\cdot)}} \norm{g}{L^{q(\cdot)}}+ \norm{f}{L^{p(\cdot)}} \norm{D^sg}{L^{q(\cdot)}},
  \label{eq:KPvariable1}\\
 &\norm{J^s(fg)}{L^{r(\cdot)}} \lesssim  \norm{J^sf}{L^{p(\cdot)}} \norm{g}{L^{q(\cdot)}}+ \norm{f}{L^{p(\cdot)}} \norm{J^sg}{L^{q(\cdot)}}, \label{eq:KPvariable2}\\
 &\norm{D^s(fg)-fD^s(g)}{L^{r(\cdot)}} \lesssim  \norm{D^sf}{L^{p(\cdot)}} \norm{g}{L^{q(\cdot)}}+ \norm{\nabla f}{L^{p(\cdot)}} \norm{D^{s-1}g}{L^{q(\cdot)}},  \label{eq:Commvariable1}\\
 &\norm{J^s(fg)-fJ^s(g)}{L^{r(\cdot)}} \lesssim  \norm{J^sf}{L^{p(\cdot)}} \norm{g}{L^{q(\cdot)}}+ \norm{\nabla f}{L^{p(\cdot)}} \norm{J^{s-1}g}{L^{q(\cdot)}}. \label{eq:Commvariable2}
\end{align}
\end{theorem}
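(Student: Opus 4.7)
The plan is to deduce Theorem~\ref{thm:KPvariable} from the weighted estimates of Theorem~\ref{thm:KPweighted} by means of a bilinear extrapolation theorem in the variable Lebesgue scale, as anticipated by the abstract.

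First, I would fix $p, q$ satisfying $1 < p < p_{-}$ and $1 < q < q_{-}$ as in the hypothesis, and let $r$ be the constant exponent determined by $\frac{1}{r} = \frac{1}{p} + \frac{1}{q}$. For this choice, the smoothness requirement $s > \max\{0, n(\frac{1}{r}-1)\}$ (or $s$ a nonnegative even integer) is exactly what is needed to invoke Theorem~\ref{thm:KPweighted} with exponents $p, q, r$. Consequently, for every pair of weights $v \in A_p$ and $w \in A_q$, the four weighted estimates \eqref{eq:KPweighted1}--\eqref{eq:Commweighted2} are in force, with implicit constants depending only on $[v]_{A_p}$ and $[w]_{A_q}$.

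Next, I would apply the bilinear extrapolation theorem in the following form: if $h, F_1, F_2, G_1, G_2$ are measurable functions for which
\begin{equation*}
\|h\|_{L^{r}(v^{r/p} w^{r/q})} \lesssim \|F_1\|_{L^{p}(v)} \|G_1\|_{L^{q}(w)} + \|F_2\|_{L^{p}(v)} \|G_2\|_{L^{q}(w)}
\end{equation*}
holds uniformly over all $v \in A_p$ and $w \in A_q$, then under the standing hypotheses $(\pp/p)', (\qq/q)' \in \B$, one obtains
\begin{equation*}
\|h\|_{L^{\rr}} \lesssim \|F_1\|_{\Lp} \|G_1\|_{\Lq} + \|F_2\|_{\Lp} \|G_2\|_{\Lq}.
\end{equation*}
Each of the four inequalities in the statement then follows by applying this principle with the natural choice of data; for example, \eqref{eq:KPvariable1} is obtained by taking $h = D^s(fg)$, $F_1 = D^sf$, $G_1 = g$, $F_2 = f$, $G_2 = D^sg$, and the remaining three cases (including the commutator estimates \eqref{eq:Commvariable1}--\eqref{eq:Commvariable2}, where $\nabla f$ and $D^{s-1}g$ or $J^{s-1}g$ play the roles of $F_2$ and $G_2$) have identical structure.

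The main obstacle — and the reason that the conclusions are formulated in terms of scalar Muckenhoupt weights $v$ and $w$ rather than the genuinely bilinear class of \cite{MR2483720} — is establishing the above bilinear extrapolation theorem in the variable Lebesgue setting. The conditions $(\pp/p)', (\qq/q)' \in \B$ are precisely designed so that a Rubio de Francia iteration can be carried out in each factor separately; I would expect the proof to proceed by a two-step argument, first freezing the second factor and iterating in the first to pass from the weighted $L^p(v)$ norm to $\Lp$, and then freezing the first factor and iterating in the second to pass from $L^q(w)$ to $\Lq$, each step producing exactly the $A_p$- or $A_q$-weighted input required by the next. The delicate technical point lies in managing the factored weight $v^{r/p} w^{r/q}$ on the left-hand side throughout this two-fold iteration; once the extrapolation theorem is in hand, the deduction of Theorem~\ref{thm:KPvariable} from Theorem~\ref{thm:KPweighted} is immediate.
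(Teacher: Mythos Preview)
Your overall strategy is correct and is exactly the paper's: deduce Theorem~\ref{thm:KPvariable} from the weighted estimates of Theorem~\ref{thm:KPweighted} via a bilinear extrapolation theorem (the paper's Theorem~\ref{thm:extrapolation}), with the conditions $(\pp/p)',(\qq/q)'\in\B$ precisely enabling the Rubio de Francia construction.

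Two minor points of comparison. First, the paper does not extrapolate the full two-term inequalities of Theorem~\ref{thm:KPweighted}; instead it goes back into that proof and extrapolates each of the single-product bounds \eqref{eq:KPweighted3}--\eqref{eq:KPweighted5} (and their analogues for $\widetilde T_{j,s}$, $Q_{j,s}$, $\widetilde Q_{j,s}$) separately, so that the extrapolation theorem can be stated for triples $(h,f,g)$. Your direct route via a two-term extrapolation principle also works, since the Rubio de Francia argument is insensitive to how many products appear on the right. Second, your proposed mechanism for the extrapolation theorem---a sequential two-step iteration, freezing one factor at a time---is not what the paper does, and it is not clear that it can be made to work cleanly because after the first step the left side still carries the mixed weight $v^{r/p}w^{r/q}$, which does not fit the standard linear extrapolation template. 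The paper instead runs \emph{both} Rubio de Francia algorithms simultaneously: dualize $\|h^r\|_{L^{\bar r(\cdot)}}$, split the dual function $\tau$ as $\tau^{\theta_1(\cdot)}\tau^{\theta_2(\cdot)}$ using the exponent identity $\theta_1+\theta_2=1$ coming from $\frac{1}{r}=\frac{1}{p}+\frac{1}{q}$, and apply $\Rf_1$, $\Rf_2$ to the two pieces to produce the pair of $A_1$ weights at once. This handles the factored weight in a single stroke and is the ``delicate technical point'' you anticipated.
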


\begin{remark}
Implicit in the proof of Theorem~\ref{thm:KPweighted} is the fact that, in the case $w=v,$ it is possible to
  have different pairs $p,\,q$ for each term on the righthand side of \eqref{eq:KPweighted1}, \eqref{eq:KPweighted2}, \eqref{eq:Commweighted1} and
  \eqref{eq:Commweighted2}. Also, the  proof of Theorem~\ref{thm:KPvariable} allows for different pairs of  exponents $p(\cdot),q(\cdot)$ for each  term on the righthand side of the inequalities for variable Lebesgue spaces. 

\end{remark}

\begin{remark}
  As we will explain in Section~\ref{sec:variable}, the hypotheses of
  Theorem~\ref{thm:KPvariable} imply that $\pp$ and $\qq$ are both
  bounded, and this restriction is intrinsic to our proof.  However,
  in the scale of variable Lebesgue spaces it is possible to have
  unbounded exponents or even exponents that are equal to infinity on
  sets of positive measure.  We conjecture that there is a
  version of Theorem~\ref{thm:KPvariable} that allows unbounded
  exponents and that includes
  the endpoint inequality in~\cite{MR3263081} as a special case.
\end{remark}

\medskip

Our proofs of \eqref{eq:KPweighted1} and \eqref{eq:KPweighted2} in
Theorem~\ref{thm:KPweighted} exploit the ideas used in
\cite{MR3200091} to prove their unweighted counterparts \eqref{eq:KP1}
and \eqref{eq:KP2}.  This approach requires us to use weighted
estimates for a certain family of square-functions; moreover we need
to have good control on their norms.  The central step in this
argument is Theorem~\ref{thm:sqweighted} where we obtain uniform
estimates for such operators. We note that
Theorem~\ref{thm:sqweighted} improves the estimate gotten
in~\cite[Corollary 1]{MR3200091}; their proof relies on a weak $(1,1)$
estimate and interpolation.  This is enough for the proof of the
unweighted estimates \eqref{eq:KP1} and \eqref{eq:KP2}, but not for
their weighted analogs of Theorem~\ref{thm:KPweighted}.  We instead
prove the necessary weighted strong type estimates directly.  A novel
feature of our approach is that we avoid using the maximal operator to
estimate convolution operators and instead use an argument based on
averaging operators that was developed in a different context
in~\cite{CUMR}.  We do not know if the weak type estimates in weighted
or variable Lebesgue spaces that correspond to the weak-type estimates
proved in~\cite{MR3200091} hold.

The proof
of the weighted commutator estimates \eqref{eq:Commweighted1} and
\eqref{eq:Commweighted2} in Theorem~\ref{thm:KPweighted} relies on a
decomposition of the operators $D^s(fg)$ and $J^s(fg)$ given in
\cite{MR951744} for the case of $J^s$ as well as weighted estimates for
certain bilinear operators from~\cite{MR2030573,MR1947875}.
  
The variable Lebesgue space estimates in 
Theorem~\ref{thm:KPvariable} are a consequence of
Theorem~\ref{thm:KPweighted} and a bilinear extrapolation theorem
(Theorem~\ref{thm:extrapolation}) that allows us to obtain bilinear
estimates in variable Lebesgue spaces from bilinear estimates in
weighted Lebesgue spaces.   This result generalizes both
the extrapolation theorem for variable Lebesgue spaces
in~\cite{MR2210118} (see also \cite{MR3026953}) and the bilinear
extrapolation theorem in~\cite{MR2030573}.   Our result is interesting
in its own right as it lets us easily prove a number of other bilinear
estimates in the variable Lebesgue space setting.  To illustrate this, 
we use 
Theorem~\ref{thm:extrapolation} to prove estimates in variable Lebesgue spaces  for
several types of bilinear
operators, including bilinear Calder\'on-Zygmund operators, bilinear
multiplier operators with symbols that have limited smoothness, and
certain rough bilinear singular integrals.
 
 \medskip
 
The remainder of this paper is organized as follows.  In
Section~\ref{sec:weights} we present  definitions and basic results
about weights and weighted norm inequalities, and then state and prove
 Theorem~\ref{thm:sqweighted}.    In Section~\ref{sec:variable} we
 give the necessary definitions and background information on variable
 Lebesgue spaces, 
 and prove Theorem~\ref{thm:extrapolation}.  In
 Section~\ref{sec:applextrap} we state and prove the applications of
 Theorem~\ref{thm:extrapolation} to other bilinear operators. 
 In Section~\ref{sec:proofmain} we prove
 Theorems~\ref{thm:KPweighted} and \ref{thm:KPvariable}. Finally, in Section~\ref{sec:LM} we present  Kato-Ponce inequalites in the settings of weighted Lorentz spaces and Morrey spaces and explain how they follow from tools developed for the proof of Theorem~\ref{thm:KPweighted}.

Throughout this paper, all notation will be standard or will be defined
as needed.  We will let $\ent$ be
the set of integers, $\na$ the set of natural numbers, and $\na_0=\na\cup\{0\}$.   Unless explicitly indicated
 otherwise, all function spaces that appear will consist of 
 complex-valued functions defined on $\rn$.

{\bf Acknowledgements:}  The authors thank the referee for the careful reading of the manuscript. 

 \section{Weighted square function estimates}\label{sec:weights}
 
 The main result in this section is Theorem~\ref{thm:sqweighted},
 which gives a uniform weighted estimate for certain families of
 square-functions.  This estimate is in turn used in the proofs of our
 main results.  We divide this section into three parts.  In the first
 we give some definitions and state some known results.  In the second
 we state and prove Theorem~\ref{thm:sqweighted}.  The proof appears
 relatively short, but it depends on several technical propositions
 which we prove in the final part.

\subsection*{Preliminary results} 
By a weight $w$ we mean a non-negative, locally integrable
function defined on $\rn$.  Given $0<p<\infty,$ let $L^p(w)$ denote the class
of complex-valued, measurable functions
$f$ defined on $\rn$ such that
\[
\norm{f}{L^p(w)}=\left(\int_{\rn}\abs{f(x)}^pw(x)\,dx\right)^{\frac{1}{p}}<\infty. \] 
For brevity, when $w=1$ we write $L^p$ and $\norm{f}{L^p}$ instead of $L^p(w)$ and
$\norm{f}{L^{p}(w)}$.
 
Given a locally integrable function $f$ on $\rn$ and a set
$E\subset \rn$ of positive measure, define
\[ \avgint_E f \,dy=\frac{1}{|E|}\int_E f(y)\,dy. \]
The Hardy-Littlewood maximal operator $\mathcal{M}$ is defined as
follows: for each $x\in \rn$, let
 \[
 \mathcal{M}(f)(x)=\sup_{B} \avgint_B\abs{f(y)}\,dy \, \chi_B(x), 
 \]
 where the supremum is taken over all Euclidean balls in $\rn$.  
 
Given $1<p<\infty,$  the Muckenhoupt class $A_p$  consists of all weights $w$ such that
 \[
 [w]_p=\sup_{Q}\left(\avgint_Q w(y)\,dy\right) \left(\avgint_Q w^{-\frac{1}{p-1}}\right)^{p-1}<\infty,
 \]
 where the supremum is taken over all cubes $Q\subset \rn.$  The class
 $A_1$ consists of all weights  $w$ such that $ \mathcal{M}(w)(x)\lesssim w(x)$ for almost every $x\in\rn;$ we define 
 \[
 [w]_{A_1}=\sup_{Q}\left(\avgint_Q w(y)\,dy\right)\norm{w^{-1}}{L^\infty(Q)},
 \]
 where the supremum is taken over all cubes $Q\subset \rn.$   Note
 that $A_1\subset A_p$ for $1<p<\infty$ and $[w]_{A_p}\leq
 [w]_{A_1}$.  
 
 It is well known that for $1<p<\infty$, the Muckenhoupt condition
 characterizes the weights $w$ such that $\mathcal{M}$ is bounded on
 $L^p(w)$ (e.g., see~\cite{MR1800316}).   Below, we will need the vector-valued version of this result,
 also referred to as the weighted Fefferman-Stein inequality.  For a
 proof, see~\cite{MR604351,MR2797562}.

\begin{lem} \label{lemma:fefferman-stein}
 Given $1<p,q<\infty$ and $w\in A_p$, for all sequences $\{f_k\}$ of locally
 integrable functions defined on $\rn$,
 \begin{equation}\label{eq:FSweighted}
\bigg\|\bigg(\sum_{k\in\ent}\abs{\mathcal{M}(f_k)}^q\bigg)^{\frac{1}{q}}\bigg\|_{L^p(w)}
\lesssim  
\bigg\|\bigg(\sum_{k\in\ent}\abs{f_k}^q\bigg)^{\frac{1}{q}} \bigg\|_{L^p(w)},
\end{equation}
where the implicit constant depends on $p$, $q$, and $[w]_{A_p}$.
\end{lem}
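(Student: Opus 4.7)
The plan is to reduce the vector-valued inequality to the scalar weighted Muckenhoupt theorem via Rubio de Francia extrapolation applied in the $\ell^q$-valued setting. The key observation is that the diagonal case $p=q$ is essentially free: by Fubini and the scalar boundedness of $\mathcal{M}$ on $L^q(w)$ for $w\in A_q$, one has
\begin{equation*}
\int_{\rn}\sum_{k\in\ent}|\mathcal{M}(f_k)(x)|^q\,w(x)\,dx=\sum_{k\in\ent}\|\mathcal{M}(f_k)\|_{L^q(w)}^q\lesssim\sum_{k\in\ent}\|f_k\|_{L^q(w)}^q,
\end{equation*}
with constant depending only on $q$ and $[w]_{A_q}$. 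This already establishes \eqref{eq:FSweighted} in the case $p=q$ for every $w\in A_q$.

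To pass from the diagonal to the full range, I would set
\begin{equation*}
F(x)=\Big(\sum_{k\in\ent}|\mathcal{M}(f_k)(x)|^q\Big)^{1/q},\qquad G(x)=\Big(\sum_{k\in\ent}|f_k(x)|^q\Big)^{1/q},
\end{equation*}
and interpret the diagonal estimate as saying $\|F\|_{L^q(w)}\lesssim\|G\|_{L^q(w)}$ for every $w\in A_q$, with a constant depending on $w$ only through $[w]_{A_q}$. Rubio de Francia's extrapolation theorem, in its formulation for pairs $(F,G)$ of nonnegative measurable functions, then automatically upgrades this to $\|F\|_{L^p(w)}\lesssim\|G\|_{L^p(w)}$ for every $1<p<\infty$ and every $w\in A_p$, with quantitative control of the constant in terms of $p$, $q$, and $[w]_{A_p}$. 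Unpacking the definitions of $F$ and $G$ gives exactly \eqref{eq:FSweighted}.

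The elegance of this approach is that no structural information about the mapping $\{f_k\}\mapsto\{\mathcal{M}(f_k)\}$ is needed beyond the diagonal estimate: the extrapolation machinery treats $(F,G)$ as an abstract pair of nonnegative functions and is oblivious to the fact that both arise from a common sequence. The only technical point to verify is that the constant in the diagonal estimate depends on $w$ solely through $[w]_{A_q}$ (so that extrapolation applies), and this is a standard feature of the scalar Muckenhoupt theorem. Consequently, the main obstacle is essentially bookkeeping: confirming the quantitative dependence of constants and citing the correct form of extrapolation. The result itself is classical and well documented in the cited references.
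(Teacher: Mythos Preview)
Your argument is correct: the diagonal case $p=q$ follows from Fubini and the scalar weighted bound for $\mathcal{M}$, and Rubio de Francia extrapolation on the family of pairs $(F,G)$ transfers this to all $1<p<\infty$ with the stated dependence of the constant. The paper does not supply a proof of this lemma; it simply cites \cite{MR604351,MR2797562}. Your approach is precisely the one taken in the second of these references (the extrapolation monograph), so in that sense you have reproduced the intended argument. The first reference, by contrast, proceeds more directly via vector-valued Calder\'on--Zygmund/interpolation techniques rather than extrapolation; that route requires more structural input but also yields the weak endpoint $p=1$, which extrapolation does not.
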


The next result we recall is a weighted Littlewood-Paley estimate.  To
state it, let $\widehat{f}$ denote the Fourier transform of a tempered
distribution on $\rn$:  more precisely, for all $\xi\in\rn$ and
$f\in\sw$, 
\[
\widehat{f}(\xi)=\int_{\rn}f(x)\,e^{-2\pi i x\cdot \xi}\,dx.
\] 

\begin{lem} \label{lemma:square-estimate} Let $\varphi\in\sw$ be
  such that
  $\supp(\widehat{\varphi}) \subset \{\xi\in\rn: c_1<\abs{\xi}<c_2\}$
  for some $0<c_1<c_2<\infty$.  Set 
  $\varphi_k(x)=2^{kn}\varphi(2^k x)$  and suppose further that
for some  constant $c_\varphi>0$ and for all $\xi\neq 0$,
\[ \sum_{k\in\ent}\abs{\widehat{\varphi}(2^{-k}\xi)}^2=c_\varphi. \]
Then for every $1<p<\infty$, $w\in A_p$, and $f\in L^p(w)$,
\begin{equation}\label{eq:squareestimate}
\bigg\|\bigg(\sum_{k\in\ent}\abs{\varphi_k\ast
  f}^2\bigg)^{\frac{1}{2}}\bigg\|_{L^p(w)}\sim \norm{f}{L^p(w)}, 
\end{equation}
where the implicit constants depend on $\varphi$ and
$[w]_{A_p}$ but do not depend on $f$. 
\end{lem}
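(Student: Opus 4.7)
The plan is to prove the two inequalities of \eqref{eq:squareestimate} separately. For the direct inequality $\bigl\|\bigl(\sum_k|\varphi_k\ast f|^2\bigr)^{1/2}\bigr\|_{L^p(w)}\lesssim \|f\|_{L^p(w)}$, I would interpret $Tf:=(\varphi_k\ast f)_{k\in\ent}$ as a vector-valued Calder\'on--Zygmund operator with values in $\ell^2(\ent)$. The unweighted $L^2$ boundedness is immediate from Plancherel and the hypothesis $\sum_k|\widehat\varphi(2^{-k}\xi)|^2=c_\varphi$:
\[
\sum_{k\in\ent}\|\varphi_k\ast f\|_{L^2}^2 = \int_{\rn}\sum_{k\in\ent}|\widehat\varphi(2^{-k}\xi)|^2\,|\widehat f(\xi)|^2\,d\xi = c_\varphi\|f\|_{L^2}^2.
\]
The vector-valued kernel $K(x)=(\varphi_k(x))_{k\in\ent}$ satisfies the size and smoothness estimates $\|K(x)\|_{\ell^2}\lesssim |x|^{-n}$ and $\|\nabla K(x)\|_{\ell^2}\lesssim |x|^{-n-1}$; both follow by splitting the sum at the scale $k_0$ determined by $2^{k_0}|x|\sim 1$, bounding the terms with $k\le k_0$ by the uniform bounds on $\varphi$ and $\nabla\varphi$ and the terms with $k>k_0$ using the Schwartz decay of $\varphi$. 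Standard weighted vector-valued Calder\'on--Zygmund theory then delivers the bound from $L^p(w)$ to $L^p(w;\ell^2)$ for $1<p<\infty$ and $w\in A_p$, with constants depending only on $\varphi$ and $[w]_{A_p}$.

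For the reverse inequality I would combine duality with a Calder\'on reproducing formula. Setting $\tilde\varphi(x):=\overline{\varphi(-x)}$ one has $\widehat{\tilde\varphi}=\overline{\widehat\varphi}$, so the hypothesis yields
\[
c_\varphi f=\sum_{k\in\ent}\varphi_k\ast\tilde\varphi_k\ast f\qquad\text{in }\swp,
\]
and $\tilde\varphi$ again satisfies the hypotheses of the lemma. Since $w\in A_p$ is equivalent to $\sigma:=w^{1-p'}\in A_{p'}$, the norm $\|f\|_{L^p(w)}$ can be recovered by testing against $h\in L^{p'}(\sigma)$ with $\|h\|_{L^{p'}(\sigma)}\le 1$ via the Lebesgue pairing. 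Applying the reproducing formula, pointwise Cauchy--Schwarz in $k$, H\"older's inequality with exponents $(p,p')$ and weights $(w,\sigma)$, and finally the already-proved direct inequality to $h$ with weight $\sigma\in A_{p'}$, I get
\[
\Bigl|\int f\,\overline h\,dx\Bigr|\lesssim \bigg\|\Bigl(\sum_{k\in\ent}|\varphi_k\ast f|^2\Bigr)^{1/2}\bigg\|_{L^p(w)}\bigg\|\Bigl(\sum_{k\in\ent}|\tilde\varphi_k\ast h|^2\Bigr)^{1/2}\bigg\|_{L^{p'}(\sigma)}\lesssim \bigg\|\Bigl(\sum_{k\in\ent}|\varphi_k\ast f|^2\Bigr)^{1/2}\bigg\|_{L^p(w)},
\]
and taking the supremum over $h$ completes the proof.

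The main technical obstacle is the verification of the vector-valued kernel bounds uniformly across scales and the careful interpretation of the reproducing formula (convergence in $\swp$ modulo polynomials is what is actually needed, since elements of $L^p(w)$ embed into tempered distributions). A viable alternative to the vector-valued formalism in the direct inequality is a Rademacher randomization combined with Khintchine's inequality, which reduces matters to uniform scalar weighted Calder\'on--Zygmund estimates for the convolutions with $\sum_k r_k(t)\varphi_k$, $t\in[0,1]$; this trades the vector-valued theory for uniform control of the randomized kernels.
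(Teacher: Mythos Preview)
The paper does not actually prove this lemma; immediately after the statement it simply writes ``For a proof, see \cite[Proposition 1.9]{MR1821243} and the comment that follows; see also \cite{MR561835}.'' So there is no in-paper argument to compare against, and your proposal supplies more detail than the paper does.

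Your approach is correct and standard. The direct inequality via vector-valued Calder\'on--Zygmund theory is exactly the line taken in Kurtz \cite{MR561835}; your sketch of the $\ell^2$-kernel estimates by splitting at the scale $2^{k_0}|x|\sim 1$ is the right computation. The reverse inequality via the reproducing formula $c_\varphi f=\sum_k\varphi_k\ast\tilde\varphi_k\ast f$ and duality against $L^{p'}(w^{1-p'})$ is also standard; note that $[w^{1-p'}]_{A_{p'}}=[w]_{A_p}^{p'-1}$, so the constant you get on the dual side is indeed controlled by $[w]_{A_p}$ alone, as claimed in the lemma. The technical caveat you flag---that the reproducing formula holds a priori only in $\swp$ modulo polynomials---is genuine but easily handled: since $\widehat\varphi$ is supported in an annulus, the square function annihilates polynomials, and the pairing $\int f\overline{h}\,dx$ is absolutely convergent for $f\in L^p(w)$, $h\in L^{p'}(w^{1-p'})$, so one can work first with $f\in C_c^\infty$ and pass to the limit. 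Your proposed alternative via Rademacher randomization and Khintchine's inequality is also viable and is in fact closer in spirit to the argument the paper uses later in the proof of Theorem~\ref{thm:sqweighted}.
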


For a proof, see \cite[Proposition 1.9]{MR1821243} and the comment that follows; see also \cite{MR561835}.
 
\subsection*{Statement and proof of Theorem~\ref{thm:sqweighted}}
In Lemma~\ref{lemma:square-estimate} the implicit constant in
\eqref{eq:squareestimate} depends on the function $\varphi$; in
particular, if we replace $\varphi$ by a translation
$\varphi(\cdot+z)$, $z\in\rn$, then we do not know {\em a priori}
whether the constants will depend on $z$.    Our main result in this
section shows that in this case they do not.
 
\begin{theorem}\label{thm:sqweighted} 
  Let $\Psi\in \sw$ be such that $\supp(\widehat{\Psi}) \subset \{\xi\in\rn: c_1<\abs{\xi}<c_2\}$
  for some $0<c_1<c_2<\infty$.   Given a sequence
  $\bar{z}=\{z_{k,m}\}_{k\in\ent,m\in\ent^n}\subset \rn,$  define
  $\Psi^{\bar{z}}_{k,m}(x)=2^{kn}\Psi(2^{k}(x+z_{k,m}))$ for
  $x\in\rn,$ $m\in\ent^n$ and $k\in\ent.$   Then for every
  $1<p<\infty$,   $w\in A_p,$ and $f\in L^p(w)$, 
\[
\bigg\|\bigg(\sum_{k\in\ent }\abs{\Psi^{\bar{z}}_{k,m}\ast
  f}^2\bigg)^{\frac{1}{2}}\bigg\|_{L^{p}(w)}\lesssim \norm{f}{L^p(w)},  
\]
where the implicit constants depend on $\Psi$ and $[w]_{A_p}$ but are
independent of $m$ and $\bar{z}.$
\end{theorem}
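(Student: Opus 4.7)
The plan is to reduce the weighted square-function estimate to the weighted Littlewood--Paley estimate (Lemma~\ref{lemma:square-estimate}) via a pointwise control by averaging operators. I would first select an auxiliary $\Phi\in\sw$ with $\widehat{\Phi}$ supported in a slightly larger annulus than $\supp\widehat{\Psi}$ and $\widehat{\Phi}\equiv 1$ on $\supp\widehat{\Psi}$, so that $\Psi=\Psi\ast\Phi$ and hence
\[
\Psi^{\bar z}_{k,m}\ast f=\Psi^{\bar z}_{k,m}\ast h_k,\qquad h_k:=\Phi_k\ast f.
\]
Lemma~\ref{lemma:square-estimate} applied to $\Phi$ then gives $\bigl\|\bigl(\sum_{k\in\ent}\abs{h_k}^2\bigr)^{1/2}\bigr\|_{L^p(w)}\lesssim \norm{f}{L^p(w)}$, so the task reduces to dominating the square function of $\Psi^{\bar z}_{k,m}\ast h_k$ by that of $h_k$.

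The heart of the argument is a pointwise estimate of the form
\[
\abs{\Psi^{\bar z}_{k,m}\ast h_k(x)}\lesssim\sum_{j\geq 0}2^{-jN}\mathcal{A}^{k,j}(h_k)(x),
\]
where $\mathcal{A}^{k,j}$ is an averaging operator on cubes of side $2^{-k+j}$ \emph{centered at} $x$, with constants independent of $z_{k,m}$. Such a decomposition follows from the Schwartz decay of $\Psi$, which yields a dyadic splitting of the convolution kernel according to the size of $2^k\abs{x+z_{k,m}-y}$, in the spirit of \cite{MR3200091}. The novelty here, inspired by \cite{CUMR}, is that one avoids passing through $\mathcal{M}(h_k)(x+z_{k,m})$ (which would introduce a $z_{k,m}$-dependent factor when translated back to $x$); instead, the annular Fourier localization of $h_k$ together with Plancherel--Polya type inequalities converts each shifted average into an average centered at $x$, with the loss from the shift absorbed into the $2^{-jN}$ decay for $N$ taken sufficiently large.

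Squaring the pointwise bound, summing in $k$, applying Minkowski's inequality in $\ell^2(\ent)$ to the outer sum over $j$, and invoking the vector-valued Fefferman--Stein inequality (Lemma~\ref{lemma:fefferman-stein}) for each $j$ (using $\mathcal{A}^{k,j}(h_k)\le\mathcal{M}(h_k)$) yield
\[
\bigg\|\bigg(\sum_{k\in\ent}\abs{\Psi^{\bar z}_{k,m}\ast f}^2\bigg)^{1/2}\bigg\|_{L^p(w)}\lesssim \sum_{j\geq 0}2^{-jN}\bigg\|\bigg(\sum_{k\in\ent}\abs{h_k}^2\bigg)^{1/2}\bigg\|_{L^p(w)}\lesssim\norm{f}{L^p(w)},
\]
the geometric series in $j$ providing a finite total constant depending only on $\Psi$ and $[w]_{A_p}$. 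The main obstacle is the shift-independent pointwise bound: the naive Schwartz-decay estimate controls $\abs{\Psi^{\bar z}_{k,m}\ast h_k(x)}$ by $\mathcal{M}(h_k)$ evaluated at the translated point $x+z_{k,m}$, producing losses of order $(1+2^k\abs{z_{k,m}})^N$ after enlarging cubes, and the averaging-operator framework of \cite{CUMR}---exploiting the frequency localization of $h_k$---is precisely what permits these losses to be absorbed uniformly in $\bar z$.
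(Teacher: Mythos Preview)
Your proposal has a genuine gap: the pointwise bound
\[
\abs{\Psi^{\bar z}_{k,m}\ast h_k(x)}\lesssim\sum_{j\geq 0}2^{-jN}\mathcal{A}^{k,j}(h_k)(x),
\]
with $\mathcal{A}^{k,j}$ an average over a cube \emph{centered at $x$}, cannot hold uniformly in $z_{k,m}$.  To see this, take $k=0$, let $z=z_{0,m}$ be large, and choose $h_0(y)=e^{2\pi i\xi_0\cdot y}\phi(y-x-z)$ with $\xi_0\in\supp\widehat\Psi$ and $\phi$ a fixed bump; then $h_0$ has Fourier support in the correct annulus, $\Psi^{\bar z}_{0,m}\ast h_0(x)$ is of order one (the kernel is centered exactly where $h_0$ lives), while every centered average $\avgint_{Q(x,2^{j})}\abs{h_0}$ is at most $C\,2^{-jn}$ for $2^{j}\gtrsim\abs{z}$ and essentially $0$ otherwise, so the right side is $\lesssim\abs{z}^{-(N+n)}$.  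No Plancherel--Polya/Peetre inequality rescues this: those inequalities give $\abs{h_k(y)}\lesssim(1+2^{k}\abs{x-y})^{a}\,h_k^{**}(x)$, and inserting this into the convolution produces exactly the loss $(1+2^{k}\abs{z_{k,m}})^{a}$ you are trying to avoid.  The averaging-operator framework of \cite{CUMR} (as used in the paper, Lemmas~\ref{lem:averageballs}--\ref{lem:radialdecr} and Proposition~\ref{lem:unifestimate}) yields \emph{$L^p(w)$} bounds for translated convolutions that are uniform in the translation, not pointwise recentering; it cannot supply the inequality you assert.

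The paper circumvents the vector-valued issue altogether.  By Rubio de Francia extrapolation it suffices to work at $p=2$, where $\|(\sum_k|g_k|^2)^{1/2}\|_{L^2(w)}^2=\sum_k\|g_k\|_{L^2(w)}^2$, so the \emph{scalar} uniform estimate $\|\Psi^{\bar z}_{k,m}\ast g\|_{L^2(w)}\lesssim\|g\|_{L^2(w)}$ (Proposition~\ref{lem:unifestimate}, via the averaging lemmas) can be applied coordinatewise to $g=\varphi_k\ast f$.  Concretely, one linearizes the square function by Rademacher randomization, writes $\varepsilon_k\Psi^{\bar z}_{k,m}\ast f=\varphi_k\ast(\varepsilon_k\Psi^{\bar z}_{k,m}\ast(\varphi_k\ast f))$ with $\widehat\varphi\equiv1$ on $\supp\widehat\Psi$, uses Proposition~\ref{lem:squareestimate} to pass from the sum to the $\ell^2$-norm, then Proposition~\ref{lem:unifestimate} and Lemma~\ref{lemma:square-estimate}.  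Your instinct to introduce $\Phi_k\ast f$ is correct, but the translation-uniform control must be obtained at the level of $L^p(w)$ norms, not pointwise, and the reduction to $p=2$ is what makes the scalar estimate sufficient.
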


\smallskip

\begin{remark}
  As we noted in the Introduction, Theorem~\ref{thm:sqweighted}
  improves the corresponding unweighted estimate from \cite[Corollary
  1]{MR3200091}.  There, the authors showed that for the particular
  sequence $z_{k,m}=2^{-k}m$, $k\in\ent$ and $m\in\ent^n,$ 
we have that
\[
\bigg\|\bigg(\sum_{k\in\ent }\abs{\Psi^{\bar{z}}_{k,m}\ast
  f}^2\bigg)^{\frac{1}{2}}\bigg\|_{L^{p}}\lesssim
\log(1+\abs{m})\norm{f}{L^p}.
\]
They proved this inequality by means of an unweighted, weak $(1,1)$
inequality and then interpolating with the unweighted $L^2$ estimate.
They prove the weak $(1,1)$ estimate by showing that the operator
satisfies a vector-valued H\"ormander condition; such a condition is
not sufficient for proving weighted norm inequalities:
see~\cite{MR2098100}.     Our proof of Theorem~\ref{thm:sqweighted}
makes use of Rubio de Francia extrapolation, and so does not yield a
weighted endpoint estimate.  We do not know if such an estimate holds,
either with a constant independent of $m$ or with a constant of order
$\log(1+|m|)$. 
\end{remark}

\begin{remark} We refer the reader to Section~\ref{sec:LM} for versions of Theorem~\ref{thm:sqweighted} in the settings of weighted Lorentz spaces and Morrey spaces.
\end{remark}

\medskip

The proof of Theorem~\ref{thm:sqweighted} requires two propositions
which we state here; their proofs are given in the final part of this
section.

\begin{proposition}\label{lem:squareestimate}
  Let $\varphi\in\sw$ be such that $\supp(\widehat{\varphi})\subset\{\xi\in\rn: c_1<\abs{\xi}<c_2\}$ for some $0<c_1<c_2<\infty$ and
  $\sum_{k\in\ent}\abs{\widehat{\varphi}(2^{-k}\xi)}^2=c_\varphi$ for
  some $c_\varphi>0$ and for all $\xi\neq 0$.  Set
  $\varphi_k(x)=2^{kn}\varphi(2^kx)$ for $k\in\ent.$ If $1<p<\infty$
  and $w\in A_p,$ then
 \begin{equation*}
\bigg\|\sum_{k\in\ent} \varphi_k\ast f_k\bigg\|_{L^p(w)}\lesssim  \bigg\|\bigg(\sum_{k\in\ent}\abs{f_k}^2\bigg)^{\frac{1}{2}}\bigg\|_{L^p(w)}
\end{equation*}
for all sequences $\{f_k\}_{k\in\ent}\subset L^p(w)$ such that the
righthand side is finite.  The implicit constant depends only on
$\varphi$ and $[w]_{A_p}$. 
\end{proposition}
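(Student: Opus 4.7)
The plan is to prove Proposition~\ref{lem:squareestimate} by dualizing against $L^{p'}(w^{1-p'})$ and then reducing to the square-function upper bound already established in Lemma~\ref{lemma:square-estimate}. Since $w \in A_p$, the dual weight satisfies $w^{1-p'} \in A_{p'}$, and standard weighted duality gives
\[
\left\|\sum_{k \in \ent} \varphi_k \ast f_k\right\|_{L^p(w)}
= \sup_{g} \left|\int_{\rn} \sum_{k \in \ent} (\varphi_k \ast f_k)(x)\,g(x)\,dx\right|,
\]
where the supremum is taken over $g$ with $\norm{g}{L^{p'}(w^{1-p'})} \leq 1$. The assumption that the right-hand side of the proposition is finite justifies interchanging sum and integral, and shifting each $\varphi_k$ to the other factor yields $\sum_{k} \int f_k\, (\tf_k \ast g)$, where $\tf_k(x) = \varphi_k(-x) = 2^{kn}\tf(2^k x)$ with $\tf(x) = \varphi(-x)$.

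Next I would apply Cauchy--Schwarz pointwise in the index $k$ and then weighted H\"older's inequality with conjugate exponents $p, p'$ and weight pair $(w, w^{1-p'})$, obtaining the bound
\[
\left|\sum_{k \in \ent}\int f_k (\tf_k \ast g)\right|
\leq
\left\|\bigg(\sum_{k \in \ent} |f_k|^2\bigg)^{1/2}\right\|_{L^p(w)}
\cdot
\left\|\bigg(\sum_{k \in \ent} |\tf_k \ast g|^2\bigg)^{1/2}\right\|_{L^{p'}(w^{1-p'})}.
\]
The first factor on the right is exactly the quantity appearing in the conclusion of the proposition, so the whole argument reduces to controlling the second factor by $\norm{g}{L^{p'}(w^{1-p'})}$.

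The key observation is that $\tf$ satisfies the hypotheses of Lemma~\ref{lemma:square-estimate}: its Fourier transform $\widehat{\tf}(\xi)=\widehat{\varphi}(-\xi)$ is supported in the same annulus $\{c_1<|\xi|<c_2\}$, and the normalization $\sum_k |\widehat{\tf}(2^{-k}\xi)|^2 = c_\varphi$ follows from the corresponding identity for $\varphi$ via the change of variable $\xi \mapsto -\xi$. Hence Lemma~\ref{lemma:square-estimate} applied with $\tf$ in place of $\varphi$, with exponent $p'$ and weight $w^{1-p'}\in A_{p'}$, yields
\[
\left\|\bigg(\sum_{k \in \ent} |\tf_k \ast g|^2\bigg)^{1/2}\right\|_{L^{p'}(w^{1-p'})} \lesssim \norm{g}{L^{p'}(w^{1-p'})},
\]
with implicit constant depending only on $\varphi$ and $[w^{1-p'}]_{A_{p'}}$, which is in turn controlled by $[w]_{A_p}$. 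Taking the supremum over admissible $g$ completes the proof. No single step presents a serious obstacle; the only care required is in tracking the duality pairing for weighted $L^p$ spaces and in checking that passing from $\varphi$ to $\tf$ preserves both the spectral support and normalization hypotheses of Lemma~\ref{lemma:square-estimate}.
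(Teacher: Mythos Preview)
Your argument is correct and is a genuinely different route from the paper's. The paper proves the estimate directly on the primal side: it applies the Littlewood--Paley equivalence of Lemma~\ref{lemma:square-estimate} to the partial sums $F_N=\sum_{|k|\le N}\varphi_k\ast f_k$, uses the support structure of $\widehat{\varphi}$ to reduce $\varphi_k\ast F_N$ to at most three terms $\varphi_k\ast\varphi_{k-\ell}\ast f_{k-\ell}$, bounds each by $\mathcal M(\mathcal M f_{k-\ell})$, and then invokes the weighted Fefferman--Stein inequality (Lemma~\ref{lemma:fefferman-stein}) to close. Your duality argument is shorter and more conceptual: it transfers the problem to a single square-function bound on $g\in L^{p'}(w^{1-p'})$ and uses only the upper bound in Lemma~\ref{lemma:square-estimate}, bypassing both the finite-overlap bookkeeping and the Fefferman--Stein inequality entirely. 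The one point to tighten is the sentence about ``interchanging sum and integral'': as written, convergence of $\sum_k\varphi_k\ast f_k$ in $L^p(w)$ is part of what must be proved. The clean fix is to run your duality/Cauchy--Schwarz argument on finite partial sums, which yields $\big\|\sum_{N_1<|k|\le N_2}\varphi_k\ast f_k\big\|_{L^p(w)}\lesssim \big\|(\sum_{N_1<|k|\le N_2}|f_k|^2)^{1/2}\big\|_{L^p(w)}$; the right side tends to zero by dominated convergence, giving the Cauchy property, and the same bound on the full partial sum passes to the limit.
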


\medskip

\begin{proposition}\label{lem:unifestimate} 
Given $\Psi\in\sw$ and a  sequence $\bar{z}=\{z_{k,m}\}_{k\in\ent,m\in\ent^n}\subset \rn,$  define $\Psi^{\bar{z}}_{k,m}(x)=2^{kn}\Psi(2^{k}(x+z_{k,m}))$ for $x\in\rn,$ $m\in\ent^n$ and $k\in\ent.$
If $1<p<\infty$ and $w\in A_p$, then for all $f\in L^p(w)$,
\begin{equation*}
\sup_{k,m\in\ent}\norm{\Psi^{\bar{z}}_{k,m}\ast f}{L^p(w)}\lesssim [w]^{\frac{1}{p}}_{A_p} \norm{f}{L^p(w)}, 
\end{equation*}
where the implicit constant depends on $\Psi$ and is independent of  $\bar{z}$ and $w$.
\end{proposition}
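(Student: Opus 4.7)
The plan is to reduce the estimate to the weighted $L^p$ boundedness of the Hardy--Littlewood maximal operator through a pointwise domination, using the Schwartz decay of $\Psi$ and the translation invariance of the Muckenhoupt class.

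My first move is to rewrite the convolution kernel as a pure translate of the dilated kernel $\Psi_k(x):=2^{kn}\Psi(2^kx)$:
\[
\Psi^{\bar z}_{k,m}(x)=\Psi_k(x+z_{k,m}),\qquad (\Psi^{\bar z}_{k,m}*f)(x)=(\Psi_k*f)(x+z_{k,m}).
\]
Then, exploiting $|\Psi(u)|\le C_N(1+|u|)^{-N}$ for arbitrarily large $N$, a standard dyadic-annulus decomposition of the integral $\int |\Psi^{\bar z}_{k,m}(x-y)||f(y)|\,dy$ around the point $x+z_{k,m}$ yields the pointwise bound
\[
|\Psi^{\bar z}_{k,m}*f(x)|\;\lesssim\;\mathcal M f(x+z_{k,m}),
\]
with an implicit constant depending only on finitely many Schwartz seminorms of $\Psi$ and on $n$; in particular, it is independent of $k$, $m$, and $\bar z$.

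Next I would take the $L^p(w)$ norm of both sides, change variables $u=x+z_{k,m}$ to convert the right-hand side into an $L^p$ integral of $\mathcal M f$ against the translated weight $\tilde w(\cdot):=w(\cdot-z_{k,m})$, invoke the translation invariance of the Muckenhoupt class (so that $[\tilde w]_{A_p}=[w]_{A_p}$), and finally apply the weighted $L^p$ boundedness of $\mathcal M$ (the scalar case of Lemma~\ref{lemma:fefferman-stein}) to $\tilde w$. Reversing the change of variables then completes the estimate with the right-hand side $\|f\|_{L^p(w)}$.

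The main obstacle is producing the specific exponent $1/p$ on $[w]_{A_p}$ rather than Buckley's sharp exponent $1/(p-1)$ that falls out of the standard proof of the weighted maximal inequality. To extract $1/p$ I would interpolate by Marcinkiewicz between the weak $(1,1)$ estimate for $\mathcal M$ on $L^1(w)$ with $w\in A_1$ (whose constant is linear in $[w]_{A_1}$) and the trivial $L^\infty$ bound, and then pass to general $A_p$ weights via the Jones factorization $A_p=A_1\cdot A_1^{1-p}$. The resulting $[w]_{A_p}^{1/p}$ dependence is precisely what the bilinear extrapolation arguments in the subsequent sections require; tracking this exponent carefully is the only delicate point in an otherwise routine argument.
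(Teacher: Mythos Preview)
Your argument has a genuine gap at the step ``Reversing the change of variables then completes the estimate with the right-hand side $\|f\|_{L^p(w)}$.'' After your change of variables you correctly obtain $\|\mathcal{M}f\|_{L^p(\tilde w)}$ with $\tilde w=w(\cdot-z_{k,m})$, and the weighted maximal inequality gives $\|\mathcal{M}f\|_{L^p(\tilde w)}\le C([w]_{A_p})\|f\|_{L^p(\tilde w)}$. But $\|f\|_{L^p(\tilde w)}$ is \emph{not} $\|f\|_{L^p(w)}$; undoing the substitution only turns it into $\|f(\cdot+z_{k,m})\|_{L^p(w)}$, which is still the wrong quantity. Weighted $L^p$ spaces are not translation invariant, so there is no way to pass from one to the other with a constant independent of $z_{k,m}$. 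The paper in fact explicitly rejects this route for exactly this reason in the paragraph preceding Lemma~\ref{lem:averageballs}.

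The paper's argument avoids the maximal operator altogether. It dominates $|\Psi|$ by a nonnegative radially decreasing $\Phi\in L^1$, writes $\Phi$ (or approximates it from below) as a nonnegative superposition $\sum a_k|B_k|^{-1}\chi_{B_k}$ of normalized indicators of balls centered at the origin, and then observes that a translate $\tau_z\Phi$ becomes a superposition of indicators of balls centered at $-z$. The crucial ingredient is Lemma~\ref{lem:averageballs}: for \emph{any} ball $B$ (regardless of its center), the averaging operator $f\mapsto|B|^{-1}(\chi_B*f)$ is bounded on $L^p(w)$ with operator norm $\lesssim[w]_{A_p}^{1/p}$. This is proved by tiling $\rn$ by translates of $B$ and using only H\"older and the $A_p$ condition, which is how the exponent $1/p$ appears directly and why no translation of $f$ ever enters. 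Summing over the decomposition gives $\|\tau_z\Phi*f\|_{L^p(w)}\lesssim\|\Phi\|_{L^1}[w]_{A_p}^{1/p}\|f\|_{L^p(w)}$ uniformly in $z$, which is the content of Lemma~\ref{lem:radialdecr} and immediately yields the proposition. Your attempted fix for the exponent via Jones factorization is therefore beside the point: the real obstacle is the translation, and once that is handled by the averaging-operator argument, the $[w]_{A_p}^{1/p}$ dependence comes for free.
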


\medskip

\begin{proof}[Proof of Theorem~\ref{thm:sqweighted}] 
Our proof is inspired by the argument
in~\cite{MR837527}, but it is simpler because
of the structure of our operator.  By Rubio de Francia extrapolation
(see~\cite{MR2797562}), it is enough to prove the desired inequality
when $p=2.$ 

Set
$T^{\bar{z}}_{m,\varepsilon}(f)=\sum_{k\in\ent}
\varepsilon_k\Psi^{\bar{z}}_{k,m}\ast f$,
where $m\in\ent^n$ and $\varepsilon=\{\varepsilon_k\}_{k\in\ent}$ with
$\varepsilon_k=\pm 1$ for each $k\in\ent.$ Without loss of generality
we may assume $c_1=\frac{1}{2}$ and $c_2=2$.   Fix $\varphi\in\sw$ such that
$\widehat{\varphi}\equiv 1$ on
$\{\xi\in\rn:\frac{1}{2}< \abs{\xi}< 2\},$ $\widehat{\varphi}$ is
supported on an annulus,  and
$\sum_{k\in\ent}\abs{\widehat{\varphi}(2^{-k}\xi)}^2=c_\varphi$ for all
$\xi\neq 0$ for some constant $c_\varphi.$  Define $\varphi_k(x)=2^{kn}\varphi(2^kx).$ Then
$\varepsilon_k\Psi^{\bar{z}}_{k,m} \ast f= \varphi_k\ast
\varepsilon_k\Psi^{\bar{z}}_{k,m}\ast \varphi_k\ast f$ pointwise
for all $k\in\ent,$  $m\in\ent^n$, and $f\in \mathcal{S}'(\rn).$  By
Propositions~\ref{lem:squareestimate} and~\ref{lem:unifestimate}, and by
Lemma~\ref{lemma:square-estimate}, it follows that for $w\in A_2$,
$f\in L^2(w)$, and all $\bar{z}$ and $\varepsilon$,
\begin{multline*}
\norm{T^{\bar{z}}_{m,\varepsilon}f}{L^2(w)}^2
=\bigg\|\sum_{k\in\ent}\varphi_k\ast
  \varepsilon_k\Psi^{\bar{z}}_{k,m}\ast \varphi_k\ast f\bigg\|_{L^2(w)}^2 \\
\lesssim \sum_{k\in\ent}\norm{\Psi^{\bar{z}}_{k,m}\ast \varphi_k\ast f}{L^2(w)}^2
\lesssim \sum_{k\in\ent}\norm{\varphi_k\ast f}{L^2(w)}^2\lesssim \norm{f}{L^2(w)}^2;
\end{multline*}
the implicit constants depend only on $\varphi$, $\Psi$, and
$[w]_{A_2}$. 
In other words, we have shown that $T^{\bar{z}}_{m,\varepsilon}$ is
bounded on  $L^2(w)$ for every $w\in A_2$ with the
operator norm controlled by a constant independent of $m,$
$\varepsilon$ and $\bar{z}.$ Clearly, the same argument shows that 
the operators $T^{\bar{z},+}_{m,\varepsilon}$ and
$T^{\bar{z},-}_{m,\varepsilon}$ defined as
$T^{\bar{z}}_{m,\varepsilon}$ but with $k\in\na_0$ and $-k\in\na,$
respectively, instead of $k\in\ent$, are bounded on $L^2(w)$ with
constants independent of the same quantities.

We can now argue as in \cite[p. 177]{MR1800316}.  Let
$\{r_k\}_{k\in\na_0}$ be the system of Rademacher functions.
That is, define  $r_0(t)=-1$ for $0\le t<\frac{1}{2},$ $r_0(t)=1$ for
$\frac{1}{2}\le t<1$, and extend it as a periodic function on $\re$.
Then define $r_k(t)=r_0(2^kt)$ for $0\le t<1.$ Recall
that $\{r_k\}_{k\in \na_0}$ is an orthonormal system in $L^2([0,1])$
and  we have that for all $\{a_k\}\in\ell^2$,
\[
\bigg\|\sum_{k=0}^\infty a_k r_k\bigg\|_{L^2([0,1])}
= \left(\sum_{k=0}^\infty \abs{a_k}^2\right)^{\frac{1}{2}}.
\]
Therefore, if we set $\varepsilon_t=\{r_k(t)\}_{k\in\na_0}$ for $0\le
t<1,$ then we have that
\begin{align*}
\sum_{k\in\ent} \abs{\Psi^{\bar{z}}_{k,m}\ast f(x)}^2 &=\int_0^1\abs{\sum_{k=0}^\infty r_k(t) \Psi^{\bar{z}}_{k,m}\ast f(x)}^2\,dt+\int_0^1\abs{\sum_{k=-\infty}^1 r_{-k}(t) \Psi^{\bar{z}}_{k,m}\ast f(x)}^2\,dt\\
&= \int_0^1\abs{T^{\bar{z},+}_{m,\varepsilon_t}f(x)}^2\,dt +\int_0^1\abs{T^{\bar{z},-}_{m,\varepsilon_t}f(x)}^2\,dt.                                                                   
\end{align*}
If we fix $w\in A_2$ and compute the $L^2(w)$ norm, then we get 
\begin{multline*}
\bigg\|\bigg(\sum_{k\in\ent} \abs{\Psi^{\bar{z}}_{k,m}\ast
  f(x)}^2\bigg)^{\frac{1}{2}}\bigg\|_{L^2(w)}^2 \\
=  \int_0^1\norm{T^{\bar{z},+}_{m,\varepsilon_t}f}{L^2(w)}^2\,dt+\int_0^1\norm{T^{\bar{z},-}_{m,\varepsilon_t}f}{L^2(w)}^2\,dt\lesssim \norm{f}{L^2(w)}^2,
\end{multline*}
with the implicit constant independent of $m,$ $\bar{z}$ and $f.$
\end{proof}

\subsection*{Proof of Propositions~\ref{lem:squareestimate} and \ref{lem:unifestimate}}

\begin{proof}[Proof of Proposition~\ref{lem:squareestimate}]
Fix $w,$  $\varphi$ and $\{f_k\}_{k\in\ent}$  as in the hypotheses.
As before, we can assume without loss of generality that in defining
the support of $\widehat{\varphi},$  $c_1=\frac{1}{2}$ and $c_2=2$.
For all $N\in \na_0$, define
\[ F_N=\sum_{\abs{\ell}\le N} \varphi_\ell\ast f_\ell.  \]
Then for all $N_2>N_1$ and all $k\in \ent$ we have that 
\[ \varphi_k\ast (F_{N_2}-F_{N_1})=\sum_{N_1<\abs{\ell}\le
  N_2}\varphi_k\ast \varphi_\ell\ast f_\ell. \] 
Comparing supports on the Fourier transform side, it follows that
\[
\varphi_k\ast (F_{N_2}-F_{N_1})=
\begin{cases}
  0   & \text{ if } \abs{k}\le N_1-1 \text{  or } \abs{k}\ge N_2+2,   \\
  \sum_{\ell=-1}^1\varphi_k\ast \varphi_{k-\ell}\ast f_{k-\ell}   &   \text{ if }  N_1+2\le \abs{k}\le N_2-1,  \\ 
   \sum_{\ell=-1}^0\varphi_k\ast \varphi_{k-\ell}\ast f_{k-\ell}   &   \text{ if }   \abs{k}= N_1+1,  \\ 
    \sum_{\ell=0}^1 \varphi_k\ast \varphi_{k-\ell}\ast f_{k-\ell}   &   \text{ if }   \abs{k}= N_2 , \\ 
   \varphi_k\ast \varphi_{k+1}\ast f_{k+1}   &   \text{ if }  \abs{k}= N_1,  \\
    \varphi_k\ast \varphi_{k-1}\ast f_{k-1}   &   \text{ if }   \abs{k}= N_2+1.
  \end{cases}
\]

Recall that for all $k\in \ent$,
$\abs{\varphi_k\ast g}\lesssim \mathcal{M}(g)$ with a constant that
depends only on $\varphi$.  (See~\cite{MR1800316}.)  Since
$\mathcal{M}$ is bounded on $L^p(w)$, it follows that $F_N\in L^p(w)$ for
all $N$.  Moreover, by  Lemma~\ref{lemma:square-estimate}, this
maximal operator inequality and Lemma~\ref{lemma:fefferman-stein}, we
get that
\begin{multline*}
\norm{F_{N_1}-F_{N_2}}{L^p(w)}
\lesssim \sum_{\ell=-1}^1\bigg\|\bigg(\sum_{N_1\le \abs{k}\le N_2+1}
\abs{\varphi_k\ast \varphi_{k-\ell}\ast
  f_{k-\ell}}^2\bigg)^{\frac{1}{2}}\bigg\|_{L^p(w)}\\ 
\lesssim \sum_{\ell=-1}^1
\bigg\|\bigg(\sum_{N_1\le \abs{k}\le
  N_2+1}\abs{\mathcal{M}(\mathcal{M}
  (f_{k-\ell}))}^2\bigg)^{\frac{1}{2}}\bigg\|_{L^p(w)}
\lesssim \bigg\|\bigg(\sum_{N_1-1\le \abs{k}\le
  N_2+2}\abs{f_k}^{2}\bigg)^{\frac{1}{2}}\bigg\|_{L^p(w)}. 
\end{multline*}
The last term in the above chain of inequalities converges to 0 as
$N_1,N_2$ tend to infinity; therefore,  we have that $\{F_N\}_{N\in\na_0}$ is
a Cauchy sequence in $L^p(w)$.  Hence,
$\sum_{k\in\ent} \varphi_k\ast f_k$ converges in $L^p(w).$ Moreover,
the same argument as before also shows that
\[
\bigg\|\sum_{\abs{k}\le N}\varphi_k\ast f_k\bigg\|_{L^p(w)}
\lesssim \bigg\|\bigg(\sum_{ \abs{k}\le
  N+2}\abs{f_k}^{2}\bigg)^{\frac{1}{2}}\bigg\|_{L^p(w)}.
\]
with the implicit constant independent of the sequence
$\{f_k\}_{k\in\ent}.$ If we let $N$ tend to infinity we get the
desired inequality.
\end{proof}

\medskip

If we did not care about the size of the constant, we could easily
prove a version of Proposition~\ref{lem:unifestimate} using the pointwise
inequality
$\abs{\Psi^{\bar{z}}_{k,m}\ast f(x)}\le C \mathcal{M}(f)(x)$.
However, the constant may
depend on $\bar{z}$ and blow up with $k$ and $m.$ Moreover, the
uniform pointwise estimate
$\abs{\Psi^{\bar{z}}_{k,m}\ast f(x)}\le C_\Psi
\mathcal{M}(f)(x-z_{k,m})$
only gives the desired result if $w$ is constant since otherwise
$L^p(w)$ is not translation invariant.  

Therefore, to prove the desired uniform
estimate in weighted spaces  we will use an argument developed
in~\cite{CUMR} for matrix weights. The proof requires two lemmas.
The first was proved in~\cite{CUMR};
for the convenience of the reader we give the proof for the scalar case here.

\begin{lemma}\label{lem:averageballs}
If $1<p<\infty$ and $w\in A_p$,  then for every ball $B$ and $f\in L^p(w)$,
\[
\norm{\abs{B}^{-1}(\chi_B\ast f)}{L^p(w)}\lesssim [w]^{\frac{1}{p}}_{A_p}\norm{f}{L^p(w)},
\]
with the implicit constant independent of  $w.$ 
\end{lemma}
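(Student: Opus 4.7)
The plan is to combine a weighted Hölder inequality with the $A_p$ condition applied on the ball $B(x,r)$, and then use Fubini to reduce matters to a uniform pointwise estimate for a certain auxiliary integral, which I control via the doubling property of $A_p$ weights. Replacing $f$ by $|f|$ and translating, I may assume $f\ge 0$ and $B=B(0,r)$, so that
\[
T_Bf(x):=|B|^{-1}(\chi_B\ast f)(x)=\avgint_{B(x,r)} f(y)\,dy.
\]

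First I would write $f = (fw^{1/p})\cdot w^{-1/p}$ and apply Hölder's inequality to the average, obtaining
\[
\avgint_{B(x,r)} f\,dy \;\le\; \Big(\avgint_{B(x,r)} f^p w\,dy\Big)^{1/p}\Big(\avgint_{B(x,r)} w^{-p'/p}\,dy\Big)^{1/p'}.
\]
Raising to the $p$-th power and invoking the $A_p$ condition on $B(x,r)$ (noting $p/p'=p-1$) bounds the second factor by $[w]_{A_p}\big/\avgint_{B(x,r)}w$, yielding
\[
|T_Bf(x)|^p \;\le\; [w]_{A_p}\cdot\frac{1}{w(B(x,r))}\int_{B(x,r)} f(y)^p w(y)\,dy.
\]
Multiplying by $w(x)$, integrating over $\rn$, and applying Fubini (using $\chi_{B(x,r)}(y)=\chi_{B(y,r)}(x)$) gives
\[
\|T_Bf\|_{L^p(w)}^p \;\le\; [w]_{A_p}\int f(y)^p w(y)\Big(\int_{B(y,r)}\frac{w(x)}{w(B(x,r))}\,dx\Big)\,dy.
\]

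The main obstacle, and the crux of the argument, is to show that the inner integral $I(y)$ is bounded uniformly in $y$ and $r$. For $x\in B(y,r)$ one has $B(y,r)\subset B(x,2r)$, so the doubling property of the $A_p$ weight $w$ yields $w(B(y,r))\le w(B(x,2r))\lesssim w(B(x,r))$; thus $w(B(x,r))^{-1}\lesssim w(B(y,r))^{-1}$ and
\[
\int_{B(y,r)}\frac{w(x)}{w(B(x,r))}\,dx \;\lesssim\; \frac{1}{w(B(y,r))}\int_{B(y,r)} w(x)\,dx = 1.
\]
Combining this with the previous display produces $\|T_Bf\|_{L^p(w)}^p\lesssim [w]_{A_p}\|f\|_{L^p(w)}^p$, which is the desired estimate after taking $p$-th roots. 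The only dependence on $w$ beyond the explicit factor $[w]_{A_p}^{1/p}$ enters through the doubling constant, which for an $A_p$ weight is controlled by $n$, $p$ and $[w]_{A_p}$, so the remaining implicit constant has no further dependence on the weight.
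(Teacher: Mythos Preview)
Your reduction ``translating, I may assume $B=B(0,r)$'' is where the argument breaks: weighted $L^p$ spaces are not translation invariant, a point the paper stresses just before this lemma. Writing $B=B(z_0,r)$ one has $T_Bf=(T_{B(0,r)}f)(\cdot-z_0)$, so $\|T_Bf\|_{L^p(w)}=\|T_{B(0,r)}f\|_{L^p(\tilde w)}$ with $\tilde w=w(\cdot+z_0)$; applying your centered-ball bound for the weight $\tilde w$ then yields $\|f\|_{L^p(\tilde w)}$ on the right, not $\|f\|_{L^p(w)}$. Nor does the direct route work: running H\"older/$A_p$/Fubini on $B(x-z_0,r)$ leaves the inner integral $\int_{B(y+z_0,r)} w(x)\,/\,w(B(x-z_0,r))\,dx$, and your doubling comparison only relates $w(B(x-z_0,r))$ to $w(B(y,r))$, producing the ratio $w(B(y+z_0,r))/w(B(y,r))$, which is not uniformly bounded.

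A second, smaller issue: even for $B=B(0,r)$ your bound is $[w]_{A_p}^{2/p}$, not the stated $[w]_{A_p}^{1/p}$, because the doubling constant for an $A_p$ weight is itself comparable to $[w]_{A_p}$. Your last sentence conflates ``depends on $w$ only through $[w]_{A_p}$'' with ``independent of $w$''; the lemma asserts the latter. The paper's proof avoids doubling entirely: it tiles $\rn$ by translates $Q_m$ of the cube, replaces the moving average at $x\in Q_m$ by the \emph{fixed} average over $3Q_m$, applies H\"older and the $A_p$ condition once on each $3Q_m$, and sums using that the family $\{3Q_m\}$ splits into $3^n$ pairwise disjoint subfamilies. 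This uses $[w]_{A_p}$ exactly once and gives the power $1/p$.
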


\begin{proof} 
 Let $p,$ $w$ and $f$ be as in the hypotheses.  It is enough to prove this estimate for cubes with sides parallel to
  the coordinate axes instead of balls: given any ball $B$, if $Q$ is
  the smallest cube that contains $B$, then 
  $\abs{\abs{B}^{-1}(\chi_B\ast f)}\le C_n \abs{Q}^{-1}(\chi_Q\ast
  \abs{f})$.   

Fix a cube $Q$, denote its side length by $\ell(Q)$ and define the cubes  
$ Q_m=Q+\ell(Q) m$ for $m\in\ent^n.$ Then  $\{Q_m\}_{m\in\ent^n}$ is a
pairwise disjoint partition of $\rn.$ Further, we can divide the
family  $\{3Q_m\}_{m\in\ent^n}$ into $3^n$ families $\Q_{J},$  $J\in
\{1,\cdots, 3^n\},$ of pairwise disjoint cubes ($3Q_m$ denotes the
cube with the same center as $Q_m$ and side length $3\ell(Q_m)).$ Then
for all $x\in Q_m$, 
\[
\abs{Q}^{-1}\abs{(\chi_Q\ast f)(x)}\le \abs{Q}^{-1}\int_\rn\abs{f(y)\chi_Q(x-y)}\,dy\le 3^n\avgint_{3Q_m} \abs{f(y)}\,dy.
\]
Hence,
\begin{align*}
&\int_{\rn}\abs{\abs{Q}^{-1}(\chi_Q\ast f)(x)}^p w(x)\,dx \\
&\qquad \qquad =\sum_{m\in\ent^n} \int_{Q_m}\abs{\abs{Q}^{-1}(\chi_Q\ast f)(x)}^p w(x)\,dx\\
&\qquad \qquad\lesssim \sum_{m\in\ent^n} \int_{Q_m}\left(\avgint_{3Q_m} \abs{f(y)}\,dy\right)^p w(x)\,dx\\
&\qquad \qquad\lesssim \sum_{J=1}^{3^n}\sum_{3Q_m\in \Q_J}\int_{3Q_m}w(x)\,dx \left(\avgint_{3Q_m}\abs{f(y)}w(y)^{\frac{1}{p}}w(y)^{-\frac{1}{p}}dy\right)^p.
\intertext{ If we apply H\"older's inequality, and  use the facts that
  $w\in A_p$ and the cubes in $\Q_J$ are pairwise disjoint, we get}
&\qquad \qquad\lesssim \sum_{J=1}^{3^n}\sum_{3Q_m\in \Q_J} \left(\int_{3Q_m}
  \abs{f(x)}^pw(x)\,dx \right)\left(\avgint_{3Q_m}w(x)\,dx\right)
  \left(\avgint_{3Q_m}w(x)^{-\frac{p'}{p}}\,dx \right)^{\frac{p}{p'}} \\
&\qquad \qquad \lesssim [w]_{A_p}\norm{f}{L^p(w)}^p.
\end{align*}
The implicit constants are independent of $w$ and $f$, and the proof
is complete.
\end{proof}

The next lemma is also from~\cite{CUMR} where it was given implicitly
and without proof.

\begin{lemma}\label{lem:radialdecr}
Let $\Phi\in L^1(\rn)$ be non-negative and radially decreasing. Define
$\tau_z(\Phi)(x)=\Phi(x+z)$ for $z\in \rn.$  If $1<p<\infty$ and $w\in
A_p$,  then for all $f\in L^p(w)$, 
\[
\norm{\tau_z(\Phi)\ast f}{L^p(w)}\lesssim \norm{\Phi}{L^1}  [w]^{\frac{1}{p}}_{A_p}\norm{f}{L^p(w)},
\]
with the implicit constant independent of  $z,$  $w$ and  $\Phi.$ 
\end{lemma}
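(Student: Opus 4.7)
The plan is to reduce the estimate to Lemma~\ref{lem:averageballs} by a layer-cake decomposition of $\Phi$. Since $\Phi$ is non-negative and radially decreasing on $\rn$, each superlevel set $\{y\in\rn:\Phi(y)>t\}$ is an open ball $B(0,r(t))$ centered at the origin for some $r(t)\ge 0$. The layer-cake formula then gives
\[
\Phi(y)=\int_0^\infty \chi_{B(0,r(t))}(y)\,dt,
\]
and after translating by $z$ we obtain $\tau_z(\Phi)(y)=\int_0^\infty \chi_{B(-z,r(t))}(y)\,dt$. Applying Fubini's theorem to the convolution, pointwise,
\[
\tau_z(\Phi)\ast f(x)=\int_0^\infty \bigl(\chi_{B(-z,r(t))}\ast f\bigr)(x)\,dt.
\]

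Next I would invoke Minkowski's integral inequality to pull the $L^p(w)$ norm inside the $t$-integral:
\[
\norm{\tau_z(\Phi)\ast f}{L^p(w)}\le \int_0^\infty \norm{\chi_{B(-z,r(t))}\ast f}{L^p(w)}\,dt.
\]
The integrand is then estimated by Lemma~\ref{lem:averageballs}, whose implicit constant depends only on $n$, $p$ and $[w]_{A_p}$ and is independent of the location and radius of the ball. Multiplying and dividing by $\abs{B(-z,r(t))}=\abs{B(0,r(t))}$,
\[
\norm{\chi_{B(-z,r(t))}\ast f}{L^p(w)}\lesssim \abs{B(0,r(t))}\,[w]^{\frac{1}{p}}_{A_p}\norm{f}{L^p(w)}.
\]

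Finally, the layer-cake formula applied in reverse yields
\[
\int_0^\infty \abs{B(0,r(t))}\,dt=\int_0^\infty \abs{\{y\in\rn:\Phi(y)>t\}}\,dt=\norm{\Phi}{L^1},
\]
and combining this with the two previous displays produces the desired bound with an implicit constant independent of $z$, $w$ and $\Phi$. There is no serious obstacle in the argument: the only delicate point is to confirm that Lemma~\ref{lem:averageballs} really is uniform in the center of the ball, but this is manifest from its proof (the cubes $Q_m$ partitioning $\rn$ can be translated freely, and both the $A_p$ constant and the chain of inequalities are translation invariant).
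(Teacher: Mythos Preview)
Your proof is correct and follows essentially the same approach as the paper: both arguments reduce to Lemma~\ref{lem:averageballs} by writing $\Phi$ as a superposition of characteristic functions of balls, the only difference being that you use the continuous layer-cake decomposition whereas the paper uses a discrete approximation $\Phi=\sum_k a_k\abs{B_k}^{-1}\chi_{B_k}$ followed by the Monotone Convergence Theorem. Your version is arguably a bit cleaner since it avoids the approximation step; the minor imprecision that the superlevel set $\{\Phi>t\}$ need not be exactly an \emph{open} ball is harmless, as the boundary has measure zero.
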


\begin{proof}
Assume first that $\Phi\in L^1(\rn)$ is of the form
\[
\Phi(x)=\sum_{k=1}^\infty a_{k}\abs{B_k}^{-1}\chi_{B_k}(x),
\]
where $a_k\ge 0$ and $B_k$ is a ball centered at the origin for each
$k\in \na.$ Note that $\norm{\Phi}{L^1}=\sum_{k=1}^\infty a_k.$  Fix
$z\in\rn$; then we have
\[\tau_{z}(\Phi)=\sum_{k=1}^\infty a_k \, |\widetilde{B_k}|^{-1}\chi_{\widetilde{B_k}},\]
where $\widetilde{B_k}=-z+B_k$ for $k\in \na.$ With $p,$ $w$ and $f$
as in the hypotheses, by Lemma~\ref{lem:averageballs} we have that
\begin{multline*}
\norm{\tau_z(\Phi)\ast f}{L^p(w)}
\le \sum_{k=1}^\infty a_k \, \norm{|\widetilde{B_k}|^{-1}(\chi_{\widetilde{B_k}}\ast f)}{L^p(w)}\\
\lesssim [w]_{A_p}^{\frac{1}{p}}\sum_{k=1}^\infty a_k \norm{f}{L^p(w)}=[w]_{A_p}^{\frac{1}{p}}\norm{\Phi}{L^1} \norm{f}{L^p(w)},
\end{multline*}
where the implicit constant is independent of $z,$ $w,$ $\Phi$ and $f.$

To complete the proof, note that an arbitrary function as in the hypotheses can
be approximated from below by a sequence of functions of the form
treated above.  The desired inequality then follows by the Monotone Convergence Theorem.
\end{proof}

\begin{proof}[Proof of Proposition~\ref{lem:unifestimate}]
 
  Since $\Psi\in \sw,$ there exists a non-negative radially decreasing
  function $\Phi\in L^1(\rn)$ such that $\abs{\Psi(x)}\le \Phi(x)$ for
  all $x\in \rn.$ Let
  $\Phi^{\bar{z}}_{k,m}(x)=2^{nk} \Phi(2^k(x+z_{k,m}))$.
Fix  $1<p<\infty$ and $w\in A_p$;  then by Lemma~\ref{lem:radialdecr}
we have that 
\begin{multline*}
\norm{\Psi^{\bar{z}}_{k,m}\ast f}{L^p(w)}
\le \norm{\Phi^{\bar{z}}_{k,m}\ast \abs{f}}{L^p(w)}\\
\lesssim \norm{2^{kn}\Phi(2^{k}\cdot)}{L^1}  [w]^{\frac{1}{p}}_{A_p}\norm{f}{L^p(w)}=\norm{\Phi}{L^1}  [w]^{\frac{1}{p}}_{A_p}\norm{f}{L^p(w)}.
\end{multline*}
\end{proof}

\section{Bilinear extrapolation on variable Lebesgue spaces}\label{sec:variable}

The main result in this section is Theorem~\ref{thm:extrapolation}, a
bilinear extrapolation result  that allows to deduce bilinear
estimates in variable Lebesgue spaces from bilinear estimates in
weighted Lebesgue spaces.  This result is key for our proof of
Theorem~\ref{thm:KPvariable}.   Extrapolation is an important
technique for proving norm inequalities in variable Lebesgue spaces:
we refer the reader to \cite{MR3026953,MR2210118} for further details
in the linear case.  

We divide this section into two
parts.  In the first we give some basic definitions and results about
variable Lebesgue spaces.  In the second we state and prove
Theorem~\ref{thm:extrapolation}.  In Section~\ref{sec:applextrap} below, in order to
illustrate the broader utility of bilinear extrapolation, we give 
applications of  Theorem~\ref{thm:extrapolation} to prove estimates for a variety of bilinear
operators. 
 
\subsection*{Definitions and preliminary results}
For complete information on variable Lebes\-gue spaces and for proofs of
the results stated here, see~\cite{MR3026953,MR2790542}

Define the collection $\Pp$ of exponent functions to be the set of
measurable functions $\pp : \rn \rightarrow [1,\infty]$.  Similarly,
let  $\Pp_0$ denote the set of measurable functions $\pp : \rn
\rightarrow (0,\infty]$.    Given an exponent $\pp$, we let
\[ p_- = \essinf_{x\in \rn} p(x), \qquad p_+ = \esssup_{x\in \rn}
p(x). 
\]
Given  $p(\cdot)\in\Pp_0,$ we define the modular
\[ \rho_\pp(f) 
= \int_{\rn\setminus \re^n_\infty} \abs{f(x)}^{p(x)}\,dx + \|f\|_{L^\infty(\re^n_\infty)}, \]
where $\re^n_\infty = \{ x\in \rn : p(x)=\infty\}$.  
The variable Lebesgue space $L^{p(\cdot)}$
consists of all measurable functions $f$ defined on $\rn$ that satisfy
$\rho_\pp(f/\lambda)<\infty$ 
for some $\lambda>0.$ 
This is a quasi-Banach space (Banach
space if $p(\cdot)\in \Pp$) with the quasi-norm (norm if
$p(\cdot)\in \Pp$) given by
\[
\norm{f}{L^{p(\cdot)}}=\inf\left\{\lambda>0: \rho_\pp(f/\lambda)\le 1\right\}.
\]
The variable Lebesgue spaces generalize the classical Lebesgue spaces:
 if $0<p_0\leq \infty$ and $p(\cdot)\equiv p_0,$ then
$L^{p(\cdot)}=L^{p_0}$ with equality of norms.

The following lemmas are basic properties of the norm.  The first
relates the norm and the modular, the second generalizes H\"older's
inequality, and the third gives an equivalent expression for the norm.

\begin{lem} \label{lemma:modular-norm}
Given $\pp \in \Pp_0$, $\|f\|_{L^{\pp}} \leq 1$ if and only if
$\rho_\pp(f)\leq 1$. 
\end{lem}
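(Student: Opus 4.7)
The plan is to verify the two implications separately, relying throughout on the pointwise monotonicity of the modular $\rho_\pp$ under rescaling of its argument. The \emph{reverse} implication is immediate: if $\rho_\pp(f)\le 1$, then $\lambda=1$ is an admissible competitor in the infimum defining $\|f\|_{L^{\pp}}$, so the norm is bounded by $1$.

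For the \emph{forward} implication, the key observation is that $\lambda\mapsto \rho_\pp(f/\lambda)$ is non-increasing on $(0,\infty)$: if $0<\mu\le\lambda$ then $|f/\lambda|\le|f/\mu|$ pointwise, and both $t\mapsto t^{p(x)}$ on $[0,\infty)$ and the $L^\infty$ norm preserve pointwise inequalities, so $\rho_\pp(f/\lambda)\le \rho_\pp(f/\mu)$. Combined with the infimum definition of the norm, this yields $\rho_\pp(f/\lambda)\le 1$ for every $\lambda>\|f\|_{L^{\pp}}$. Assuming now $\|f\|_{L^{\pp}}\le 1$: if strictly less than $1$, take $\lambda=1$ to get $\rho_\pp(f)\le 1$ at once; if equal to $1$, take a sequence $\lambda_k\downarrow 1$ and pass to the limit. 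Monotone convergence handles the integral piece, since $|f/\lambda_k|^{p(x)}\uparrow |f(x)|^{p(x)}$ on $\rn\setminus\re^n_\infty$, while $\|f/\lambda_k\|_{L^\infty(\re^n_\infty)}=\lambda_k^{-1}\|f\|_{L^\infty(\re^n_\infty)}\uparrow\|f\|_{L^\infty(\re^n_\infty)}$ directly, producing $\rho_\pp(f)\le 1$ as required.

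The only real subtlety is that $\rho_\pp$ decomposes into two qualitatively different pieces, an integral term on $\rn\setminus\re^n_\infty$ and an $L^\infty$ norm on $\re^n_\infty$; both, however, share the same pointwise and scaling monotonicity, so the limiting argument applies uniformly to their sum. I do not anticipate any real obstacle in executing this plan: it is a standard application of the definition of the Luxemburg-type norm together with monotone convergence.
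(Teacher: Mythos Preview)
Your argument is correct and is the standard Luxemburg-norm proof; the paper does not write out its own proof but simply refers to \cite{MR3026953} (where the case $\pp\in\Pp$ is treated) with the remark that the argument adapts to $\Pp_0$. Your write-up is exactly such an adaptation, handling the integral and $L^\infty$ pieces of the modular in parallel, so there is nothing to add.
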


\begin{lem} \label{lemma:gen-holder}
Given $\pp,\, \qq,\, \rr \in \Pp_0$, suppose
\[ \frac{1}{\rr} = \frac{1}{\pp} + \frac{1}{\qq}.  \]
Then  for all $f\in L^\pp$ and $g\in L^\qq$, 
\[ \|fg\|_{L^\rr} \lesssim \|f\|_{L^\pp}\|g\|_{L^\qq}. \]
The implicit constant depends only on $\pp$ and $\qq$.  
\end{lem}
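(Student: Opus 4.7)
The plan is to reduce the norm inequality to a modular estimate via Lemma \ref{lemma:modular-norm}, and then prove the modular estimate by a pointwise case analysis.

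First, by the homogeneity of the quasi-norm, I would normalize so that $\|f\|_{L^\pp} = \|g\|_{L^\qq} = 1$. Then Lemma \ref{lemma:modular-norm} gives $\rho_\pp(f) \leq 1$ and $\rho_\qq(g) \leq 1$; in particular $\|f\|_{L^\infty(E_\infty^p)} \leq 1$ and $\|g\|_{L^\infty(E_\infty^q)} \leq 1$, where $E_\infty^p = \{x: p(x)=\infty\}$ and similarly for $q$. By Lemma \ref{lemma:modular-norm} again, it suffices to show $\rho_\rr(fg) \leq C_0$ for an absolute constant $C_0$: a suitable scaling $\lambda = \lambda(\pp,\qq) \geq 1$, depending on $\pp,\qq$ only through $r_- = \essinf r$, then forces $\rho_\rr(fg/\lambda) \leq 1$ and hence $\|fg\|_{L^\rr} \leq \lambda$.

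To bound $\rho_\rr(fg)$, I would split $\rn$ into four measurable regions according to whether $p(x)$ and $q(x)$ are finite. The identity $1/\rr = 1/\pp + 1/\qq$ forces $r(x) \leq \min(p(x), q(x))$ pointwise, and $r(x) = \infty$ iff both $p(x) = q(x) = \infty$. On the set where both $p(x)$ and $q(x)$ are finite, the numbers $p(x)/r(x)$ and $q(x)/r(x)$ are classical conjugate exponents in $(1,\infty)$, so the scalar Young inequality yields
\[
|f(x)g(x)|^{r(x)} \;\leq\; \frac{r(x)}{p(x)}|f(x)|^{p(x)} + \frac{r(x)}{q(x)}|g(x)|^{q(x)} \;\leq\; |f(x)|^{p(x)} + |g(x)|^{q(x)}.
\]
On $E_\infty^p \setminus E_\infty^q$ we have $r(x) = q(x)$ and $|f(x)| \leq 1$ a.e., so $|fg|^{r(x)} \leq |g|^{q(x)}$; the set $E_\infty^q \setminus E_\infty^p$ is symmetric. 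On $E_\infty^p \cap E_\infty^q = \{r = \infty\}$, $|fg| \leq 1$ pointwise.

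Integrating the first three estimates over the appropriate regions and taking the $L^\infty$ norm over the fourth gives
\[
\rho_\rr(fg) \;\leq\; \rho_\pp(f) + \rho_\qq(g) + 1 \;\leq\; 3,
\]
and the scaling step described above then converts this into the norm bound. The proof amounts to a routine pointwise version of the classical Hölder/Young argument; the only real obstacle is the bookkeeping across the four regions, which is needed to make sense of the pointwise Young inequality at points where $p(x)$ or $q(x)$ is infinite.
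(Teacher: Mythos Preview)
Your argument is correct and is precisely the standard proof of the generalized H\"older inequality in variable Lebesgue spaces. The paper does not supply its own proof of this lemma: it cites \cite{MR3026953} (where the result is proved for exponents in $\Pp$) and remarks that the argument adapts to $\Pp_0$. Your proof \emph{is} that adaptation --- pointwise Young on the set where both exponents are finite, together with the trivial $L^\infty$ bounds on the remaining three regions --- so there is nothing to compare.

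One small point worth making explicit: the final scaling step from $\rho_\rr(fg)\le 3$ to $\rho_\rr(fg/\lambda)\le 1$ uses $\lambda^{-r(x)}\le \lambda^{-r_-}$ for $\lambda\ge 1$, which requires $r_->0$. This is not literally forced by the definition of $\Pp_0$ as stated, but it is implicit in the paper's assertion that $L^\rr$ is a quasi-Banach space, and in any case holds in every application in the paper.
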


\begin{remark} 
In~\cite{MR3026953} both are proved assuming that the
exponents are in $\Pp$; however, the proofs can be easily adapted to
this more general setting.
\end{remark}

\begin{lem} \label{lemma:dual}
If $\pp \in \Pp$, then for all $f\in \Lp$,
\[ \|f\|_{\Lp} \approx \sup \int_{\subRn} fg\,dx, \]
where the supremum is taken over all $g\in L^\cpp$,
$\|g\|_{L^\cpp}=1$, with $\cpp \in \Pp$ defined pointwise by
\[ \frac{1}{\pp} + \frac{1}{\cpp} = 1. \]
The implicit constants depend only on $\pp$. 
\end{lem}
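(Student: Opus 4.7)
The plan is to prove the two inequalities separately. The upper bound $\sup_g \int_\subRn fg\,dx \lesssim \|f\|_\Lp$ follows directly from the generalized H\"older inequality (Lemma~\ref{lemma:gen-holder}) applied with $\rr \equiv 1$: since $\tfrac{1}{1} = \tfrac{1}{\pp} + \tfrac{1}{\cpp}$ pointwise, any admissible $g$ satisfies
\[
\left| \int_\subRn fg\,dx \right| \leq \|fg\|_{L^1} \lesssim \|f\|_{\Lp}\|g\|_{L^\cpp} = \|f\|_{\Lp},
\]
and taking the supremum over such $g$ gives the stated bound.

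For the lower bound, after normalizing so that $\|f\|_{\Lp} = 1$, I would exhibit an explicit test function $g_0$ with $\|g_0\|_{L^\cpp} \lesssim 1$ and $\int fg_0\,dx \gtrsim 1$. Split $\subRn = E_f \cup E_\infty$ with $E_\infty = \{x : p(x) = \infty\}$ and $E_f$ its complement, noting that $\cpp = 1$ on $E_\infty$. On $E_f$ set $g_0(x) = |f(x)|^{p(x)-1}\,\overline{\sgn f(x)}$, and on $E_\infty$, given $\epsilon > 0$, pick a set $F \subset E_\infty$ of finite positive measure with $|f(x)| \geq \|f\|_{L^\infty(E_\infty)} - \epsilon$ on $F$ and set $g_0(x) = |F|^{-1}\,\overline{\sgn f(x)}\,\chi_F(x)$. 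The algebraic identity $(p(x)-1)p'(x) = p(x)$ on $E_f$ gives the modular identity $\rho_\cpp(g_0\chi_{E_f}) = \rho_\pp(f\chi_{E_f}) \leq 1$, while $\rho_\cpp(g_0\chi_{E_\infty}) = 1$; hence by Lemma~\ref{lemma:modular-norm} combined with standard modular-to-norm estimates, $\|g_0\|_{L^\cpp}$ is bounded by an absolute constant. A direct pairing computation then gives
\[
\int_\subRn f g_0\,dx \;\geq\; \rho_\pp(f \chi_{E_f}) + \|f\|_{L^\infty(E_\infty)} - \epsilon.
\]

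The principal obstacle is to extract a uniform quantitative lower bound $\rho_\pp(f) \gtrsim 1$ from $\|f\|_{\Lp} = 1$: Lemma~\ref{lemma:modular-norm} supplies $\rho_\pp(f) \leq 1$, but the reverse inequality can genuinely fail when $\pp$ is unbounded or takes the value $\infty$ on a set of positive measure. The way around this is to use the defining infimum $\|f\|_\Lp = \inf\{\lambda : \rho_\pp(f/\lambda) \leq 1\}$: for every $0 < \lambda < 1$ one has $\rho_\pp(f/\lambda) > 1$, which forces either the integral piece $\int_{E_f}|f/\lambda|^{p(x)}\,dx$ or the sup piece $\lambda^{-1}\|f\|_{L^\infty(E_\infty)}$ to exceed $1/2$. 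A case analysis together with a careful passage to the limit $\lambda \uparrow 1$ (using monotone convergence on the integral piece, and simple continuity of the $L^\infty$ piece) transfers this positivity to the corresponding contribution in $\int fg_0\,dx$, yielding $\int fg_0 \,dx \geq c > 0$ uniformly after sending $\epsilon \to 0$. Dividing $g_0$ by its $L^\cpp$-norm then completes the proof.
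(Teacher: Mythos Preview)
The paper does not give its own proof of this lemma; it is quoted as background from~\cite{MR3026953,MR2790542}. So there is nothing to compare against, and the question is simply whether your argument is correct.

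Your upper bound via Lemma~\ref{lemma:gen-holder} is fine. The lower bound, however, has a genuine gap precisely at the step you flag as the ``principal obstacle.'' Your proposed fix---passing to the limit $\lambda\uparrow 1$ in the inequality $\rho_\pp(f/\lambda)>1$ using monotone convergence on the integral piece---does not work. The integrand $\lambda^{-p(x)}|f(x)|^{p(x)}$ is \emph{decreasing} as $\lambda\uparrow 1$, so to pass to the limit you would need an integrable majorant, i.e.\ $\rho_\pp(f/\lambda_0)<\infty$ for some $\lambda_0<1$; Fatou goes the wrong way. This can fail badly: on $\re$, take $p(x)=n$ on $[n,n+1)$ for $n\ge 1$ and $f=a_n$ there with $a_n^n=\epsilon/n^2$. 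Then $\rho_\pp(f)=\epsilon\pi^2/6$ is as small as you like, yet $\rho_\pp(f/\lambda)=\epsilon\sum_n\lambda^{-n}/n^2=\infty$ for every $\lambda<1$, so $\|f\|_\Lp=1$. Your test function gives $\int fg_0=\rho_\pp(f)=\epsilon\pi^2/6$, with no uniform lower bound; no limiting argument in $\lambda$ will repair this, because the modular genuinely jumps at $\lambda=1$.

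The standard remedy is to truncate in the exponent rather than rescale the argument: set $\Omega_N=\{x:p(x)\le N\}$ and run your construction on $f\chi_{\Omega_N}$. Since $p$ is bounded on $\Omega_N$, one has $\rho_\pp(f\chi_{\Omega_N})=1$ whenever $\|f\chi_{\Omega_N}\|_\Lp=1$ (your $\lambda$-argument now works, with a dominating factor $\lambda^{-N}$), so your $g_0$ yields $\sup_g\int f\chi_{\Omega_N}\,g\,dx\ge c\,\|f\chi_{\Omega_N}\|_\Lp$ with $c$ independent of $N$. Let $N\to\infty$ and use monotone convergence for the \emph{norm} (which does hold in $\Lp$) to recover $\|f\chi_{\{p<\infty\}}\|_\Lp$; combine with your $E_\infty$ piece. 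Note, incidentally, that in the paper's only use of this lemma (the proof of Theorem~\ref{thm:extrapolation}) the relevant exponent $\br$ satisfies $\bar{r}_+<\infty$, so your argument as written would already suffice there.
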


Central to our result is the boundedness of the maximal operator
$\mathcal{M}$ on $\Lp$.  Let $\B$ be the family of all $\pp \in
\Pp$ such that  for all $f\in \Lp$, 
\[ 
\|\mathcal{M}f\|_{L^\pp} \lesssim \|f\|_{L^\pp}.
\]
The norm of $\mathcal{M}$ as a bounded operator on $L^{p(\cdot)}$ will
be denoted by $\norm{\mathcal{M}}{p(\cdot)}.$ A necessary condition
for $\pp \in \B$ is that $p_->1$.    As a consequence, our hypothesis
in Theorems~\ref{thm:KPvariable} and~\ref{thm:extrapolation} that
$\left(\frac{p(\cdot)}{p}\right)',\,\left(\frac{q(\cdot)}{q}\right)'\in\B$
immediately implies that $p_+,\,q_+<\infty$.

A sufficient condition for
$\pp \in \B$ is that $\pp$ is log-H\"older continuous locally: there
exists $C_0>0$ such that
\begin{equation}\label{eq:locallogH}
|p(x) - p(y)| \leq \frac{C_0}{-\log(|x-y|)}, \qquad
|x-y|<\frac{1}{2}; 
\end{equation}
and log-H\"older continuous at infinity:  there exists $p_\infty,\,C_\infty>0$
such that
\begin{equation}\label{eq:inflogH}
 |p(x)-p_\infty| \leq \frac{C_\infty}{\log(e+|x|)}. 
 \end{equation}

 While not strictly necessary to prove
 Theorem~\ref{thm:extrapolation}, we want to note the following
 result, which in practice makes it easier to apply.  Given
 $p(\cdot)\in \mathcal{P}$ with $1<p_{-}\leq p_{+}<\infty$, the
 following statements are equivalent:
\begin{enumerate}
\item $p(\cdot)\in \mathcal{\B},$
\item $p'(\cdot)\in \mathcal{\B},$
\item $p(\cdot)/q\in \mathcal{\B}$ for some $1<q<p_{-},$
\item $(p(\cdot)/q)'\in \mathcal{\B}$ for some $1<q<p_{-}.$
\end{enumerate}

\subsection*{Statement and proof of bilinear extrapolation}

\begin{theorem}\label{thm:extrapolation} 
Let  $\Ff$ be a family of ordered triples of non-negative, measurable
functions defined on $\rn.$ Suppose there are indices $0<p,\, q,\,r<\infty$
satisfying $\frac{1}{r}=\frac{1}{p}+\frac{1}{q}$,  such that for
every $v,w\in A_1$,
\begin{multline}\label{eq:extrapolation1}
\left(\int_\rn h(x)^r v(x)^{\frac{r}{p}}w(x)^{\frac{r}{q}} dx \right)^{\frac{1}{r}} \\
\lesssim  \left(\int_\rn f(x)^{p} v(x) dx \right)^{\frac{1}{p}}
\left(\int_\rn g(x)^{q} w(x) dx \right)^{\frac{1}{q}} 
\end{multline}
for all $(h,f,g)\in \Ff$ such that the lefthand side is finite and
where the implicit constant depends only on $p,$ $q$, $[w]_{A_1}$ and
$[v]_{A_1}$.  Let
$p(\cdot),q(\cdot),r(\cdot)\in \Pp_0(\rn)$ be such that
$\frac{1}{\rr}=\frac{1}{\pp}+\frac{1}{\qq}$, $0<p<p_{-},$ $0<q<q_{-},$ and
$\left(\frac{p(\cdot)}{p}\right)',\,\left(\frac{q(\cdot)}{q}\right)'\in\B.$
Then
\begin{equation}\label{eq:extrapolation2}
\norm{h}{L^{r(\cdot)}} \lesssim \norm{f}{L^{p(\cdot)}}
\norm{g}{L^{q(\cdot)}} 
\end{equation}
for all $(h,f,g)\in\Ff$  such that  $h\in L^{r(\cdot)}$.  The implicit
constant only depends on $\pp$ and $\qq$.
\end{theorem}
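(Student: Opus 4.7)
The plan is to extrapolate via variable Lebesgue duality together with a bilinear Rubio de Francia-type construction. Set $A(\cdot) = p(\cdot)/p$, $B(\cdot) = q(\cdot)/q$, and $C(\cdot) = r(\cdot)/r$; by hypothesis $A_-, B_- > 1$ and therefore $C(\cdot) \geq 1$ a.e. The relation $1/r(\cdot) = 1/p(\cdot) + 1/q(\cdot)$ becomes $1/C(\cdot) = (r/p)/A(\cdot) + (r/q)/B(\cdot)$, and passing to conjugate exponents (using $r/p + r/q = 1$) yields
\[
\frac{1}{C'(\cdot)} = \frac{r/p}{A'(\cdot)} + \frac{r/q}{B'(\cdot)}.
\]
Combining the identity $\|h\|_{L^{r(\cdot)}}^r = \|h^r\|_{L^{C(\cdot)}}$ with Lemma \ref{lemma:dual}, it is enough to establish
\[
\int_{\rn} h^r H \, dx \lesssim \|f\|_{L^{p(\cdot)}}^r \|g\|_{L^{q(\cdot)}}^r
\]
for every nonnegative $H \in L^{C'(\cdot)}$ with $\|H\|_{L^{C'(\cdot)}} \leq 1$.

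The heart of the argument is to decompose $H$ so that each factor can be promoted to an $A_1$ weight controlling $f$ or $g$. Define $\alpha(\cdot) = (r/p) C'(\cdot)/A'(\cdot)$; then $1-\alpha(\cdot) = (r/q) C'(\cdot)/B'(\cdot)$ and $\alpha(\cdot) \in [0,1]$. Set $H_1 = H^{\alpha(\cdot)}$ and $H_2 = H^{1-\alpha(\cdot)}$, so that $H_1 H_2 = H$. The choice of $\alpha(\cdot)$ makes $\alpha(x)(p/r) A'(x) = C'(x)$ pointwise, hence a direct modular computation gives $\rho_{(p/r) A'(\cdot)}(H_1) = \rho_{C'(\cdot)}(H) \leq 1$, and by Lemma \ref{lemma:modular-norm} we obtain $\|H_1\|_{L^{(p/r) A'(\cdot)}} \leq 1$; the analogous bound $\|H_2\|_{L^{(q/r) B'(\cdot)}} \leq 1$ holds similarly. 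Let $\mathcal{R}_1$ and $\mathcal{R}_2$ denote the Rubio de Francia iteration operators associated to $\mathcal{M}$ on $L^{A'(\cdot)}$ and $L^{B'(\cdot)}$ respectively (available precisely because $A'(\cdot), B'(\cdot) \in \B$), and set $v = \mathcal{R}_1(H_1^{p/r})$ and $w = \mathcal{R}_2(H_2^{q/r})$. Standard properties of the iteration yield $v, w \in A_1$ with uniform Muckenhoupt constants, the pointwise estimates $v \geq H_1^{p/r}$ and $w \geq H_2^{q/r}$ (so $v^{r/p} w^{r/q} \geq H$), and the norm bounds $\|v\|_{L^{A'(\cdot)}}^{r/p} \lesssim \|H_1\|_{L^{(p/r) A'(\cdot)}} \leq 1$ together with its analogue for $w$.

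To conclude, apply the weighted bilinear hypothesis \eqref{eq:extrapolation1} with $v, w$, followed by the generalized H\"older inequality (Lemma \ref{lemma:gen-holder}) with exponents $p(\cdot)/p$ and $A'(\cdot)$ (and analogously for the $q$-term):
\[
\int h^r H \, dx \leq \int h^r v^{r/p} w^{r/q} \, dx \lesssim \|f\|_{L^p(v)}^r \|g\|_{L^q(w)}^r \lesssim \|f\|_{L^{p(\cdot)}}^r \|g\|_{L^{q(\cdot)}}^r \|v\|_{L^{A'(\cdot)}}^{r/p} \|w\|_{L^{B'(\cdot)}}^{r/q} \lesssim \|f\|_{L^{p(\cdot)}}^r \|g\|_{L^{q(\cdot)}}^r.
\]
Taking the supremum over such $H$ recovers \eqref{eq:extrapolation2}.

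The main obstacle is pinning down the correct decomposition $H = H^{\alpha(\cdot)} H^{1-\alpha(\cdot)}$: the variable-exponent formula $\alpha(\cdot) = (r/p) C'(\cdot)/A'(\cdot)$ is forced by the requirement that modular identities transfer cleanly to norm bounds, and this balancing step is what distinguishes the bilinear extrapolation from the linear case in \cite{MR2210118}. A minor technical point is that hypothesis \eqref{eq:extrapolation1} only applies when the left-hand side is finite; this is dealt with in the standard Rubio de Francia way, by first restricting $H$ to compact sets where the intermediate weighted integrals are manifestly finite and then passing to the limit.
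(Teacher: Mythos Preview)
Your proof is essentially the same as the paper's: both dualize $\|h^r\|_{L^{r(\cdot)/r}}$, split the test function as $H^{\alpha(\cdot)}H^{1-\alpha(\cdot)}$ with the identical variable exponent $\alpha(\cdot)=(r/p)\,C'(\cdot)/A'(\cdot)$ (the paper calls this $\theta_1$), push each factor through a Rubio de Francia iteration on $L^{A'(\cdot)}$ and $L^{B'(\cdot)}$ to manufacture $A_1$ weights, invoke the hypothesis, and close with the generalized H\"older inequality.

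The one place where you diverge is the finiteness check needed to apply \eqref{eq:extrapolation1}. Your suggestion to ``restrict $H$ to compact sets'' is not quite the right fix, since the weights $v,w$ produced by the Rubio de Francia iteration are global regardless of the support of $H$, so truncating $H$ does not obviously make $\int h^r v^{r/p}w^{r/q}\,dx$ finite. The paper handles this directly: from $1/C(\cdot)+(r/p)/A'(\cdot)+(r/q)/B'(\cdot)=1$ and the three-factor H\"older inequality (Lemma~\ref{lemma:gen-holder}) one gets
\[
\int_{\rn} h^r v^{r/p} w^{r/q}\,dx \lesssim \|h^r\|_{L^{C(\cdot)}}\,\|v\|_{L^{A'(\cdot)}}^{r/p}\,\|w\|_{L^{B'(\cdot)}}^{r/q} < \infty,
\]
using exactly the norm bounds $\|v\|_{L^{A'(\cdot)}},\|w\|_{L^{B'(\cdot)}}\lesssim 1$ you already established and the standing assumption $h\in L^{r(\cdot)}$. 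You have all the ingredients for this; just replace your truncation remark with this one-line H\"older estimate.
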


\begin{proof} 
  Set $\bar{r}(\cdot)=\frac{r(\cdot)}{r},$
  $\bar{p}(\cdot)=\frac{p(\cdot)}{p}$ and
  $\bar{q}(\cdot)=\frac{q(\cdot)}{q}$.   Then by our hypotheses,
  $1<\bar{p}_- \leq \bar{p}_+<\infty$, and the same is true for
  $\bar{r}(\cdot)$ and $\bar{q}(\cdot)$.

We first define two Rubio de
  Francia iteration algorithms:   given a non-negative function $\tau$,
\begin{equation*}
\Rf_1 \tau(x) = \sum_{k=0}^\infty \frac{\mathcal{M}^k \tau(x)}{2^k
  \|\mathcal{M}\|_{\cbp}^k}, \quad\quad
\Rf_2 \tau(x) = \sum_{k=0}^\infty \frac{\mathcal{M}^k \tau(x)}{2^k
  \|\mathcal{M}\|_{\cbq}^k}. 
\end{equation*}
Then 
$\tau(x) \leq \Rf_1 \tau(x)$ and, since  $\mathcal{M}$ is bounded on $L^\cbp$, we have that  $\|\Rf_1 \tau \|_\cbp\leq
2\|\tau\|_\cbp$ and $\Rf_1 \tau \in A_1$ with $[\Rf_1 \tau]_{A_1}
\leq 2 \|\mathcal{M}\|_{\cbp}$.  (See~\cite[Theorem~5.24]{MR3026953}
for details.)  The same is true for $\mathcal{R}_2\tau$ with $\bp$ replaced by
$\bq$ everywhere.

Now fix a triple $(h,f,g)\in \Ff$
  such that $\|h\|_{L^\rr} < \infty$.  Then by Lemma~\ref{lemma:dual},
\[
\norm{h}{L^{r(\cdot)}}^r=\norm{h^r}{L^{\bar{r}(\cdot)}}\approx \sup\int_{\rn}h(x)^r\tau(x)\,dx,
\]
where the supremum is taken over all non-negative functions $\tau\in
L^{\bar{r}'(\cdot)}$ with $\norm{\tau}{L^{\bar{r}'(\cdot)}}=1.$
Therefore, it is enough to prove that for all such $\tau,$
\[
\int_{\rn}h(x)^r\tau(x)\,dx\lesssim \norm{f}{L^{p(\cdot)}}^r \norm{g}{L^{q(\cdot)}}^r,
\]
where the implicit constant is independent of $\tau.$ 

Define functions $\theta_1(\cdot),\,\theta_2(\cdot)$ by
\[ \theta_1(\cdot) = \frac{r\cbr}{p\cbp}, \qquad 
\theta_2(\cdot) = \frac{r\cbr}{q\cbq}. \]
Then for all $x$,
\[ \theta_1(x)+\theta_2(x) 
= \bar{r}'(x)\left( \frac{r}{p}
  \frac{p(x)-p}{p(x)}+\frac{r}{q}\frac{q(x)-q}{q(x)}\right) 
= \bar{r}'(x) \left( \frac{r}{p}+\frac{r}{q}-\frac{r}{r(x)}\right) =
1. \]
Hence, by the properties of the iteration algorithms,
\begin{multline*}
\int_{\rn}h(x)^r\tau(x)\,dx \\
= \int_\rn h(x)^r \tau(x)^{\theta_1(x)} \tau(x)^{\theta_2(x)}\,dx 
\le\int_{\rn}h(x)^r \Rf_1(\tau^{\frac{\cbr}{\cbp}})
(x)^{\frac{r}{p}}
\Rf_2(\tau^{\frac{\cbr}{\cbq}})(x)^{\frac{r}{q}}\,dx.
\end{multline*}
We claim that the righthand side of this inequality is finite.  To see
this, first note that by the computation above for
$\theta_1+\theta_2$, we have that
\[1= \frac{1}{\br}+\frac{1}{\cbr}
=
\frac{1}{\br}+\frac{r}{p}\frac{1}{\cbp}+\frac{r}{q}\frac{1}{\cbq}. \]
Therefore, by Lemma~\ref{lemma:gen-holder},
\begin{align*}
& \int_{\rn}h(x)^r \Rf_1(\tau^{\frac{\cbr}{\cbp}})
(x)^{\frac{r}{p}}
\Rf_2(\tau^{\frac{\cbr}{\cbq}})(x)^{\frac{r}{q}}\,dx \\
& \qquad \lesssim \|h^r\|_{L^\br} 
\|\Rf_1(\tau^{\frac{\cbr}{\cbp}})^{\frac{r}{p}}\|_{L^{\frac{p}{r}\cbp}}
\|\Rf_2(\tau^{\frac{\cbr}{\cbq}})^{\frac{r}{q}}\|_{L^{\frac{q}{r}\cbq}} \\
&\qquad \lesssim \|h\|^r_{L^\rr} 
\|\Rf_1(\tau^{\frac{\cbr}{\cbp}})\|^{\frac{r}{p}}_{L^\cbp}
\|\Rf_2(\tau^{\frac{\cbr}{\cbq}})\|^{\frac{r}{q}}_{L^\cbq} \\
&\qquad \lesssim \|h\|_{L^\rr}^r 
\|\tau^{\frac{\cbr}{\cbp}}\|^{\frac{r}{p}}_{L^\cbp}
\|\tau^{\frac{\cbr}{\cbq}}\|^{\frac{r}{q}}_{L^\cbq}. 
\end{align*}
Since $\|\tau\|_{L^\cbr} =1$, by Lemma~\ref{lemma:modular-norm} we have that
\[ 1\geq \int_\rn \tau(x)^{\bar{r}'(x)}\,dx 
= \int_\rn
\left(\tau(x)^{\frac{\bar{r}'(x)}{\bar{p}'(x)}}\right)^{\bar{p}'(x)}\,dx, \]
which again by Lemma~\ref{lemma:modular-norm}, since $(\bar{p}')_+<\infty$, implies that $\|\tau^{\frac{\cbr}{\cbp}}\|_{L^\cbp}\leq 1$.
Similarly, we have that $\|\tau^{\frac{\cbr}{\cbq}}\|_{L^\cbq}\leq 1$.
Therefore, since $\|h\|_{L^\rr} <\infty$, it follows that the righthand
side is finite.  

Given this, and given that $\Rf_1(\tau^{\frac{\cbr}{\cbp}}),\,
\Rf_2(\tau^{\frac{\cbr}{\cbq}})\in A_1$ with $A_1$
characteristics independent of $\tau$,  we can apply our hypothesis to
get 
\begin{multline*}
\int_{\rn}h(x)^r \Rf_1(\tau^{\frac{\cbr}{\cbp}})
(x)^{\frac{r}{p}}
\Rf_2(\tau^{\frac{\cbr}{\cbq}})(x)^{\frac{r}{q}}\,dx \\
\lesssim \left(\int_{\rn}f(x)^p
  \Rf_1(\tau^{\frac{\cbr}{\cbp}})(x)\,dx\right)^{\frac{r}{p}}
 \left(\int_{\rn}g(x)^q \Rf_2(\tau^{\frac{\cbr}{\cbq}})
   (x)\,dx\right)^{\frac{r}{q}}. 
\end{multline*}
To estimate the first integral on the righthand side we apply
Lemma~\ref{lemma:gen-holder}:
\begin{multline*}
\int_{\rn}f(x)^p  \Rf_1(\tau^{\frac{\cbr}{\cbp}})(x)\,dx \\
 \lesssim \|f^p\|_{L^\bp} \|\Rf_1(\tau^{\frac{\cbr}{\cbp}})\|_{L^\cbp} 
 \lesssim \|f\|_{L^\pp}^p \|\tau^{\frac{\cbr}{\cbp}}\|_{L^\cbp} 
\lesssim \|f\|_{L^\pp}^p. 
\end{multline*}
In exactly the same way we have that the second integral is bounded by 
$\|g\|_\qq^q$.  If we combine all of the above estimates, we get the
desired inequality and this completes the proof.
\end{proof}

\section{Further applications of Theorem~\ref{thm:extrapolation}}\label{sec:applextrap}

In this section we show that Theorem~\ref{thm:extrapolation}
implies boundedness properties in variable Lebesgue spaces for a
variety of bilinear operators.

In order to apply Theorem~\ref{thm:extrapolation} to a bilinear
operator $T$, the corresponding family $\Ff$ will consist of triples
of the form $(|T(f,g)|,|f|,|g|)$, where $f,\,g$ are chosen from the
domain of $T$ or some appropriate (dense) subset of the domain.  To
insure {\em a priori} that the assumption
$\|T(f,g)\|_{L^{r(\cdot)}}<\infty$ holds, it will suffice to replace
$|T(f,g)|$ in each triple by $\min(|T(f,g)|,N) \chi_{B(0,N)} $ and
then apply the the monotone convergence theorem for variable Lebesgue
spaces (see~\cite[Theorem~2.59]{MR3026953}).

\subsection*{Bilinear Calder\'on-Zygmund operators in variable
  Lebesgue spaces}\label{sec:cz}
Let $K(x,y,z)$ be a complex-valued, locally integrable function on
$\re^{3n}\setminus \triangle,$ where $\triangle=\{(x,x,x):x\in\rn\}.$
$K$ is a Calder\'on-Zygmund kernel if there exist $A>0$ and $\delta>0$
such that for all $(x,y,z)\in \re^{3n}\setminus \triangle$,
\begin{equation*}\label{size}
\abs{K(x, y, z)} \le \frac{A}{(\abs{x-y}+\abs{x-z}+\abs{y-z})^{2n}}
\end{equation*}
and 
\begin{equation*}
\abs{K(x, y, z) - K(\tilde{x}, y, z)} \le  \frac{A\,\abs{x-\tilde{x}}^{\delta}}{(\abs{x-y}+\abs{x-z}+\abs{y-z})^{2n+\delta}}
\end{equation*}
whenever
$\abs{x - \tilde{x}} \leq \frac{1}{2} \max ( \abs{x-z}, \abs{x-y})$.
We also assume that the two analogous difference estimates with respect to the variables $y$ and
$z$ hold. An operator $T,$ continuous from $\sw\times \sw$ into $\swp,$ is
a bilinear Calder\'on-Zygmund operator if it satisfies
two conditions: 
\begin{enumerate}

\item there exists a bilinear Calder{\'o}n-Zygmund kernel
$K$ such that
    \begin{equation*}
    T(f,g)(x) = \int_{\re^{2n}} K(x, y, z) f(y) g(z) \,dy \, dz
    \end{equation*}
for all $f,\,g \in C_c^\infty(\re^n)$ and all $x \notin \text{supp}(f) \cap \text{supp}(g);$

\item  there exist $1\le p,q, r<\infty$  such that $\frac{1}{r}=\frac{1}{p}+\frac{1}{q}$ and $T$ can be extended to a  bounded operator from $L^{p} \times L^{q}$ into $L^r.$ 
\end{enumerate}

Bilinear Calder\'on-Zygmund operators enjoy boundedness properties in
various function spaces. We refer the reader
to~\cite{MR1880324,MR2483720} and the references they contain for more
information on bilinear \cz theory.  To apply
Theorem~\ref{thm:extrapolation} we need a weighted norm inequality
from~\cite [Corollary 8.2]{MR2030573} (see also \cite{MR1947875}).
This is not the best result known, but it enough for our purposes; we
refer the reader to~\cite[Corollary 3.9]{MR2483720} for further
results in the weighted setting.

\begin{thm} \label{thm:czweighted} 
Let $T$ be a bilinear  \cz operator.  Given $1< p,q< \infty,$
$\frac{1}{2}< r<\infty,$  $\frac{1}{r}=\frac{1}{p}+\frac{1}{q},$
suppose $v\in A_{p}$ and $w\in A_q.$ Then for all $ f\in L^p(v),g\in L^q(w),$
\[
\norm{T(f,g)}{L^r(v^{\frac{r}{p}}w^{\frac{r}{q}})}\lesssim \norm{f}{L^p(v)}\norm{g}{L^q(w)},
\]
where the implicit constant depends only on $p,$ $q$, $[v]_{A_p}$,
$[w]_{A_q}$ and the size of the constants for the kernel of $T.$
\end{thm}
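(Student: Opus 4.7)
The plan is to follow the standard P\'erez--Torres approach to weighted bilinear \cz theory: first establish a pointwise bound for a $\delta$-sharp maximal function of $T(f,g)$, then convert it to a weighted norm inequality via P\'erez's weighted Fefferman--Stein inequality, and finally invoke H\"older and the linear Muckenhoupt theorem for $\mathcal{M}$.

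First, I would prove the pointwise estimate that for every $0<\delta<1/2$,
\[ M^\#_\delta(T(f,g))(x) \lesssim \mathcal{M}f(x)\,\mathcal{M}g(x), \]
where $M^\#_\delta(F) = \bigl[M^\#(|F|^\delta)\bigr]^{1/\delta}$. Fix a ball $B\ni x$ and decompose $f=f_1+f_2$ and $g=g_1+g_2$ with $f_1=f\chi_{2B}$ and $g_1=g\chi_{2B}$, which splits $T(f,g)$ into four pieces $T(f_i,g_j)$. The fully local piece $T(f_1,g_1)$ is handled by Kolmogorov's inequality together with the endpoint weak-type bound $T:L^1\times L^1 \to L^{1/2,\infty}$; that bound follows from hypothesis (2) and the multilinear Calder\'on--Zygmund decomposition of Grafakos--Torres. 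For the three off-diagonal pieces, at least one of the two arguments is supported outside $2B$, and the size and smoothness conditions on the kernel allow one to estimate the oscillation of each piece over $B$ directly by $\mathcal{M}f(x)\,\mathcal{M}g(x)$.

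Next, I would invoke P\'erez's weighted sharp maximal function inequality: for every $w_0\in A_\infty$ and every function $F$ whose $L^r(w_0)$ norm is a priori finite,
\[ \|F\|_{L^r(w_0)} \lesssim \|M^\#_\delta F\|_{L^r(w_0)}. \]
I would check that $w_0 = v^{r/p}w^{r/q}\in A_\infty$; this holds because $v\in A_p\subset A_\infty$ and $w\in A_q\subset A_\infty$, and $A_\infty$ is closed under positive powers and products. Combining this with the first step,
\[ \|T(f,g)\|_{L^r(v^{r/p}w^{r/q})} \lesssim \|\mathcal{M}f\,\mathcal{M}g\|_{L^r(v^{r/p}w^{r/q})}. \]
H\"older's inequality, which applies because $\frac{1}{r}=\frac{1}{p}+\frac{1}{q}$, bounds the right-hand side by $\|\mathcal{M}f\|_{L^p(v)}\|\mathcal{M}g\|_{L^q(w)}$, and then the Muckenhoupt theorem for $\mathcal{M}$ on $L^p(v)$ and $L^q(w)$ gives the desired estimate with constants depending only on $p$, $q$, $[v]_{A_p}$, $[w]_{A_q}$, and the kernel constants of $T$.

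The main obstacle is the pointwise sharp maximal estimate in the first step: it requires careful treatment of the four bilinear regions and the quantitative use of the kernel regularity, and the fully local piece cannot be handled without the $L^1\times L^1\to L^{1/2,\infty}$ endpoint. A secondary technical point is justifying P\'erez's inequality for $F=T(f,g)$: one must first establish the a priori finiteness of $\|F\|_{L^r(w_0)}$, which is standardly done by truncating $f$ and $g$ to compactly supported bounded functions and passing to the limit by monotone convergence.
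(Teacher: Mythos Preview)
The paper does not prove this theorem; it is quoted from \cite[Corollary~8.2]{MR2030573} (see also \cite{MR1947875}) and used as a black box, so there is no in-paper proof to compare against.

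Your outline is one of the standard routes to this result---essentially the sharp-maximal approach of P\'erez--Torres and of \cite{MR2483720}---and it is correct in structure. The pointwise estimate $M^\#_\delta(T(f,g))\lesssim \mathcal{M}f\cdot\mathcal{M}g$, the weighted Fefferman--Stein inequality for $A_\infty$ weights (valid for all $0<r<\infty$ under the a~priori finiteness you note), H\"older, and the scalar Muckenhoupt theorem combine as you describe. The cited source \cite{MR2030573} instead argues via bilinear Rubio de Francia extrapolation from the unweighted boundedness; your approach is more direct and yields the extra pointwise sharp-maximal control.

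One step does need repair: the assertion that ``$A_\infty$ is closed under positive powers and products'' is false as stated (for instance $|x|^{-n+\varepsilon}\in A_\infty$ on $\rn$, but its square is not even locally integrable). What you actually need, and what is true, is log-convexity of the $A_s$ classes: since $v\in A_p\subset A_{\max(p,q)}$, $w\in A_q\subset A_{\max(p,q)}$, and $\frac{r}{p}+\frac{r}{q}=1$ with both exponents in $(0,1)$, H\"older's inequality applied to the $A_{\max(p,q)}$ condition gives $v^{r/p}w^{r/q}\in A_{\max(p,q)}\subset A_\infty$, with constant controlled by $[v]_{A_p}$ and $[w]_{A_q}$. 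With this correction your argument goes through.
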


As an immediate consequence of Theorem~\ref{thm:czweighted} and
Theorem~\ref{thm:extrapolation} we get the boundedness
of bilinear Calder\'on-Zygmund operators in variable
Lebesgue spaces.

\begin{corollary}\label{coro:czvariable}
Let
$p(\cdot),q(\cdot),r(\cdot)\in \Pp_0(\rn)$ satisfy
$\frac{1}{\rr}=\frac{1}{\pp}+\frac{1}{\qq}$, $p_{-}>1,$ $q_{-}>1,$
$r_{-}>\frac{1}{2}$.  Suppose further that there exist $1<p<p_{-}$ and $1<q<q_{-}$ such that
$\left(\frac{p(\cdot)}{p}\right)',\,\left(\frac{q(\cdot)}{q}\right)'\in\B.$
If $T$ is a bilinear Calder\'on-Zygmund operator,  then for all $ f\in  L^{p(\cdot)},\, g\in L^{q(\cdot)}$,
\begin{equation}\label{eq:extrapolation2}
\norm{T(f,g)}{L^{r(\cdot)}} \lesssim \norm{f}{L^{p(\cdot)}}
\norm{g}{L^{q(\cdot)}}.
\end{equation}
\end{corollary}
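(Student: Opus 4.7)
The plan is to deduce this corollary as a direct consequence of the bilinear extrapolation Theorem~\ref{thm:extrapolation} applied to the weighted estimate Theorem~\ref{thm:czweighted}. The hypotheses on $p(\cdot),q(\cdot),r(\cdot)$ match exactly what the extrapolation theorem requires, so the bulk of the work is setting up the right family $\mathcal{F}$ of triples and handling the \emph{a priori} finiteness assumption on $\|h\|_{L^{r(\cdot)}}$ that appears in the statement of Theorem~\ref{thm:extrapolation}.

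First, I would choose indices $1<p<p_{-}$ and $1<q<q_{-}$ as given by the hypothesis, and set $\frac{1}{r}=\frac{1}{p}+\frac{1}{q}$, so in particular $\frac{1}{2}<r<\infty$. For these fixed $p,q,r$, Theorem~\ref{thm:czweighted} yields
\[
\|T(f,g)\|_{L^{r}(v^{r/p}w^{r/q})} \lesssim \|f\|_{L^{p}(v)}\|g\|_{L^{q}(w)}
\]
for every $v\in A_{p}$ and $w\in A_{q}$; since $A_{1}\subset A_{p}\cap A_{q}$, the hypothesis \eqref{eq:extrapolation1} of Theorem~\ref{thm:extrapolation} holds whenever $v,w\in A_{1}$ (with constants controlled by $[v]_{A_{1}}$ and $[w]_{A_{1}}$).

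Next, to apply extrapolation I would take
\[
\Ff_N=\bigl\{\bigl(\min(|T(f,g)|,N)\chi_{B(0,N)},\,|f|,\,|g|\bigr): f,g\in C_c^{\infty}(\rn)\bigr\},
\]
for each $N\in\na$. Every triple in $\Ff_{N}$ satisfies the weighted estimate trivially by monotonicity (the first entry is pointwise bounded by $|T(f,g)|$ and is itself an $L^{r}$-function with compact support, so all weighted $L^{r}$ norms involved are finite), and the first entry belongs to $L^{r(\cdot)}$ because it is a bounded function of compact support. Theorem~\ref{thm:extrapolation} then gives
\[
\bigl\|\min(|T(f,g)|,N)\chi_{B(0,N)}\bigr\|_{L^{r(\cdot)}}\lesssim \|f\|_{L^{p(\cdot)}}\|g\|_{L^{q(\cdot)}},
\]
with a constant independent of $N$, $f$, and $g$.

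Letting $N\to\infty$ and applying the monotone convergence theorem for variable Lebesgue spaces (\cite[Theorem 2.59]{MR3026953}) yields \eqref{eq:extrapolation2} for all $f,g\in C_c^{\infty}(\rn)$. To conclude for arbitrary $f\in L^{p(\cdot)}$ and $g\in L^{q(\cdot)}$, I would approximate by $C_c^{\infty}$ functions in the variable Lebesgue norms (permissible since $p_{+},q_{+}<\infty$, as noted in Section~\ref{sec:variable}) and invoke the bilinearity of $T$ together with the just-established variable-norm bound on the dense subset. The only subtle point — and the step where care is needed — is precisely this a priori finiteness reduction via the truncation $\min(|T(f,g)|,N)\chi_{B(0,N)}$; once that is in place, the rest is a direct invocation of Theorems~\ref{thm:czweighted} and~\ref{thm:extrapolation}.
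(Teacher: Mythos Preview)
Your proposal is correct and follows essentially the same approach as the paper. The paper treats the corollary as an immediate consequence of Theorems~\ref{thm:czweighted} and~\ref{thm:extrapolation}, and the truncation device $\min(|T(f,g)|,N)\chi_{B(0,N)}$ together with monotone convergence in $L^{r(\cdot)}$ is exactly the mechanism the paper describes (at the start of Section~\ref{sec:applextrap}) for handling the \emph{a priori} finiteness condition; your additional density step via $C_c^\infty$ is a harmless elaboration, since the paper's formulation allows the family $\Ff$ to be built directly from $f\in L^{p(\cdot)}$, $g\in L^{q(\cdot)}$ (the weighted hypothesis \eqref{eq:extrapolation1} then holds trivially whenever a right-hand norm is infinite).
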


\begin{remark}  
The  boundedness of bilinear Calder\'on-Zygmund operators in
  variable Lebesgue spaces was first proved in \cite[Corollary
  2.1]{MR2606534} and \cite[Theorem 3.1]{MR3215477}. We improve upon
  both of these results:  the former
  requires the additional hypothesis $\frac{r(\cdot)}{r}\in\B$ for
  some $0<r<r_{-}$ while the latter assumes $r(\cdot)\in \B.$ In both
  cases, the proofs use linear extrapolation in the scale of
  variable Lebesgue spaces.
\end{remark}

\subsection*{Bilinear multiplier operators in variable Lebesgue space.}
Examples of bilinear Calder\'on-Zygmund operators are  Coifman-Meyer multiplier operators (see \cite{MR518170, MR1880324}). Such operators are of the form 
\[
T_\sigma(f,g)(x)=\int_{\rtn}\sigma(\xi,\eta)\widehat{f}(\xi)\widehat{g}(\eta)\eixxe\dxe,\quad x\in\rn,
\]
where $\sigma$ is a complex-valued, smooth function defined for $\xi,\eta\in\rn,$ called a (bilinear) symbol or multiplier, that satisfies
\begin{equation} \label{eq:CMcondition}
\abs{\partial_\xi^\beta\partial_\eta^\gamma\sigma(\xi,\eta)}\lesssim (\abs{\xi}+\abs{\eta})^{-\abs{\beta+\gamma}} \qquad \xi,\eta\in \rn, (\xi,\eta)\neq (0,0),
\end{equation}
 for all multi-indices $\beta,\gamma\in \na_0^n$ with
 $\abs{\beta+\gamma}\le 2n+1.$ In particular, these operators satisfy
 the weighted estimates of Theorem~\ref{thm:czweighted} and the
 variable Lebesgue space estimates in Corollary~\ref{coro:czvariable}.

 If \eqref{eq:CMcondition} is only satisfied for
 $\abs{\beta+\gamma}\le L,$ where $L<2n+1,$ then $T_\sigma$ may fail
 to be a bilinear Calder\'on-Zygmund operator. However, weighted
 estimates like those given in Theorem~\ref{thm:czweighted} do hold
 true for some bilinear multiplier operators with rougher symbols
 whose regularity is measured in terms of Sobolev norms. Consequently,
 such operators are also bounded in variable Lebesgue spaces. 

We state such results precisely.
Given $s,t\in\re,$  the Sobolev space $H^{(s,t)}(\rtn)$ consists of all $F\in \mathcal{S}'(\rtn)$ such that 
\[
\norm{F}{H^{(s,t)}}=\left(\int_{\rtn} (1+\abs{\tau_1}^2)^s(1+\abs{\tau_2}^2)^t |{\widehat{F}}(\tau_1,\tau_2)|^2\, d\tau_1 d\tau_2 \right)^{\frac{1}{2}}<\infty.
\]
 It follows  that $H^{s}(\rtn)\subset H^{(\frac{s}{2},\frac{s}{2})}(\rtn)$ for $s\ge 0,$  where $H^{s}(\rtn)$ is the Sobolev space consisting of $F\in L^2(\rtn)$ such that 
\[
\left(\int_{\rtn} (1+\abs{(\tau_1,\tau_2)}^2)^s |{\widehat{F}}(\tau_1,\tau_2)|^2\, d\tau_1 d\tau_2 \right)^{\frac{1}{2}}<\infty.
\]

Fix $\Psi\in\mathcal{S}(\rtn)$ such that
$\text{supp}(\Psi)\subset \{(\xi,\eta)\in\rtn:\frac{1}{2}\le
\abs{(\xi,\eta)}\le 2\}$
and $\sum_{k\in\ent}\Psi(2^{-k}\xi,2^{-k}\eta)=1$ for
$(\xi,\eta)\neq (0,0).$ Given a complex-valued, bounded function
$\sigma$ defined on $\rtn,$ set
$\sigma_k(\xi,\eta)=\sigma(2^k\xi,2^k\eta)\Psi(\xi,\eta)$ for
$\xi,\eta\in\rn$ and $k\in\ent.$ The following result from~\cite[Theorem 6.2]{MR2958938} is a weighted
version of a H\"ormander type theorem for bilinear Fourier
multipliers.
 
\begin{thm}\label{thm:fijutatomita} 
  Let $1<p,q<\infty,$ $\frac{1}{2}<r<\infty,$
  $\frac{1}{r}=\frac{1}{p}+\frac{1}{q}$ and $\frac{n}{2}<s,t\le n.$
  Assume $p>\frac{n}{s},$ $q>\frac{n}{t},$ $v\in A_{\frac{ps}{n}},$
  $w\in A_{\frac{qt}{n}}$ and $\sigma(\xi,\eta)$ is a complex-valued,
  bounded function defined for $\xi,\eta\in\rn$ such that
  $\sup_{k\in\ent}\norm{\sigma_k}{H^{(s,t)}}<\infty.$ Then for all
  $f\in L^p(v),\, g\in L^q(w)$,
\[
\norm{T_\sigma(f,g)}{L^r(v^{\frac{r}{p}}w^{\frac{r}{q}})}\lesssim \norm{f}{L^p(v)}\norm{g}{L^q(w)},
\]
where the implicit constant depends only on $p,$ $q,$
$[v]_{A_{\frac{ps}{n}}}$, $[w]_{A_{\frac{qt}{n}}}$  and $\sigma.$
\end{thm}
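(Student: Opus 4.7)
The plan is to adapt the Fujita--Tomita argument for bilinear multipliers with limited Sobolev regularity, using the weighted square-function estimate of Theorem~\ref{thm:sqweighted} as the critical ingredient that replaces the unweighted estimate used in the original proof.

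Step one is a Littlewood--Paley decomposition of the symbol,
\[
\sigma(\xi,\eta) = \sum_{k\in\ent}\sigma(\xi,\eta)\Psi(2^{-k}\xi,2^{-k}\eta) =: \sum_{k\in\ent}\tilde\sigma_k(\xi,\eta),
\]
so that $T_\sigma = \sum_k T_{\tilde\sigma_k}$, each piece having Fourier support in an annulus of scale $2^k$.  Step two is to expand the rescaled symbol $\sigma_k(\xi,\eta)=\sigma(2^k\xi,2^k\eta)\Psi(\xi,\eta)$ in a Fourier series on a cube containing its support,
\[
\sigma_k(\xi,\eta) = \sum_{m_1,m_2\in\zn} c_{k,m_1,m_2}\,e^{2\pi i(m_1\cdot\xi+m_2\cdot\eta)}.
\]
The product Sobolev hypothesis gives $\{(1+|m_1|)^{s}(1+|m_2|)^{t}c_{k,m_1,m_2}\}_{m_1,m_2}\in\ell^2(\zn\times\zn)$ uniformly in $k$, with $\ell^2$-norm controlled by $\|\sigma_k\|_{H^{(s,t)}}$.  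Substituting back, and introducing auxiliary Schwartz functions $\Theta^1,\Theta^2$ whose Fourier transforms equal $1$ on the relevant projections of $\supp\Psi$ and are supported in annuli, yields
\[
T_{\tilde\sigma_k}(f,g)(x) = \sum_{m_1,m_2} c_{k,m_1,m_2}\,(\Theta^{1,\bar z}_{k,m_1}\ast f)(x)\,(\Theta^{2,\bar z}_{k,m_2}\ast g)(x),
\]
with translations $z_{k,m_j}=2^{-k}m_j$ exactly matching the setting of Theorem~\ref{thm:sqweighted}.

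The third step uses the almost-disjoint frequency supports of $T_{\tilde\sigma_k}(f,g)$ in the outer variable, together with Proposition~\ref{lem:squareestimate} applied to the weight $v^{r/p}w^{r/q}$ (which lies in $A_r$ when $r>1$ by the factored-weight theory of \cite{MR2030573}), to reduce matters to
\[
\|T_\sigma(f,g)\|_{L^r(v^{r/p}w^{r/q})} \lesssim \Bigl\|\bigl(\textstyle\sum_k|T_{\tilde\sigma_k}(f,g)|^2\bigr)^{1/2}\Bigr\|_{L^r(v^{r/p}w^{r/q})}.
\]
Inside the square function, I would use Minkowski in $(m_1,m_2)$ and Cauchy--Schwarz in $k$ to split the product, and then Hölder in $L^r(v^{r/p}w^{r/q})$ to distribute the weights into $L^p(v)\times L^q(w)$.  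This leaves the $f$-factor as a translated $\ell^2$ square function, controlled uniformly in $m_1$ by Theorem~\ref{thm:sqweighted} (using $v\in A_{ps/n}\subset A_p$), and the $g$-factor as a supremum in $k$ of translated Littlewood--Paley pieces, controlled by a weighted Peetre-type maximal function whose $L^q(w)$ bound requires the strengthened hypothesis $w\in A_{qt/n}$ (since the natural Peetre parameter is $n/t$ and $\mathcal{M}$ must be bounded on $L^{qt/n}(w)$).  A symmetric version of the split treats $g$ via the square function and $f$ via a Peetre maximal; combining the two via Cauchy--Schwarz against the $\ell^2$ decay of $c_{k,m_1,m_2}$ gives summability over $(m_1,m_2)\in\zn\times\zn$ precisely at the threshold $s,t>n/2$.

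The main obstacles are two.  First, making the disjoint-frequency-support reassembly step rigorous in the quasi-Banach regime $\frac{1}{2}<r\le 1$, where Proposition~\ref{lem:squareestimate} does not apply directly and one must instead pass through bilinear Hardy-space tools or raise the estimate to a sufficiently large power to recover a Banach setting.  Second, balancing the Peetre maximal and square-function controls on the two factors so as to obtain the sharp Sobolev threshold $s,t>n/2$: the Peetre maximal produces a polynomial growth $(1+|m_j|)^{n/s}$ or $(1+|m_j|)^{n/t}$ in the translation index that must be exactly absorbed by the $\ell^2$-decay of the Fourier coefficients.  The weight conditions $v\in A_{ps/n}$ and $w\in A_{qt/n}$ are precisely calibrated to this balance, which is why these specific classes appear rather than the plain $A_p$, $A_q$.
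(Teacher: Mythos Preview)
The paper does not prove this statement: it is quoted as a known result, namely \cite[Theorem~6.2]{MR2958938} (Fujita--Tomita), and is used only as input to the bilinear extrapolation Theorem~\ref{thm:extrapolation} in order to obtain Corollary~\ref{cor:multiplier}.  So there is no ``paper's own proof'' to compare against.

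Your sketch is in the spirit of the Fujita--Tomita argument, but two points deserve comment.  First, the original proof in \cite{MR2958938} is already weighted from the outset; there is no unweighted square-function estimate that needs to be upgraded via Theorem~\ref{thm:sqweighted}.  In fact Theorem~\ref{thm:sqweighted} is a new contribution of the present paper, developed for the Kato--Ponce application, and plays no role here.  Fujita--Tomita instead control the translated Littlewood--Paley pieces by weighted Peetre-type maximal functions, whose $L^p(v)$ boundedness is exactly what forces the hypothesis $v\in A_{ps/n}$ rather than $v\in A_p$.  Second, the obstacle you flag in the quasi-Banach regime $\tfrac{1}{2}<r\le 1$ is genuine: Proposition~\ref{lem:squareestimate} requires $1<r<\infty$ and the product weight in $A_r$, neither of which is available here.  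The Fujita--Tomita proof avoids this by a different reassembly mechanism (involving weighted vector-valued estimates and a duality/bilinear interpolation step), so your proposed route through Proposition~\ref{lem:squareestimate} would not close as stated in that range.
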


As an immediate consequence of Theorem~\ref{thm:fijutatomita} and
Theorem~\ref{thm:extrapolation} we get the boundedness
of bilinear multipliers in variable
Lebesgue spaces.

\begin{corollary} \label{cor:multiplier}
Given $\frac{n}{2}<s,t\le n,$ let
$p(\cdot),q(\cdot),r(\cdot)\in \Pp_0(\rn)$ satisfy
$\frac{1}{\rr}=\frac{1}{\pp}+\frac{1}{\qq}$, $p_{-}>\frac{n}{s},$ $q_{-}>\frac{n}{t},$
$r_{-}>\frac{1}{2}$  and assume there exist $\frac{n}{s}<p<p_{-}$ and $\frac{n}{t}<q<q_{-}$ such that
$\left(\frac{p(\cdot)}{p}\right)',\,\left(\frac{q(\cdot)}{q}\right)'\in\B.$
If $\sigma(\xi,\eta)$ is a complex-valued, bounded function defined
for $\xi,\eta\in\rn$ such that
$\sup_{k\in\ent}\norm{\sigma_k}{H^{(s,t)}}<\infty$,  then for all $f\in
L^{p(\cdot)}, \, g\in L^{q(\cdot)} $,
\begin{equation*}
\norm{T_{\sigma}(f,g)}{L^{r(\cdot)}} \lesssim \norm{f}{L^{p(\cdot)}}
\norm{g}{L^{q(\cdot)}}.
\end{equation*}
\end{corollary}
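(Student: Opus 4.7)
The plan is to apply the bilinear extrapolation theorem (Theorem~\ref{thm:extrapolation}) to a family $\Ff$ of truncated triples associated with $T_\sigma$, using Theorem~\ref{thm:fijutatomita} as the weighted input. Concretely, let $\Ff$ consist of all triples
\[
\bigl(\min(|T_\sigma(f,g)|, N)\,\chi_{B(0,N)},\, |f|,\, |g|\bigr), \qquad f,g \in \sw,\ N\in\nn,
\]
and take as extrapolation indices exactly the $p$ and $q$ supplied in the hypothesis, together with $r$ defined by $\frac{1}{r}=\frac{1}{p}+\frac{1}{q}$. Note that $r$ is positive and finite, and the assumptions $p<p_-$, $q<q_-$, $\left(\frac{p(\cdot)}{p}\right)',\left(\frac{q(\cdot)}{q}\right)'\in\B$ match the hypotheses of Theorem~\ref{thm:extrapolation} verbatim.

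Next I would verify the weighted estimate \eqref{eq:extrapolation1} for this choice of $\Ff$. Since $p>n/s$ and $q>n/t$, we have $ps/n>1$ and $qt/n>1$, hence $A_1\subset A_{ps/n}$ and $A_1\subset A_{qt/n}$, with the characteristics of any $v,w\in A_1$ controlled by $[v]_{A_1}$ and $[w]_{A_1}$. Theorem~\ref{thm:fijutatomita} then yields the required weighted inequality for the untruncated operator $|T_\sigma(f,g)|$; since truncation only decreases the left-hand side pointwise, the same bound persists for $\min(|T_\sigma(f,g)|,N)\,\chi_{B(0,N)}$, with constants depending only on $p,q,[v]_{A_1},[w]_{A_1}$ and $\sigma$. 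Schwartz functions belong to $L^p(v)$ and $L^q(w)$ for all $v,w\in A_1$, so the weighted hypothesis of Theorem~\ref{thm:extrapolation} is legitimately available on $\Ff$.

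Finally I would remove the truncation. Each truncated $h$ is bounded and compactly supported, hence belongs a priori to $L^{r(\cdot)}$, so Theorem~\ref{thm:extrapolation} yields
\[
\bigl\|\min(|T_\sigma(f,g)|, N)\,\chi_{B(0,N)}\bigr\|_{L^{r(\cdot)}} \lesssim \|f\|_{L^{p(\cdot)}}\|g\|_{L^{q(\cdot)}}
\]
with the implicit constant independent of $N$. The monotone convergence theorem for variable Lebesgue spaces (\cite[Theorem~2.59]{MR3026953}, as invoked in the preamble to Section~\ref{sec:applextrap}) then gives the desired estimate upon letting $N\to\infty$, and a standard density argument extends the bound from $\sw\times\sw$ to $L^{p(\cdot)}\times L^{q(\cdot)}$.

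There is essentially no obstacle here: the corollary is, as the paragraph preceding the statement already asserts, an immediate combination of Theorems~\ref{thm:fijutatomita} and~\ref{thm:extrapolation}. The only items meriting brief verification are the inclusions $A_1\subset A_{ps/n}$ and $A_1\subset A_{qt/n}$, which are automatic from $p>n/s$ and $q>n/t$, and the a priori finiteness of $\|h\|_{L^{r(\cdot)}}$, handled by the truncation device described in the section's opening remark.
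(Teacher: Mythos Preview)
Your proposal is correct and follows exactly the approach the paper indicates: the paper does not give a separate proof but states the corollary as ``an immediate consequence of Theorem~\ref{thm:fijutatomita} and Theorem~\ref{thm:extrapolation},'' with the truncation device and monotone convergence already spelled out in the opening paragraph of Section~\ref{sec:applextrap}. Your explicit check that $A_1\subset A_{ps/n}\cap A_{qt/n}$ (from $p>n/s$, $q>n/t$) and the final density step simply fill in the details the paper leaves implicit.
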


A slightly different version of Corollary~\ref{cor:multiplier} for the smaller class of symbols $\sigma$ satisfying $\sup_{k\in\ent}\norm{\sigma_k}{H^{s}(\rtn)}<\infty$ with $n< s\le 2n$ was
proved in~\cite{MR3359740}.   The proof again used linear
extrapolation and required the additional hypothesis that
$\frac{\rr}{r}\in \B$.

\begin{remark}
Weighted estimates like those in the hypothesis of
  Theorem~\ref{thm:extrapolation}, and the corresponding estimates on variable Lebesgue spaces, may be obtained for
  certain bilinear pseudodifferential operators $T_\sigma$, where
 $\sigma=\sigma(x,\xi,\eta)$ for $x,\xi,\eta\in\rn$.  We refer the
  reader to~\cite{MR3411149} and the references it contains for further
  details on these operators.
\end{remark}

\subsection*{Rough bilinear singular integrals in variable Lebesgue spaces.}

Let $\Omega \in L^\infty(\mathbb{S}^{2n-1})$ be such that
$\int_{\mathbb{S}^{2n-1}}\Omega \,d\sigma=0$ and define the bilinear
singular integral operator associated with $\Omega$ by
\begin{equation}\label{eq:Tomega}
T_\Omega(f,g)(x)=\text{p.v.}\int_{\rtn} K(x-y,x-z)f(y) g(z)\,dy dz,
\end{equation}
where $ f,g\in\sw, x\in\rn$ and
$K(y,z)=\frac{\Omega((y,z)/\abs{(y,z)})}{\abs{(y,z)}^{2n}}.$  These
operators were introduced by Coifman and Meyer~\cite{MR0380244}; for their
history see \cite{GHH2015} and the references it contains.   In this
latter paper it was
proved that $T_\Omega$ is bounded from $L^{p}\times L^q$ into $L^r$
for $1<p,q<\infty$ and $\frac{1}{r}=\frac{1}{p}+\frac{1}{q}.$  
In fact, we can readily adapt their proof to show that
$T_\Omega$ satisfies a weighted version of this result
when $1\le r<\infty.$

\begin{theorem}\label{thm:TOmegaweighted}
Let $\Omega \in L^\infty(\mathbb{S}^{2n-1})$ be such that
$\int_{\mathbb{S}^{2n-1}}\Omega \,d\sigma=0.$ If $1<p,q<\infty,$ $1\le
r<\infty,$ $\frac{1}{r}=\frac{1}{p}+\frac{1}{q},$ $v\in A_p$ and $w\in
A_q,$ then  for all $f\in L^p(v), \ g\in L^q(w)$,
 \begin{equation*}
\norm{T_\Omega (f,g)}{L^r(v^{\frac{r}{p}}w^{\frac{r}{q}})}\lesssim \norm{\Omega}{L^\infty} \norm{f}{L^p(v)}\norm{g}{L^q(w)},
\end{equation*}
where the implicit constant depends only on $p,$ $q,$
$[v]_{A_{p}}$, $[w]_{A_{q}}$ and $\Omega$.
\end{theorem}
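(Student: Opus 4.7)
The plan is to adapt the argument in~\cite{GHH2015} that establishes the unweighted bounds for $T_\Omega$, inserting the weighted bilinear Calder\'on-Zygmund estimate of Theorem~\ref{thm:czweighted} at the appropriate step. First I would introduce a smooth annular partition of unity $\{\psi_j\}_{j\in\Z}$ in the variables $(y',z') \in \rtn$, with $\psi_j(y',z') = \psi(2^{-j}|(y',z')|)$ for some $\psi \in C_c^\infty(\re)$ supported in $[1/2,2]$, and write $K = \sum_j K_j$ with $K_j = \psi_j K$. This yields a decomposition $T_\Omega = \sum_{j\in\Z} T_j$ on Schwartz functions, where $T_j$ is the bilinear operator with kernel $K_j(x-y,x-z)$, essentially supported in $\{(y,z): |(x-y,x-z)|\sim 2^j\}$.

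Next, I would verify that each $T_j$ has a bilinear Calder\'on-Zygmund kernel whose size and regularity constants are bounded by a constant multiple of $\|\Omega\|_{L^\infty}$, uniformly in $j$. Theorem~\ref{thm:czweighted} then provides
\[
\|T_j(f,g)\|_{L^r(v^{r/p}w^{r/q})} \lesssim \|\Omega\|_{L^\infty}\,\|f\|_{L^p(v)}\|g\|_{L^q(w)},
\]
uniformly in $j$. Individually these bounds do not sum, so I would combine them with the auxiliary unweighted estimates from~\cite{GHH2015}: on a suitable Banach triple $(p_0,q_0,r_0)$, the cancellation $\int_{\mathbb{S}^{2n-1}} \Omega\,d\sigma = 0$ together with the Fourier-analytic arguments there yields $\|T_j\|_{L^{p_0}\times L^{q_0}\to L^{r_0}} \lesssim \|\Omega\|_{L^\infty}\, 2^{-\eps|j|}$ for some $\eps>0$. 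Applying bilinear complex interpolation between the uniform weighted bound and this geometrically decaying unweighted bound, using the Jones factorization $v = v_1 v_2^{1-p}$, $w = w_1 w_2^{1-q}$ with $v_i,w_i \in A_1$ and the interpolation framework of~\cite{MR2030573}, produces a weighted estimate for each $T_j$ with constant decaying geometrically in $|j|$; summation over $j$ then completes the proof.

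The main obstacle is carrying out the weighted bilinear interpolation cleanly with the factored target weight $v^{r/p}w^{r/q}$, since the two interpolation endpoints live in different weighted scales. A safer alternative, available precisely because the range $r\ge 1$ makes $L^r(v^{r/p}w^{r/q})$ a genuine Banach space, is to dualize: testing against a unit vector $h\in L^{r'}\bigl((v^{r/p}w^{r/q})^{1-r'}\bigr)$ and freezing one of the arguments $f$ or $g$ reduces matters to weighted $A_p$ bounds for the linear rough singular integral in the remaining variable (\`a la Duoandikoetxea--Rubio de Francia), combined with H\"older's inequality in the weighted scale; this sidesteps any bilinear interpolation and delivers the estimate directly.
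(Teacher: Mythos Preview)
Your overall architecture---decompose $T_\Omega=\sum_j T_j$, combine a weighted Calder\'on--Zygmund bound on each piece with the geometrically decaying unweighted bound from~\cite{GHH2015}, and interpolate---is exactly the paper's strategy. The gap is in your step~2. With $\Omega$ merely in $L^\infty(\mathbb{S}^{2n-1})$, the naive annular pieces $K_j=\psi_j K$ carry \emph{no} kernel smoothness at all, so they are not bilinear Calder\'on--Zygmund kernels and Theorem~\ref{thm:czweighted} does not apply to them. The decomposition actually used in~\cite{GHH2015} is more elaborate (it involves an additional frequency-side smoothing), and the resulting $T_j$'s are genuine bilinear Calder\'on--Zygmund operators, but their kernel constants are not uniform: by~\cite[Lemma~10]{GHH2015} they grow like $C_{\varepsilon,n}\|\Omega\|_{L^\infty}2^{\varepsilon|j|}$ for any $0<\varepsilon<1$. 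Consequently Theorem~\ref{thm:czweighted} gives, for the target weights $v,w$, only
\[
\|T_j(f,g)\|_{L^r(v^{r/p}w^{r/q})}\lesssim 2^{\varepsilon|j|}\|f\|_{L^p(v)}\|g\|_{L^q(w)},
\]
which is the wrong direction for summing.

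The missing idea is the reverse H\"older (self-improvement) trick, not Jones factorization. Since $v\in A_p$ and $w\in A_q$, there is $0<\theta<1$ (depending on $[v]_{A_p},[w]_{A_q}$) with $v^{1/\theta}\in A_p$ and $w^{1/\theta}\in A_q$. Apply the growing weighted bound above with the \emph{stronger} weights $v^{1/\theta},w^{1/\theta}$, and then interpolate (bilinear complex interpolation together with Stein--Weiss change of measure, as in~\cite[Theorems~4.4.1 and~5.5.3]{MR0482275}) against the unweighted decaying bound $\|T_j\|_{L^p\times L^q\to L^r}\lesssim 2^{-\delta|j|}$. Interpolating with parameter $\theta$ lands exactly at the weights $v,w$ and produces a constant $\lesssim 2^{-|j|((1-\theta)\delta-\theta\varepsilon)}$; choosing $\varepsilon$ small makes this summable. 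This is precisely where the restriction $r\ge 1$ enters: complex interpolation requires the target to be a Banach space. Your alternative duality-and-freeze proposal does not work as stated, since freezing $g$ in $T_\Omega(f,g)$ yields a linear operator in $f$ whose kernel $\int K(\cdot,x-z)g(z)\,dz$ is $x$-dependent and not a standard linear rough singular integral; the known linear Duoandikoetxea--Rubio de Francia theory does not apply to it.
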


\begin{proof} Let $p,q,r$ and $v,w$ be as in the hypotheses.  In
  \cite{GHH2015} they showed that $T_\Omega=\sum_{j\in\ent}T_j,$ where
  each $T_j$ is a Calder\'on-Zygmund operator.  Moreover, they proved
  \cite[Proposition 3 and Lemma 11]{GHH2015} that there exists $\delta>0$ such that
for all $f\in L^p, \, g\in L^q$,
\begin{equation}\label{eq:Tjunweighted}
\norm{T_j (f,g)}{L^r}\lesssim \norm{\Omega}{L^\infty} 2^{-\abs{j}\delta}\norm{f}{L^p}\norm{g}{L^q}.
\end{equation}

To get a similar weighted estimate, we adapt an interpolation argument
from~\cite{MR837527} to the bilinear setting.
In~\cite[Lemma 10]{GHH2015} they prove that  for
any $0<\varepsilon<1$, the kernel constant of
$T_j$ is controlled by
$C_{\varepsilon,n}\norm{\Omega}{L^\infty}2^{\varepsilon \abs{j}}$.  It
follows from the proof of Theorem~\ref{thm:czweighted} that the
constant in the weighted norm inequality depends linearly on
the kernel constant; hence, this gives us that
\begin{equation*}
\norm{T_j (f,g)}{L^r(v^{\frac{r}{p}}w^{\frac{r}{q}})}\lesssim C_{\varepsilon,n}\norm{\Omega}{L^\infty} 2^{\abs{j}\varepsilon}\norm{f}{L^p(v)}\norm{g}{L^q(w)}
\end{equation*}
for all  $f\in L^p(v),$ $g\in L^q(w)$ and $j\in\ent.$  By the sharp
reverse H\"older inequality~\cite{MR3092729}, there exists
  $0<\theta<1$ depending on $[v]_{A_p}$,  such that  $v^{\frac{1}{\theta}}\in A_p$ and
  $[v^{\frac{1}{\theta}}]_{A_p} \leq 2 [v]_{A_p}$; the same is true for
$w^{\frac{1}{\theta}}\in A_q$.    Therefore, the above argument in
fact implies that
\begin{equation}\label{eq:Tjweighted}
\norm{T_j (f,g)}{L^r(v^{\frac{r}{p\theta}}w^{\frac{r}{q\theta}})}
\lesssim C_{\varepsilon,n}\norm{\Omega}{L^\infty}
2^{\abs{j}\varepsilon}
\norm{f}{L^p(v^{\frac{1}{\theta}})}\norm{g}{L^q(w^{\frac{1}{\theta}})}
\end{equation}
for all  $f\in L^p(v^{\frac{1}{\theta}}),$ $g\in L^q(w^{\frac{1}{\theta}})$ and
$j\in\ent$, where the implicit constant only depends on $p$, $q$,
$[v]_{A_p}$ and $[w]_{A_q}$.  

Since we assumed that $1\le r<\infty,$ we can use complex interpolation with a
change of measure (\cite[Theorem 5.5.3]{MR0482275}) and bilinear
complex interpolation (\cite[Theorem 4.4.1]{MR0482275}) to get that
\eqref{eq:Tjunweighted} and \eqref{eq:Tjweighted} together imply
\begin{equation}\label{eq:Tjweightedimproved}
\norm{T_j (f,g)}{L^r(v^{\frac{r}{p}}w^{\frac{r}{q}})}\lesssim C_{\varepsilon,n}^{(1-\theta)}\norm{\Omega}{L^\infty} 2^{-\abs{j}((1-\theta)\delta-\theta\varepsilon)}\norm{f}{L^p(v)}\norm{g}{L^q(w)}
\end{equation}
for all $f\in L^p(v),g\in L^q(w)$ and $j\in\ent$. If we choose
$\varepsilon$ small enough that
$\rho=(1-\theta)\delta-\theta\varepsilon>0,$ we then obtain that
for all $f\in L^p(v),\, g\in L^q(w)$,
\begin{equation*}
\norm{T_\Omega (f,g)}{L^r(v^{\frac{r}{p}}w^{\frac{r}{q}})}\lesssim \norm{\Omega}{L^\infty} \sum_{j\in\ent }2^{-\abs{j}\rho}\norm{f}{L^p(v)}\norm{g}{L^q(w)}.
\end{equation*}
Since this series converges, we get the desired result.  
\end{proof}

Theorems~\ref{thm:extrapolation}  and \ref{thm:TOmegaweighted}
together immediately imply that $T_\Omega$ is bounded in the variable
Lebesgue spaces. 

\begin{corollary} \label{cor:TOmega-var}
Let
$p(\cdot),q(\cdot),r(\cdot)\in  \Pp(\rn)$ satisfy
$\frac{1}{\rr}=\frac{1}{\pp}+\frac{1}{\qq}$, $p_{-}>1,$ $q_{-}>1,$
$ r_{-}>1,$ and assume there exist $1<p<p_{-}$ and $1<q<q_{-}$ such that $ \frac{1}{p}+\frac{1}{q}\le 1$ and 
$\left(\frac{p(\cdot)}{p}\right)',\,\left(\frac{q(\cdot)}{q}\right)'\in\B.$
If $\Omega \in L^\infty(\mathbb{S}^{2n-1})$ with
$\int_{\mathbb{S}^{2n-1}}\Omega\,d\sigma=0$  then 
for all $f\in L^{p(\cdot)}, \, g\in L^{q(\cdot)}$, 
\begin{equation*}
\norm{T_\Omega(f,g)}{L^{r(\cdot)}} \lesssim \norm{f}{L^{p(\cdot)}}
\norm{g}{L^{q(\cdot)}}.
\end{equation*}
\end{corollary}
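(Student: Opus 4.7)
The plan is to apply the bilinear extrapolation theorem (Theorem~\ref{thm:extrapolation}) directly to the weighted bound of Theorem~\ref{thm:TOmegaweighted}. The hypotheses on $p(\cdot),q(\cdot),r(\cdot)$ in the corollary are arranged so that the extrapolation theorem applies with base indices $p,q,r$ that fall in the range where Theorem~\ref{thm:TOmegaweighted} is valid, so the whole argument is essentially a verification that the quantifiers match up.

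First, I would fix scalars $1<p<p_-$ and $1<q<q_-$ as supplied by the hypothesis, with $\frac{1}{p}+\frac{1}{q}\le 1$ and $\left(\frac{p(\cdot)}{p}\right)',\,\left(\frac{q(\cdot)}{q}\right)'\in\B$, and set $\frac{1}{r}=\frac{1}{p}+\frac{1}{q}$, so $1\le r<\infty$. This is exactly the range where Theorem~\ref{thm:TOmegaweighted} applies. Since $A_1\subset A_p\cap A_q$ with $[v]_{A_p}\le[v]_{A_1}$ and $[w]_{A_q}\le[w]_{A_1}$, Theorem~\ref{thm:TOmegaweighted} specialized to $v,w\in A_1$ yields
\[
\norm{T_\Omega(f,g)}{L^r(v^{r/p}w^{r/q})}\lesssim \norm{f}{L^p(v)}\norm{g}{L^q(w)}
\]
for all $f\in L^p(v)$ and $g\in L^q(w)$, with implicit constant depending only on $p,q,[v]_{A_1},[w]_{A_1}$ and $\Omega$. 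This is precisely the input required by Theorem~\ref{thm:extrapolation}.

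Second, to handle the a priori finiteness condition in the conclusion of the extrapolation theorem, I would, following the remark at the start of Section~\ref{sec:applextrap}, take the family $\Ff$ to consist of triples
\[
(\min(|T_\Omega(f,g)|,N)\,\chi_{B(0,N)},\,|f|,\,|g|),\qquad f,g\in\sw,\ N\in\na.
\]
Truncation only decreases the left-hand side, so the weighted inequality above transfers to these triples, and the truncated factor is trivially in $L^{r(\cdot)}$. Theorem~\ref{thm:extrapolation} then delivers
\[
\norm{\min(|T_\Omega(f,g)|,N)\,\chi_{B(0,N)}}{L^{r(\cdot)}}\lesssim \norm{f}{L^{p(\cdot)}}\norm{g}{L^{q(\cdot)}}
\]
with constant independent of $N$. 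Letting $N\to\infty$ and invoking the monotone convergence theorem for variable Lebesgue spaces (\cite[Theorem~2.59]{MR3026953}) gives the desired bound for Schwartz $f,g$; density of $\sw$ in $L^{p(\cdot)}$ and $L^{q(\cdot)}$ (which is available because the $\B$ hypothesis forces $p_+,q_+<\infty$) then extends the estimate to arbitrary $f\in L^{p(\cdot)}$ and $g\in L^{q(\cdot)}$.

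I do not anticipate any genuine obstacle, since the proof is a mechanical application of extrapolation. The only point worth flagging is the role of the assumption $\frac{1}{p}+\frac{1}{q}\le 1$ in the statement: it is imposed precisely so that the extrapolation base index $r$ satisfies $r\ge 1$, which is the range in which Theorem~\ref{thm:TOmegaweighted} provides a weighted input. Weakening this would require a weighted bound for $T_\Omega$ in the range $\frac{1}{2}<r<1$, which is not available through the argument in~\cite{GHH2015} that we are using.
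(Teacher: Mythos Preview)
Your proposal is correct and follows exactly the approach the paper indicates: it says explicitly that Theorems~\ref{thm:extrapolation} and~\ref{thm:TOmegaweighted} ``together immediately imply'' the corollary, and your write-up simply spells out the verification (including the truncation/density remarks from the start of Section~\ref{sec:applextrap}) that the paper leaves to the reader. Your observation about the role of $\frac{1}{p}+\frac{1}{q}\le 1$ is also on point and matches the remark the paper makes immediately after the corollary.
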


\begin{remark}
We conjecture that Theorem~\ref{thm:TOmegaweighted} is true when
$1/2<r<1$, which would in turn imply that
Corollary~\ref{cor:TOmega-var} holds when $1/2<r_-<1$.  In the proof
of the theorem, the argument holds when $r<1$ up through the proof
of~\eqref{eq:Tjweighted}.  It is only in applying complex
interpolation that we need the additional hypothesis that $r>1$.  
\end{remark}

\section{Proof of main results}\label{sec:proofmain}

In this section we prove Theorems~\ref{thm:KPweighted} and
\ref{thm:KPvariable}.  

\begin{proof}[Proof of Theorem~\ref{thm:KPweighted}]
  Our proof follows the broad outline of the proof
  of~\cite[Theorem~1]{MR3200091}, and we refer the reader there for
  some details.
Let $p,q,r, s$
  and $v,w$ be as in the hypotheses.
Fix a function $\f\in \sw$ such that $\supp(\f) \subset \{\xi \in \rn
:|\xi| \leq 2\}$ and $\f\equiv 1$ in $\{\xi\in\re^n:|\xi| \leq
1\}\label{eq:partunity1}$.  Define $\psi$ by $\psi(\xi)=\f(\xi) -
\f(2\xi)$ for  $\xi \in \rn$.  Then for all $k \in \ent$, 
\[ \supp(\psi(2^{-k}\cdot)) \subset \{\xi \in \rn: 2^{k-1}\leq |\xi|
\leq 2^{k+1}\}, \]
and for $\xi \neq 0$ and $M\in \ent$,
\[ \sum_{k\in\ent} \psi(2^{-k}\xi) \equiv \,1, \qquad \sum_{k\le M}
\psi(2^{-k}\xi) \equiv \,\f(2^{-M}\xi). \]

\smallskip

  We first prove the estimate \eqref{eq:KPweighted1}. 
Given $f,g\in\sw$, decompose
  $D^s(fg)$ as
\begin{align*}
&D^s(fg)(x) \\
& \qquad  = \int_{\rtn} \abs{\xi+\eta}^s \hf(\xi)\hg(\eta) \eixxe\dxe\\
&\qquad =\int_{\rtn} \abs{\xi+\eta}^s \left(\sum_{k\in\ent}\psi(2^{-k}\xi)\hf(\xi)\right)\left(\sum_{\ell\in\ent}\psi(2^{-\ell}\eta)\hg(\eta)\right)\eixxe\dxe\\
&\qquad =T_{1,s}(D^s f,g)(x) + T_{2,s}(f,D^s g)(x) + T_{3,s}(f,D^sg)(x),
\end{align*}
where 
\begin{align*}
T_{1,s}(f,g)&=\int_{\rtn}\Phi_1(\xi,\eta)\frac{\abs{\xi+\eta}^s}{\abs{\xi}^s}\hf(\xi)\hg(\eta)\eixxe\dxe,\\
T_{2,s}(f, g)&=\int_{\rtn}\Phi_2(\xi,\eta)\frac{\abs{\xi+\eta}^s}{\abs{\eta}^s}\hf(\xi)\hg(\eta)\eixxe\dxe,\\
T_{3,s}(f,g)&= \int_{\rtn}\Phi_3(\xi,\eta)\frac{\abs{\xi+\eta}^s}{\abs{\eta}^s}\hf(\xi)\hg(\eta)\eixxe\dxe,
\end{align*}
and
\begin{align*}
\Phi_1(\xi,\eta) & =\sum_{k\in\ent} \psi(2^{-k}\xi)\f(2^{-(k-5)}\eta), \\
\Phi_2(\xi,\eta) & =\sum_{k\in\ent} \f(2^{-(k-5)}\xi)\psi(2^{-k}\eta),\\
\Phi_3(\xi,\eta) & =\sum_{k\in\ent} \sum_{\ell=- 4}^4\psi(2^{-k}\xi)\psi(2^{-(k+\ell)}\eta).
\end{align*}

To complete the proof it will suffice to prove that for all $f,g\in\sw,$
\begin{align}
&\norm{T_{1,s}(f,g)}{L^r(v^{\frac{r}{p}}w^{\frac{r}{q}})}\lesssim \norm{f}{L^p(v)}\norm{g}{L^q(w)},\label{eq:KPweighted3}\\
&\norm{T_{2,s}(f,g)}{L^r(v^{\frac{r}{p}}w^{\frac{r}{q}})}\lesssim \norm{f}{L^p(v)}\norm{g}{L^q(w)},\label{eq:KPweighted4}\\
&\norm{T_{3,s}(f,g)}{L^r(v^{\frac{r}{p}}w^{\frac{r}{q}})}\lesssim \norm{f}{L^p(v)}\norm{g}{L^q(w)},\label{eq:KPweighted5}
\end{align}
The first two inequalities are straightforward.
Since $\Phi_1$ is supported in
$\{(\xi,\eta)\in\rtn:\abs{\eta}\le \frac{1}{8}\abs{\xi}\}$ and
$\Phi_2$ is supported in
$\{(\xi,\eta)\in\rtn:\abs{\xi}\le \frac{1}{8}\abs{\eta}\},$ it follows
that $T_{1,s}$ and $T_{2,s}$ are bilinear Coifman-Meyer multiplier
operators; therefore, by Theorem~\ref{thm:czweighted} we have that
\eqref{eq:KPweighted3} and \eqref{eq:KPweighted4} hold.  

If $s$ is a non-negative even integer or $s$ is sufficiently large,
them $T_{3,s}$ is also a Coifman-Meyer multiplier operator and so 
\eqref{eq:KPweighted5} holds in these cases. Otherwise, $T_{3,s}$ may
fail to be a Coifman-Meyer multiplier operator (see \cite[Remark 1,
p. 1139]{MR3200091}). In general, however, as shown in \cite[p. 1137]{MR3200091},
$T_{3,s}$ can be written as a finite sum of terms (one for
each value of $\ell$) of the form 
\begin{equation*}
\sum_{m \in \zz^n} c_{s,m} T_m(f,g),
\end{equation*}
with  $\abs{c_{m,s}} \lesssim (1+|m|)^{-(n+s)}$  and 
\begin{equation*}
T_m(f,g)(x) = \sum_{k \in \ent} (\Psi^1_{k,m}*f)(x)({\Psi}^2_{k,m}*g)(x),
\end{equation*}
where for $j=1,2,$ $k\in\ent$ and $m\in\ent^n,$
$\Psi^j_{k,m}(x)= 2^{kn} \Psi^j(2^k x + m)$ for some smooth function
$\Psi^j$ such that $\widehat{\Psi^j}$ is supported in an annulus. By
the Cauchy-Schwarz inequality,
\begin{multline*}
\abs{\sum_{m \in \zz^n} c_{s,m} T_m(f,g)(x)} \\
\leq  \sum_{m \in \zz^n} (1+|m|)^{-(n+s)} \left(\sum_{k \in \ent} |\Psi^1_{k,m}*f(x)|^2 \right)^{1/2}  \left(\sum_{k \in \ent} |\Psi^2_{k,m}*g(x)|^2 \right)^{1/2}.
\end{multline*}

Define the square function $S_m^j$, $j=1,\,2$, by 
\begin{align*}
S_m^j(f)(x)= \left(\sum_{k \in \ent} |\Psi^j_{k,m}*f(x)|^2 \right)^{1/2}.
\end{align*}
By Theorem~\ref{thm:sqweighted} the operators $S^j_m$ satisfy weighted
estimates that are uniform in $m;$ therefore, with $r_*=\min(r,1),$ we
get that for all $f,g\in\sw$,
\begin{multline*}
\norm{\sum_{m \in \zz^n} c_{s,m}
  T_m(f,g)}{L^{r}(v^{\frac{r}{p}}w^{\frac{r}{q}})}^{r_*}
\lesssim \sum_{m \in \zz^n} (1+|m|)^{-(n+s)r_*} \norm{S_m^1 f}{L^{p}(v)}^{r_*}\norm{S_m^2 g}{L^{q}(w)}^{r_*}\\
\lesssim \sum_{m \in \zz^n} (1+|m|)^{-(n+s)r_*}\norm{f}{L^{p}(v)}^{r_*}\norm{g}{L^{q}(w)}^{r_*}.
\end{multline*}
By assumption, $s>\max\{0,n(\frac{1}{r}-1)\}$;  hence, $(n+s)r_*>n$
and so the series in $m$ converges.   This proves
inequality~\eqref{eq:KPweighted5} and so completes the proof of \eqref{eq:KPweighted1}.

\bigskip

The proof of \eqref{eq:KPweighted2} is similar.  We can  decompose $J^s(f,g)$ as
\[
J^s(f,g)(x)={\widetilde T}_{1,s}(J^sf,g)+{\widetilde T}_{2,s}(f,J^sg)+{\widetilde T}_{3,s}(f,J^sg);
\]
the operators ${\widetilde T}_{1,s}$ and ${\widetilde T}_{2,s}$ are
defined like $T_{1,s}$ and $T_{2,s}$ with
$\frac{\abs{\xi+\eta}^s}{\abs{\xi}^s}$ and
$\frac{\abs{\xi+\eta}^s}{\abs{\eta}^s}$ replaced by
$\frac{(1+\abs{\xi+\eta}^2)^\frac{s}{2}}{(1+\abs{\xi}^2)^{\frac{s}{2}}}$
and
$\frac{(1+\abs{\xi+\eta}^2)^\frac{s}{2}}{(1+\abs{\eta}^2)^{\frac{s}{2}}}$
respectively.  Again,  both ${\widetilde T}_{1,s}$ and ${\widetilde T}_{2,s}$
are bilinear Coifman-Meyer multiplier operators; therefore, weighted
estimates corresponding to \eqref{eq:KPweighted3} and \eqref{eq:KPweighted4}
hold for ${\widetilde T}_{j,s}$ for $j=1,2,$,
respectively. 

To estimate $\widetilde{T}_{3,s}$, we use the fact (see
\cite[pp. 1148-1149]{MR3200091}) that it can be written as
\[
{\widetilde T}_{3,s}(f,g)(x)={\widetilde T}_{3,s}^1(f,g)(x)+{\widetilde T}_{3,s}^2(f,g)(x),\quad x\in\rn,
\]
where 
\begin{align*}
\abs{{\widetilde T}_{3,s}^1(f,g)(x)}\le \sum_{m,\ell\in\ent^n} b_{m}^s\tilde{b}_\ell^s \sum_{k\ge 0} \abs{(\Psi_{k,m}\ast f)(x)( \Psi_{k,m+\ell}\ast g)(x)}
\end{align*}
and
\begin{align*}
{\widetilde T}_{3,s}^2(f,g)(x)= \sum_{m,\ell\in\ent^n}
  a_{m}^s\tilde{a}_\ell^s \sum_{k< 0} (\widetilde{\Psi}_{k,m} \ast f)( \widetilde{\Psi}_{k,m+\ell}\ast g),
\end{align*}
where $\Psi_{k,m}(x)=2^{kn}\Psi(2^k x+m)$ and
$\widetilde{\Psi}_{k,m}(x)=2^{kn}\Psi(2^k (x-m))$ with $\Psi\in \sw$
and $\widehat{\Psi}$ supported in an annulus. The constants satisfy 
\[ \abs{b_{m}^s}, \, |\tilde{b}_m^s|, \,\abs{a_{m}^s}, \, \abs{\tilde{a}_m^s}
\lesssim \abs{m}^{-n-s}.  \]
We can now argue as we did for $T_{3,s}$ above and use
Theorem~\ref{thm:sqweighted}  to get the corresponding estimate
\eqref{eq:KPweighted5} for ${\widetilde T}_{3,s}$.

\bigskip

We next prove \eqref{eq:Commweighted1}. The decompositions given below are inspired by those used in  the proof of \cite[Lemma X1]{MR951744}  corresponding to $J^s.$ We have
\begin{multline*}
D^s(fg)(x) -fD^s(g)(x) = \int_{\rtn} (\abs{\xi+\eta}^s-\abs{\eta}^s)
\hf(\xi)\hg(\eta) \eixxe\dxe\\ 
=Q_{1,s}(f,g)(x) + Q_{2,s}(f,g)(x) + Q_{3,s}(f,g)(x),
\end{multline*}
where 
\begin{align*}
Q_{1,s}(f,g)&=\int_{\rtn}\Phi_1(\xi,\eta)(\abs{\xi+\eta}^s-\abs{\eta}^s)\hf(\xi)\hg(\eta)\eixxe\dxe,\\
Q_{2,s}(f, g)&=\int_{\rtn}\Phi_2(\xi,\eta)(\abs{\xi+\eta}^s-\abs{\eta}^s)\hf(\xi)\hg(\eta)\eixxe\dxe,\\
Q_{3,s}(f,g)&= \int_{\rtn}\Phi_3(\xi,\eta)(\abs{\xi+\eta}^s-\abs{\eta}^s)\hf(\xi)\hg(\eta)\eixxe\dxe.
\end{align*}

The operator $Q_{1,s}$ can in turn be decomposed  as 
\begin{align*}
Q_{1,s}(f,g)(x)=Q_{1,s}^1(D^sf,g)(x)-Q_{1,s}^2(f,g)(x),
\end{align*}
where $Q_{1,s}^1$ is the same as $T_{1,s}$  (and hence satisfies \eqref{eq:KPweighted3}) and 
\[
Q_{1,s}^2(f,g)(x)=\int_{\rtn} \Phi_1(\xi,\eta)\abs{\eta}^s\widehat{f}(\xi) \widehat{g}(\eta)\eixxe\dxe.
\]
Moreover,
\begin{multline*}
\Phi_1(\xi,\eta)\abs{\eta}^s\widehat{f}(\xi) \widehat{g}(\eta)
=\Phi_1(\xi,\eta)\abs{2\pi \xi}^{-2} \abs{2\pi\xi}^2 \widehat{f}(\xi)
  \abs{\eta}^{-1}\abs{\eta}^2\widehat{D^{s-1}g}(\eta)\\ 
=\frac{1}{2\pi}\Phi_1(\xi,\eta)\abs{\xi}^{-2} \sum_{j=1}^n\xi_j \widehat{\partial_jf}(\xi) \sum_{k=1}^n\eta_k\widehat{G_kD^{s-1}g}(\eta),
\end{multline*}
where $\widehat{G_k
  h}(\eta)=\frac{\eta_k}{\abs{\eta}}\widehat{h}(\eta)$ is a constant
multiple of the Riesz transform $R_k$.  
Therefore, we have that
\[
Q_{1,s}^2(f,g)(x)=\sum_{j,k=1}^n Q_{1,s}^{2,j,k}(\partial_j f, G_kD^{s-1}g),
\]
where $Q_{1,s}^{2,j,k}$ is a bilinear multiplier operator with symbol
$\frac{1}{2\pi}\xi_j\eta_k\abs{\xi}^{-2}\Phi_1(\xi,\eta).$ Such a symbol
is a Coifman-Meyer multiplier since $\Phi_1$ is supported in the set
$\{(\xi,\eta)\in\rtn:\abs{\eta}\le \frac{1}{8}\abs{\xi}\};$ therefore, 
we can apply Theorem~\ref{thm:czweighted} to $Q_{1,s}^{2,j,k}$ 
to get that for all $ f,g\in\sw$,
\begin{equation*}
\norm{Q_{1,s}^2(f,g)}{L^r(v^{\frac{r}{p}}w^{\frac{r}{q}})}\lesssim \norm{\nabla f}{L^{p}(v)}\norm{D^{s-1}g}{L^q(w)}.
\end{equation*}
where we have used that $G_k$ is a bounded operator from $L^q(w)$ into
$L^q(w)$, since the Riesz transforms are (see~\cite{MR1800316}). 
If we combine these estimates, we see that for all $f,g\in\sw$,
\begin{equation}\label{eq:Commweighted3}
\norm{Q_{1,s}(f,g)}{L^r(v^{\frac{r}{p}}w^{\frac{r}{q}})}\lesssim \norm{D^sf}{L^p(v)}\norm{g}{L^q(w)}+ \norm{\nabla f}{L^{p}(v)}\norm{D^{s-1}g}{L^q(w)}.
\end{equation}

The symbol of the operator $Q_{2,s}$ is given by
\begin{align*}
\Phi_2(\xi,\eta) (\abs{\xi+\eta}^s-\abs{\eta}^s)&=\Phi_2(\xi,\eta) \abs{\eta}^s\left(\frac{\abs{\xi+\eta}^s}{\abs{\eta}^s}-1\right)\\
&=\Phi_2(\xi,\eta) \abs{\eta}^s\left[\left(1+\frac{\abs{\xi}^2+2\xi\cdot\eta}{\abs{\eta}^2}\right)^{s/2}-1\right]\\
&=\Phi_2(\xi,\eta) \abs{\eta}^s\sum_{j=1}^\infty {{s/2} \choose j} \left(\frac{\abs{\xi}^2+2\xi\cdot\eta}{\abs{\eta}^2}\right)^{j},
\end{align*}
where ${s/2\choose j}=\frac{s/2(s/2-1)(s/2-2)\cdots (s/2-j+1)}{j!}$
and the series converges absolutely and uniformly on the support of
$\Phi_2.$ Indeed, the support of $\Phi_2$ is contained in
$\{(\xi,\eta)\in\rtn: \abs{\xi}\le \frac{1}{8}\abs{\eta}\}$ and for
$(\xi,\eta)$ in this set we have
$\abs{\frac{\abs{\xi}^2+2\xi\cdot\eta}{\abs{\eta}^2}}\le
\frac{17}{64}$. 
(Recall that the radius of convergence of the binomial series is 1 for
exponents that are not in $\na_0$; otherwise the sum is finite.) Now,
with $c_{j,s}={{s/2} \choose j},$
\begin{multline*}
c_{j,s}\Phi_2(\xi,\eta) \abs{\eta}^s\left(\frac{\abs{\xi}^2+2\xi\cdot\eta}{\abs{\eta}^2}\right)^{j}\widehat{f}(\xi)\widehat{g}(\eta)\\
=\frac{c_{j,s}}{2\pi}\sum_{\nu=1}^n\Phi_2(\xi,\eta)\frac{(\abs{\xi}^2+2\xi\cdot\eta)^{j-1}}{\abs{\eta}^{2j-1}} (\xi_\nu+2\eta_\nu)\widehat{\partial_\nu f}(\xi)\widehat{D^{s-1}g}(\eta);
\end{multline*}
setting $\sigma_{j,\nu}(\xi,\eta)=\Phi_2(\xi,\eta)\frac{(\abs{\xi}^2+2\xi\cdot\eta)^{j-1}}{\abs{\eta}^{2j-1}} (\xi_\nu+2\eta_\nu),$  we have that 
\[
Q_{2,s}(f,g)(x)=\frac{1}{2\pi}\sum_{j=1}^\infty  \sum_{\nu=1}^n c_{j,s} T_{\sigma_{j,\nu}}(\partial_\nu f, D^{s-1}g)(x)\quad \forall f,g\in\sw, x\in\rn,
\]
where $T_{\sigma_{j,\nu}}$ is the bilinear multiplier operator with
symbol $\sigma_{j,\nu}.$ By Lemma~\ref{lemma:sigmajnu} (whose
statement and proof we defer to the end of this section),
$\sigma_{j,\nu}$ is a Coifman-Meyer multiplier for each $j\in\na$ and
$\nu=1,\ldots,n$.  Further, 
$\sum_{j=1}^\infty \abs{c_{j,s}}
\|T_{\sigma_{j,\nu}}\|_{p,q,v,w}<\infty,$
where $\|T_{\sigma_{j,\nu}}\|_{p,q,v,w}$ is the norm of
$T_{\sigma_{j,\nu}}$ as a bounded operator from $L^p(v)\times L^q(w)$
into $L^r(v^{\frac{r}{p}}w^{\frac{r}{q}}).$ This implies that  for $f,\, g\in\sw$, 
\begin{equation}\label{eq:Commweighted4}
\norm{Q_{2,s}(f,g)}{L^r(v^{\frac{r}{p}}w^{\frac{r}{q}})}\lesssim \norm{\nabla f}{L^{p}(v)}\norm{D^{s-1}g}{L^q(w)}.
\end{equation}

Finally, the operator $Q_{3,s}$ can be written as 
\[
Q_{3,s}(f,g)(x)= Q_{3,s}^1(D^sf,g)(x)-Q_{3,s}^2(D^sf,g)(x),
\]
where 
\begin{align*}
&Q_{3,s}^1(f,g)(x)=\int_{\rtn}\Phi_3(\xi,\eta) \frac{\abs{\xi+\eta}^s}{\abs{\xi}^s}\widehat{f}(\xi)\widehat{g}(\eta)\eixxe\dxe,\\
&Q_{3,s}^2(f,g)(x)=\int_{\rtn}\Phi_3(\xi,\eta) \frac{\abs{\eta}^s}{\abs{\xi}^s}\widehat{f}(\xi)\widehat{g}(\eta)\eixxe\dxe.
\end{align*}
The operator $Q^1_{3,s}$ is the same as the operator $T_{3,s}$ but
with the roles of $f$ and $g$ interchanged. The operator 
 $Q_{3,s}^2$ is a Coifman-Meyer multiplier operator since
$\Phi_3$ is supported in a region where $\abs{\xi}\sim
\abs{\eta}$.
Therefore, for $j=1,\,2$,  $Q_{3,s}^j(f,g)$ satisfies the weighted
estimates in the conclusion of 
Theorem~\ref{thm:czweighted}, and so we have that for all $f,g\in\sw$,
\begin{equation}\label{eq:Commweighted5}
\norm{Q_{3,s}(f,g)}{L^r(v^{\frac{r}{p}}w^{\frac{r}{q}})}\lesssim \norm{D^sf}{L^p(v)}\norm{g}{L^q(w)}.
\end{equation}
The estimate \eqref{eq:Commweighted1} now follows from
\eqref{eq:Commweighted3}, \eqref{eq:Commweighted4} and
\eqref{eq:Commweighted5}.

\medskip

Finally to prove \eqref{eq:Commweighted2} we describe the changes we
need to make in the preceding argument
(see \cite[Lemma X1]{MR951744}). In this
case, the operators $Q_{j,s}$ are replaced by 
the operators $\widetilde{Q}_{j,s}$, $j=1,2,3$, which in their
symbols have
$(1+\abs{\xi+\eta}^2)^\frac{s}{2}-(1+\abs{\eta}^2)^\frac{s}{2}$
instead of $\abs{\xi+\eta}^s-\abs{\eta}^s.$  The operator
$\widetilde{Q}_{1,s}$ can be decomposed as
 \begin{align*}
\widetilde{Q}_{1,s}(f,g)(x)=\widetilde{Q}_{1,s}^1(J^sf,g)(x)-\widetilde{Q}_{1,s}^2(f,g)(x),
\end{align*} 
where $\widetilde{Q}_{1,s}^1$ has symbol
$\Phi_1(\xi,\eta)\frac{(1+\abs{\xi+\eta}^2)^{\frac{s}{2}}-1}{(1+\abs{\xi}^2)^{\frac{s}{2}}},$
which is a Coifman-Meyer multiplier, and $\widetilde{Q}_{1,s}^2$ has
symbol $\Phi_1(\xi,\eta)((1+\abs{\eta}^2)^{\frac{s}{2}}-1).$ We then
get
 \[
\widetilde{Q}_{1,s}^2(f,g)(x)=\sum_{j,k=1}^n \widetilde{Q}_{1,s}^{2,j,k}(\partial_j f, \widetilde{G}_kJ^{s-1}g)
\]
where $\widetilde{Q}_{1,s}^{2,j,k}$ is a bilinear multiplier operator
with symbol $\frac{1}{2\pi}\xi_j\eta_k\abs{\xi}^{-2}\Phi_1(\xi,\eta)$
(i.e. $\widetilde{Q}_{1,s}^{2,j,k}=Q_{1,s}^{2,j,k}$) and
$\widetilde{G}_k$ is the linear multiplier operator given by
\[
\widehat{\widetilde{G}_kh}(\eta)=\frac{\eta_k(1+\abs{\eta}^2)^{\frac{1-s}{2}}((1+\abs{\eta}^2)^{\frac{s}{2}}-1)}{\abs{\eta}^2}\,\widehat{h}(\eta).
\]
By the weighted version of the H\"ormander-Mihlin theorem
(see~\cite{MR542885}), $\widetilde{G}_k$ is a bounded operator in
$L^q(w).$   As a consequence of these arguments we get \eqref{eq:Commweighted3} with
$Q_{1,s}$ and $D$ replaced with $\widetilde{Q}_{1,s}$ and $J,$
respectively.

For the operator $\widetilde{Q}_{2,s},$ we have that its symbol satisfies
\begin{multline*}
\Phi_2(\xi,\eta) \left((1+\abs{\xi+\eta}^2)^\frac{s}{2}-(1+\abs{\eta}^2)^\frac{s}{2}\right)\\
=\Phi_2(\xi,\eta) (1+\abs{\eta}^2)^\frac{s}{2}\sum_{j=1}^\infty {{s/2}
  \choose j}
\left(\frac{\abs{\xi}^2+2\xi\cdot\eta}{1+\abs{\eta}^2}\right)^{j}, 
\end{multline*}
with the series converging uniformly and absolutely on the support of
$\Phi_2,$ since
$\abs{\frac{\abs{\xi}^2+2\xi\cdot\eta}{1+\abs{\eta}^2}}\le
\frac{17}{64}<1$ if $(\xi,\eta)\in \supp(\Phi_2)$.  Hence, we have
that for all $f,\, g\in\sw$ and $x\in\rn$,
\[
\widetilde{Q}_{2,s}(f,g)(x)=\frac{1}{2\pi}\sum_{j=1}^\infty  \sum_{\nu=1}^n {{s/2} \choose j} T_{\widetilde{\sigma}_{j,\nu}}(\partial_\nu f, J^{s-1}g)(x),
\]
where 
\[
\widetilde{\sigma}_{j,\nu}(\xi,\eta)=\Phi_2(\xi,\eta)\frac{(\abs{\xi}^2+2\xi\cdot\eta)^{j-1}}{(1+\abs{\eta}^2)^{j-\frac{1}{2}}} (\xi_\nu+2\eta_\nu).
\]
The counterpart of \eqref{eq:Commweighted4} for
$\widetilde{Q}_{2,s}$ (with $D$ replaced by $J$) now follows from
Lemma~\ref{lemma:sigmajnu}.

Finally, the operator $\widetilde{Q}_{3,s}$ is treated in the same way as
$Q_{3,s}.$ The corresponding operator $\widetilde{Q}_{3,s}^1$ is
analogous to $\widetilde{T}_{3,s},$ while $\widetilde{Q}_{3,s}^2$ has
symbol
$\Phi_3(\xi,\eta)\left(\frac{1+\abs{\eta}^2}{1+\abs{\xi}^2}\right)^\frac{s}{2},$
which is a Coifman-Meyer multiplier. Inequality \eqref{eq:Commweighted5}
with $Q_{3,s}$ and $D$ replaced by $\widetilde{Q}_{3,s}$ and $J,$
respectively, follows as before.
\end{proof}

\bigskip

\begin{proof}[Proof of Theorem~\ref{thm:KPvariable}]
The desired inequalities all follow at once from bilinear
extrapolation and the weighted estimates derived in the proof of
Theorem~\ref{thm:KPweighted}.    To prove \eqref{eq:KPvariable1} and
\eqref{eq:KPvariable2}, it suffices to note that  the weighted
  inequalities proved for the operators $T_{j,s}$,  $j=1,2,3$---i.e.,
  \eqref{eq:KPweighted3}, \eqref{eq:KPweighted4} and
  \eqref{eq:KPweighted5}---satisfy the hypotheses of
  Theorem~\ref{thm:extrapolation}, and so we get the corresponding
  variable Lebesgue space estimates.  The same is true for the operators
  $\widetilde{T}_{j,s}$,  $j=1,2,3$.

To prove \eqref{eq:Commvariable1} and \eqref{eq:Commvariable2}, we
again note that the weighted norm inequalities for  $Q_{j,s}$,
$j=1,2,3$---i.e., \eqref{eq:Commweighted3}, \eqref{eq:Commweighted4}
and \eqref{eq:Commweighted5}---again satisfy the hypotheses of
  Theorem~\ref{thm:extrapolation}, and so we get the corresponding
  variable Lebesgue space estimates.  The same is true for the
  operators
$\widetilde{Q}_{j,s}$, $j=1,2,3.$
\end{proof}

We conclude this section by stating and proving the lemma used in the
proof of Theorem~\ref{thm:KPweighted}.  

\begin{lemma}\label{lemma:sigmajnu} For $j\in\na$ and $\nu=1,\ldots,
  n,$
  let $\sigma_{j,\nu}$ and $\widetilde{\sigma}_{j,\nu}$ be as in the
  proof of Theorem~\ref{thm:KPweighted}, let $p,q,r,v,w$ be as in the
  hypotheses of Theorem~\ref{thm:KPweighted},  and  fix $s>0.$ Then
  $T_{\sigma_{j,\nu}}$ is a Coifman-Meyer multiplier operator and
  $\sum_{j=1}^\infty |{s/2 \choose j}|
  \|T_{\sigma_{j,\nu}}\|_{p,q,v,w}<\infty,$
  where $\|T_{\sigma_{j,\nu}}\|_{p,q,v,w}$ is the norm of
  $T_{\sigma_{j,\nu}}$ as a bounded operator from
  $L^p(v)\times L^q(w)$ into $L^r(v^{\frac{r}{p}}w^{\frac{r}{q}})$. The analogous
 result also holds for $\widetilde{\sigma}_{j,\nu}.$
\end{lemma}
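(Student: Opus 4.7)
The plan is to reduce the lemma to estimating the Coifman-Meyer seminorms of $\sigma_{j,\nu}$ in terms of $j$, and then to invoke Theorem~\ref{thm:czweighted} together with the observation (already exploited in the proof of Theorem~\ref{thm:TOmegaweighted}) that the weighted constant produced by that theorem depends linearly on the kernel constants of the operator. The target estimate is $\|T_{\sigma_{j,\nu}}\|_{p,q,v,w} \lesssim j^{2n+1}(17/64)^{j-1}$; the polynomial growth in $j$ is then harmless because of the exponential factor, since $|{s/2\choose j}|$ is at worst polynomially bounded in $j$ (it is eventually zero if $s/2\in\na_0$, and $|{s/2\choose j}|\sim c\,j^{-s/2-1}$ otherwise).

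Toward the derivative estimates, I would factor
\[
\sigma_{j,\nu}(\xi,\eta) = \Phi_2(\xi,\eta)\, G(\xi,\eta)\, F(\xi,\eta)^{j-1},
\]
with $G(\xi,\eta)=(\xi_\nu+2\eta_\nu)/|\eta|$ and $F(\xi,\eta)=(|\xi|^2+2\xi\cdot\eta)/|\eta|^2$. The first point is that on $\supp(\Phi_2)$ one has $|\xi|\le|\eta|/8$, so $|\xi|+|\eta|\sim|\eta|$, and, as already noted in the proof of Theorem~\ref{thm:KPweighted}, $|F|\le 17/64$. Moreover, $F$ and $G$ are smooth on $\{\eta\ne 0\}$ and homogeneous of degree zero, so their derivatives of total order $k$ are $O(|\eta|^{-k})$, while $\Phi_2$ is a standard Coifman-Meyer symbol.

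The main step is to control derivatives of $F^{j-1}$. Applying Fa\`a di Bruno to the composition $t\mapsto t^{j-1}$, a mixed derivative of total order $k=|\beta+\gamma|$ decomposes as a sum (with coefficients bounded by a Stirling-type constant depending only on $k$) of terms of the form $(j-1)_\ell\,F^{j-1-\ell}\,\prod_i\partial^{\alpha_i}F$, where $1\le\ell\le k$ and $(j-1)_\ell=(j-1)(j-2)\cdots(j-\ell)$ is the falling factorial. The decisive observation is that $(j-1)_\ell=0$ whenever $\ell\ge j$, so only $\ell\le j-1$ contribute, and for these terms $|F|^{j-1-\ell}\le (17/64)^{j-1-\ell}$. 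Combining $(j-1)_\ell\le j^\ell$ with $|\partial^{\alpha_i}F|\lesssim|\eta|^{-|\alpha_i|}$ yields
\[
|\partial_\xi^\beta\partial_\eta^\gamma F^{j-1}|\lesssim C_k\,j^k\,(17/64)^{j-1}\,|\eta|^{-k}
\]
on $\supp(\Phi_2)$, after absorbing a finite factor $(64/17)^k$ and handling the finitely many small values of $j$ separately. A Leibniz expansion against $\Phi_2$ and $G$ then delivers the Coifman-Meyer condition
\[
|\partial_\xi^\beta\partial_\eta^\gamma\sigma_{j,\nu}(\xi,\eta)|\lesssim C_n\,j^{2n+1}\,(17/64)^{j-1}\,(|\xi|+|\eta|)^{-|\beta+\gamma|}
\]
uniformly for $|\beta+\gamma|\le 2n+1$.

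With this in hand, Theorem~\ref{thm:czweighted} (with linear dependence on the kernel constants) yields $\|T_{\sigma_{j,\nu}}\|_{p,q,v,w}\lesssim j^{2n+1}(17/64)^{j-1}$, and summability of $\sum_j|{s/2\choose j}|\,\|T_{\sigma_{j,\nu}}\|_{p,q,v,w}$ follows from the exponential decay. The argument for $\widetilde{\sigma}_{j,\nu}$ is the same once one observes that $\widetilde F(\xi,\eta)=(|\xi|^2+2\xi\cdot\eta)/(1+|\eta|^2)$ also satisfies $|\widetilde F|\le 17/64$ on $\supp(\Phi_2)$ and admits matching degree-zero type derivative bounds (the $1$ in the denominator only improves the estimates at low frequency). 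The main obstacle I anticipate is the careful bookkeeping in the Fa\`a di Bruno expansion: one must verify that the polynomial growth of the falling factorials $(j-1)_\ell$ is tamed by the powers of $|F|\le 17/64$ so that the exponential decay survives and dominates whatever polynomial factors appear from the binomial coefficients.
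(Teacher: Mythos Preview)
Your proposal is correct and follows essentially the same approach as the paper: factor $\sigma_{j,\nu}=\Phi_2\cdot G\cdot F^{j-1}$, use the bound $|F|\le 17/64$ on $\supp(\Phi_2)$ together with a Fa\`a di Bruno/Leibniz expansion to control the Coifman--Meyer seminorms with exponential decay in $j$, and then invoke Theorem~\ref{thm:czweighted} with linear dependence on the kernel constants. The only cosmetic differences are that the paper exploits the homogeneity of degree zero of $F$ and $G$ to reduce the derivative estimates to the sphere (obtaining a bound of the form $(17/64)^{j}+(17/32)^{j}$ rather than your $j^{2n+1}(17/64)^{j-1}$), and in the $\widetilde{\sigma}_{j,\nu}$ case it verifies explicitly that both $\widetilde F$ and the non-homogeneous analogue $\widetilde G(\xi,\eta)=(\xi_\nu+2\eta_\nu)/(1+|\eta|^2)^{1/2}$ satisfy the required weighted derivative bounds $|(\xi,\eta)|^{|\alpha|}|\partial^\alpha(\cdot)|\lesssim 1$ on $\supp(\Phi_2)$---a point your sketch leaves implicit but which goes through exactly as you suggest.
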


\begin{proof} 
  We will prove Lemma~\ref{lemma:sigmajnu} using
  Theorem~\ref{thm:czweighted} and the fact that for Coifman-Meyer
  multipliers, the constant in the weighted norm inequality depends
  linearly on the constant in~\eqref{eq:CMcondition}.  Recall that
  $\sigma_{j,\nu}=\frac{(\abs{\xi}^2+2\xi\cdot\eta)^{j-1}}{\abs{\eta}^{2j-1}}
  (\xi_\nu+2\eta_\nu) \Phi_2(\xi,\eta),$
  where $\Phi_2$ is supported in the set
  $\{(\xi,\eta)\in\rtn: \abs{\xi}\le \frac{1}{8}\abs{\eta}\}$ and $\xi_\nu, \eta_\nu$ denote the $\nu$-th coordinates of $\xi$ and $\eta,$ respectively. Set
  $\tau_{j,\nu}(\xi,\eta)=\frac{(\abs{\xi}^2+2\xi\cdot\eta)^{j-1}}{\abs{\eta}^{2j-1}}
  (\xi_\nu+2\eta_\nu)$
  and note that $\tau_{j,\nu}$ is homogeneous of degree 0 in its
  domain.  For $\beta,\gamma\in\na_0^n,$ we then have that for all
  $ (\xi,\eta), \eta\neq 0$,
\begin{equation*}
\partial_\xi^\beta\partial_\eta^\gamma\tau_{j,\nu}(\xi,\eta)=\abs{(\xi,\eta)}^{-\abs{\beta+\gamma}}(\partial_\xi^\beta\partial_\eta^\gamma\tau_{j,\nu})(\fr{(\xi,\eta)}{\abs{(\xi,\eta)}});
\end{equation*}
therefore, for all $(\xi,\eta)\in \supp(\Phi_2)\setminus\{(0,0)\}$,
\begin{equation*}
\abs{\partial_\xi^\beta\partial_\eta^\gamma\tau_{j,\nu}(\xi,\eta)}\lesssim
\sup_{(a,b)\in\text{supp}(\Phi_2)\setminus\{(0,0)\}}
\abs{(\partial_a^\beta\partial_b^\gamma\tau_{j,\nu})(\fr{(a,b)}{\abs{(a,b)}})}
\abs{(\xi,\eta)}^{-\abs{\beta+\gamma}}. 
\end{equation*}
Note that
$(\partial_a^\beta\partial_b^\gamma\tau_{j,\nu})(\fr{(a,b)}{\abs{(a,b)}})$
is bounded on the set $\supp(\Phi_2)\setminus \{(0,0)\}$ since
$\frac{8}{\sqrt{65}}\le \frac{\abs{b}}{\abs{(a,b)}}\le 1$ for
$(a,b)\in \supp(\Phi_2)\setminus \{(0,0)\}$.  By the product rule for
derivatives, the above inequality combined with the fact that $\Phi_2$
is a Coifman-Meyer multiplier implies that $\sigma_{j,\nu}$ is also a
Coifman-Meyer multiplier.  Moreover, for all $(\xi,\eta)\neq
(0,0)$, 
\begin{equation*}
\abs{\partial_\xi^\beta\partial_\eta^\gamma
  \sigma_{j,\nu}(\xi,\eta)}\lesssim
\mathop{\sup_{(a,b)\in\text{supp}(\Phi_2)\setminus\{(0,0)\}}}_{\tilde{\beta}\le
  \beta,\tilde{\gamma}\le
  \gamma}\abs{(\partial_a^{\tilde{\beta}}\partial_b^{\tilde{\gamma}}\tau_{j,\nu})(\fr{(a,b)}{\abs{(a,b)}})}
\abs{(\xi,\eta)}^{-\abs{\beta+\gamma}},
\end{equation*}
where the implicit constant is independent of $j.$ By
Theorem~\ref{thm:czweighted} we have that 
\[
\|T_{\sigma_{j,\nu}}\|_{p,q,v,w}\lesssim \sup_{\abs{\beta+\gamma}\le 2n+1}\sup_{(\xi,\eta)\neq (0,0)}\abs{\partial_\xi^\beta\partial_\eta^\gamma \sigma_{j,\nu}(\xi,\eta)} \abs{(\xi,\eta)}^{\abs{\beta+\gamma}},
\]
where the implicit constant is again independent of $j.$ Together,
these two inequalities imply that for all $j$, 
\[
\|T_{\sigma_{j,\nu}}\|_{p,q,v,w}\lesssim \sup_{\abs{\beta+\gamma}\le 2n+1}\sup_{(\xi,\eta)\in S} \abs{(\partial_\xi^\beta\partial_\eta^\gamma\tau_{j,\nu})(\xi,\eta)},
\]
where $S=\mathbb{S}^{2n-1}\cap \{(\xi,\eta):\abs{\xi}\le \frac{1}{8}\abs{\eta}\}.$ 

Set $F(\xi,\eta)=\frac{\xi_\nu+2\eta_\nu}{\abs{\eta}}$ and 
$G(\xi,\eta)=\frac{\abs{\xi}^2+2\xi\cdot \eta}{\abs{\eta}^2}$; then
$
\tau_{j,\nu}(\xi,\eta)=F(\xi,\eta)\left(G(\xi,\eta)\right)^{j-1}.
$
By induction we have that if $j>2n+2$  and $\abs{\beta+\gamma}\le 2n+1$, then 
\begin{multline}
\partial_\xi^\beta\partial_\eta^\gamma\tau_{j,\nu}(\xi,\eta)=\partial_{\xi}^\beta\partial_\eta^\gamma F(\xi,\eta) (G(\xi,\eta))^{j-1}\label{eq:sigmaFG}\\
+\mathop{\mathop{\sum_{\beta_1+\beta_2=\beta}}_{\gamma_1+\gamma_2=\gamma}}_{(\beta_2,\gamma_2)\neq
  (0,0)} \partial_{\xi}^{\beta_1}\partial_\eta^{\gamma_1} F(\xi,\eta)
  \sum_{\ell=2}^{\abs{\beta_2+\gamma_2}+1}
  \prod_{\kappa=1}^{\ell-1}(j-\kappa)
(G(\xi,\eta))^{j-\ell} H_{\beta_2,\gamma_2}(\xi,\eta),
\end{multline}
where $H_{\beta_2,\gamma_2}$ is a sum of products of derivatives of
$G$ with each product having $\beta_2$ derivatives with respect to
$\xi$ and $\gamma_2$ derivatives with respect to $\eta.$ The
derivatives of $F$ and $G$ are bounded on $S$ and
$\abs{G(\xi,\eta)}\le \frac{17}{64}$ for $(\xi,\eta)\in S.$ Therefore,
for all $j>2n+2$ we have that
\begin{multline*}
\sup_{\abs{\beta+\gamma}\le 2n+1}\sup_{(\xi,\eta)\in
  S}\abs{\partial_\xi^\beta\partial_\eta^\gamma\tau_{j,\nu}(\xi,\eta)}
\lesssim
\left(\fr{17}{64}\right)^{j-1}+\sum_{\ell=2}^{2n+2} (j-1)\cdots
(j-\ell+1)\left(\fr{17}{64}\right)^{j-\ell}\\ 
\lesssim \left(\fr{17}{64}\right)^{j-1}+ \left(\fr{17}{64}\right)^{j} \sum_{\ell=2}^{2n+2} {{j-1}\choose \ell-1}\lesssim \left(\fr{17}{64}\right)^{j}+ \left(\fr{17}{32}\right)^{j}.
\end{multline*}
Thus,
\[
\sum_{j=2n+3}^\infty {\textstyle |{s/2\choose j}|} \|T_{\sigma_{j,\nu}}\|_{p,q,v,w}\lesssim \sum_{j=2n+3}^\infty {\textstyle |{s/2\choose j}|} \big[\left(\fr{17}{64}\right)^{j}+ \left(\fr{17}{32}\right)^{j}\big] <\infty,
\] 
which implies the desired result for $\sigma_{j,s}.$

\bigskip

To prove the analogous result for $\widetilde{\sigma}_{j,\nu}$, recall that
\[ \widetilde{\sigma}_{j,\nu}(\xi,\eta)=\Phi_2(\xi,\eta)\frac{(\abs{\xi}^2+2\xi\cdot\eta)^{j-1}}{(1+\abs{\eta}^2)^{j-\frac{1}{2}}}
(\xi_\nu+2\eta_\nu) =
\Phi_2(\xi,\eta)\widetilde{\tau}_{j,\nu}(\xi,\eta). \]
The product rule and the fact that $\Phi_2$ is a Coifman-Meyer
multiplier imply that
\begin{equation*}
\abs{\partial_\xi^\beta\partial_\eta^\gamma \widetilde{\sigma}_{j,\nu}(\xi,\eta)}\lesssim \mathop{\sup_{(a,b)\in\text{supp}(\Phi_2)\setminus\{(0,0)\}}}_{\tilde{\beta}\le \beta,\tilde{\gamma}\le \gamma}\abs{\abs{(a,b)}^{\abs{\tilde{\beta}+\tilde{\gamma}}}
(\partial_a^{\tilde{\beta}}\partial_b^{\tilde{\gamma}}
\widetilde{\tau}_{j,\nu})(a,b)} \abs{(\xi,\eta)}^{-\abs{\beta+\gamma}}
\end{equation*}
for all $(\xi,\eta)\neq (0,0)$ and  all  $j.$ This computation is similar to the estimate for $\tau_{j,\nu}$ above, which only uses homogeneity to rescale the derivative.  The boundedness of the supremum is a consequence of the argument below. Thus for all $j$ we have that
\begin{multline*}
  \|T_{\widetilde{\sigma}_{j,\nu}}\|_{p,q,v,w}
  \lesssim \sup_{\abs{\beta+\gamma}\le 2n+1}\sup_{(\xi,\eta)\neq (0,0)}\abs{\partial_\xi^\beta\partial_\eta^\gamma \widetilde{\sigma}_{j,\nu}(\xi,\eta)} \abs{(\xi,\eta)}^{\abs{\beta+\gamma}}\\
\lesssim
  \mathop{\sup_{(\xi,\eta)\in\text{supp}(\Phi_2)\setminus\{(0,0)\}}}_{\abs{\beta+\gamma}\le
    2n+1}\abs{\abs{(\xi,\eta)}^{\abs{\beta+\gamma}}
    (\partial_\xi^{\beta}\partial_\eta^{\gamma}\widetilde{\tau}_{j,\nu})(\xi,\eta)}.
\end{multline*}

Define
$\widetilde{F}(\xi,\eta)=\frac{\xi_\nu+2\eta_\nu}{(1+\abs{\eta}^2)^{\frac{1}{2}}}$
and
$
\widetilde{G}(\xi,\eta)=\frac{\abs{\xi}^2+2\xi\cdot\eta}{1+\abs{\eta}^2}$; 
then we can prove a formula analogous to \eqref{eq:sigmaFG} for
$\partial_\xi^\beta\partial_\eta^\gamma\widetilde{\tau}_{j,\nu}(\xi,\eta)$
with $j>2n+2$ and $\abs{\beta+\gamma}\le 2n+1.$ It follows that the
functions
$\abs{(\xi,\eta)}^{\abs{\beta_1+\gamma_1}} \partial_{\xi}^{\beta_1}\partial_\eta^{\gamma_1}\widetilde{F}(\xi,\eta)$
and
$\abs{(\xi,\eta)}^{\abs{\beta_2+\gamma_2}}
\widetilde{H}_{\beta_2,\gamma_2}(\xi,\eta)$
are bounded on $\supp(\Phi_2)\setminus \{(0,0)\}$ for any
multi-indices and that $\abs{\widetilde{G}(\xi,\eta)}\le \frac{17}{64}$ for $(\xi,\eta)\in \supp(\Phi_2)\setminus \{(0,0)\}$; therefore, we can argue as before to obtain the
desired result for $\widetilde{\sigma}_{j,\nu}.$
\end{proof}

\section{Kato-Ponce inequalities in Lorentz spaces and Morrey spaces}\label{sec:LM}

The tools used to prove Theorem~\ref{thm:KPweighted} also lead to
fractional Leibniz rules in the settings of weighted Lorentz spaces
(more generally, weighted rearrangement invariant quasi-Banach
function spaces) and Morrey spaces. We state these results and briefly
describe their proofs.

\subsection{Kato-Ponce inequalities in weighted Lorentz spaces}

Given $0<p<\infty,$ $0<q\le \infty$ and a weight $w$ defined on $\rn,$  we denote by $L^{p,q}(w)$ the weighted Lorentz space consisting of complex-valued, measurable functions $f$ defined on $\rn$ such that
\[
\|f\|_{L^{p,q}(w)}=\left(\int_0^\infty (t^{\frac{1}{p}} f_w^*(t))^q\,\frac{dt}{t}\right)^{\frac{1}{q}}<\infty,
\]
where $f^*_w(t)=\inf\{\lambda\ge 0:w_f(\lambda)\le t\}$ with
$w_f(\lambda)=w(\{x\in\rn : \abs{f(x)}>\lambda\})$; the obvious changes apply if $q=\infty.$
 It follows that $L^{p,p}(w)=L^p(w)$ for $0<p< \infty.$ 
 
\begin{theorem}\label{thm:KPweightedLorentz}
Let $1< p,q<\infty$ and $\frac{1}{2}<r<\infty$ be such that $\frac{1}{r}=\frac{1}{p}+\frac{1}{q}$ and $0<a,b,c\le \infty$ be such that $\frac{1}{a}=\frac{1}{b}+\frac{1}{c}.$ If $w\in A_{\min\{p,q\}},$    and  $s>\max\{0,n(\frac{1}{r}-1)\}$ or
$s$ is a non-negative even integer, then
for all $f,g\in \sw$, 
\begin{align}
\norm{D^s(fg)}{L^{r,a}(w)} &\lesssim  \norm{D^sf}{L^{p,b}(w)} \norm{g}{L^{q,c}(w)}+ \norm{f}{L^{p,b}(w)} \norm{D^sg}{L^{q,c}(w)},
  \label{eq:KPweightedLorentz1}\\  
\norm{J^s(fg)}{L^{r,a}(w)} &\lesssim  \norm{J^sf}{L^{p,b}(w)} \norm{g}{L^{q,c}(w)}+ \norm{f}{L^{p,b}(w)} \norm{J^sg}{L^{q,c}(w)}.
  \label{eq:KPweightedLorentz2}
\end{align}
The implicit constants depend on $p$, $q$, $s$ and $[w]_{A_{\min\{p,q\}}}$. Moreover, different pairs of $p, q$  and $b,c$ can be chosen for each term on the right hand sides of \eqref{eq:KPweightedLorentz1} and \eqref{eq:KPweightedLorentz2}.
\end{theorem}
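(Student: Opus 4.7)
The plan is to follow the decomposition used to prove Theorem~\ref{thm:KPweighted}, upgrading each building block to the weighted Lorentz space setting. Since $w\in A_{\min\{p,q\}}\subset A_p\cap A_q$, the case $b=p$, $c=q$, $a=r$ is already Theorem~\ref{thm:KPweighted} applied with $v=w$, so the real work is in allowing arbitrary secondary Lorentz exponents $a,b,c$.

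My first step is to extend Theorem~\ref{thm:sqweighted} to the Lorentz scale: for all $1<p<\infty$, $0<b\le \infty$ and $w\in A_p$,
\[
\bigg\|\bigg(\sum_{k\in\ent}\abs{\Psi^{\bar z}_{k,m}\ast f}^2\bigg)^{1/2}\bigg\|_{L^{p,b}(w)}\lesssim \norm{f}{L^{p,b}(w)},
\]
with constant independent of $m$ and $\bar z$. Because Theorem~\ref{thm:sqweighted} already gives the corresponding $L^p(w)$-bound with constants uniform in $m$ and $\bar z$ for every $1<p<\infty$ and every $w\in A_p$, this upgrade follows directly from Rubio de Francia extrapolation on weighted Lorentz spaces. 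The same extrapolation machinery applied to Theorem~\ref{thm:czweighted} (with $v=w$) produces
\[
\norm{T(f,g)}{L^{r,a}(w)} \lesssim \norm{f}{L^{p,b}(w)}\norm{g}{L^{q,c}(w)}
\]
for every bilinear Calder\'on--Zygmund operator $T$ and every $w\in A_{\min\{p,q\}}$; this can be done either by bilinear Lorentz extrapolation directly, or by freezing one argument, applying linear Lorentz extrapolation, and then using H\"older's inequality in weighted Lorentz spaces.

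With these tools in place, the decomposition from the proof of Theorem~\ref{thm:KPweighted} carries through. The operators $T_{1,s}$, $T_{2,s}$, and (when $s$ is a non-negative even integer or sufficiently large) $T_{3,s}$ are bilinear Coifman--Meyer multipliers, so they are handled by the extrapolated estimate above. For the remaining $T_{3,s}$, Cauchy--Schwarz yields the pointwise bound $\abs{T_m(f,g)}\le S_m^1 f\cdot S_m^2 g$, and H\"older's inequality for weighted Lorentz spaces combined with the uniform square-function bound gives
\[
\norm{T_m(f,g)}{L^{r,a}(w)} \lesssim \norm{f}{L^{p,b}(w)}\norm{g}{L^{q,c}(w)}
\]
with constant independent of $m$. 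The series in $m$ is then summed using the decay $|c_{s,m}|\lesssim (1+|m|)^{-(n+s)}$ together with an Aoki--Rolewicz-type power-norm inequality in the quasi-Banach space $L^{r,a}(w)$. The inequality \eqref{eq:KPweightedLorentz2} follows in exactly the same way from the decomposition of $J^s(fg)$ and its associated square functions.

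The main obstacle is the summation over $m$ in the quasi-Banach case $\min(r,a)<1$, where the triangle inequality is unavailable. One must select an exponent $r_\ast$ for which $r_\ast$-subadditivity holds on $L^{r,a}(w)$ and then reconcile the resulting summability requirement $(n+s)r_\ast>n$ with the hypothesis $s>\max\{0,n(1/r-1)\}$. When $a\ge \min(r,1)$ this works with the same $r_\ast=\min(r,1)$ used in the Lebesgue argument; in the genuinely quasi-Banach range $a<\min(r,1)$ one reduces matters to an $\ell^{\min(r,1)}$-summability via an interpolation or rearrangement argument so as not to impose any additional restriction on $s$ beyond the stated one.
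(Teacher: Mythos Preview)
Your approach is essentially the paper's: extrapolate Theorem~\ref{thm:sqweighted} to weighted Lorentz spaces (the paper packages this as Theorem~\ref{thm:sqweightedLorentz}, obtained from \cite[Thm~4.10]{MR2797562}), use the Lorentz bounds for bilinear Calder\'on--Zygmund operators (the paper quotes \cite[Corollary~6.11]{MR2231047} directly rather than re-deriving them by extrapolation), and then rerun the $T_{j,s}/\widetilde T_{j,s}$ decomposition from the proof of Theorem~\ref{thm:KPweighted}, combining H\"older's inequality in weighted Lorentz spaces with power-subadditivity of $\|\cdot\|_{L^{r,a}(w)}$ to handle the sum over $m$.

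The only real gap is your last paragraph. You correctly observe that Hunt's result makes $\|\cdot\|_{L^{r,a}(w)}^{t}$ comparable to a subadditive functional only for $t\le\min(a,r,1)$, so that when $a<\min(r,1)$ the summability requirement $(n+s)t>n$ becomes $s>n(1/a-1)$, which is strictly stronger than the stated hypothesis on $s$. But your proposed remedy---``reduce to $\ell^{\min(r,1)}$-summability via an interpolation or rearrangement argument''---is not an argument: a uniform bound on $\|T_m(f,g)\|_{L^{r,a}(w)}$ together with $\sum_m|c_{s,m}|^{\min(r,1)}<\infty$ does not by itself control $\big\|\sum_m c_{s,m}T_m(f,g)\big\|_{L^{r,a}(w)}$ in a space whose effective subadditivity exponent is strictly smaller than $\min(r,1)$, and interpolation among the $L^{r,a}$ goes the wrong way (the embedding $L^{r,a_1}\hookrightarrow L^{r,a_2}$ holds for $a_1\le a_2$, not the reverse). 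The paper does not address this point either---it simply invokes Hunt's subadditivity and moves on---so you are not behind the paper here; but you should either drop the unsubstantiated sentence and accept the range of $a$ that the subadditivity argument actually delivers, or supply a genuine mechanism for the small-$a$ case.
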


The proof of Theorem~\ref{thm:KPweightedLorentz} will follow from a version of Theorem~\ref{thm:sqweighted} for  weighted Lorentz spaces and the boundedness of bilinear Calder\'on-Zygmund operators in weighted Lorentz spaces. 

\begin{theorem}\label{thm:sqweightedLorentz} 
  Let $\Psi\in \sw$ be such that $\supp(\widehat{\Psi}) \subset \{\xi\in\rn: c_1<\abs{\xi}<c_2\}$
  for some $0<c_1<c_2<\infty$.   Given a sequence
  $\bar{z}=\{z_{k,m}\}_{k\in\ent,m\in\ent^n}\subset \rn,$  define
  $\Psi^{\bar{z}}_{k,m}(x)=2^{kn}\Psi(2^{k}(x+z_{k,m}))$ for
  $x\in\rn,$ $m\in\ent^n$ and $k\in\ent.$   Then for every
  $1<p<\infty$, $1\le a\le \infty,$   $w\in A_p,$ and $f\in L^p(w)\cap L^{p,a}(w)$, 
\[
\bigg\|\bigg(\sum_{k\in\ent }\abs{\Psi^{\bar{z}}_{k,m}\ast
  f}^2\bigg)^{\frac{1}{2}}\bigg\|_{L^{p,a}(w)}\lesssim \norm{f}{L^{p,a}(w)},  
\]
where the implicit constants depend on $\Psi$ and $[w]_{A_p}$ but are
independent of $m$ and $\bar{z}.$
\end{theorem}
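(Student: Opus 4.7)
The plan is to deduce Theorem~\ref{thm:sqweightedLorentz} from Theorem~\ref{thm:sqweighted} by real interpolation applied to an associated vector-valued linear operator. Setting
\[
T^{\bar{z}}_m(f) = \{ \Psi^{\bar{z}}_{k,m} \ast f \}_{k \in \ent},
\]
we view $T^{\bar{z}}_m$ as a linear map from scalar functions into $\ell^2$-valued functions, so that
\[
\Big\| \Big( \sum_{k\in\ent} |\Psi^{\bar{z}}_{k,m} \ast f|^2 \Big)^{\frac{1}{2}} \Big\|_{L^{p,a}(w)}
= \| T^{\bar{z}}_m(f) \|_{L^{p,a}(w;\ell^2)}.
\]
Theorem~\ref{thm:sqweighted} asserts precisely that $T^{\bar{z}}_m$ is bounded from $L^p(w)$ into $L^p(w;\ell^2)$ for every $1<p<\infty$ and every $w\in A_p$, with operator norm depending only on $\Psi$ and $[w]_{A_p}$, and in particular independent of $m$ and $\bar z$.

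Fix $1<p<\infty$, $1\le a\le\infty$, and $w\in A_p$. By the openness of the Muckenhoupt classes (a consequence of the sharp reverse H\"older inequality~\cite{MR3092729}), there exist $1<p_0<p<p_1<\infty$ such that $w\in A_{p_0}\cap A_{p_1}$ with $[w]_{A_{p_j}}\lesssim [w]_{A_p}$ for $j=0,1$. Applying Theorem~\ref{thm:sqweighted} at both endpoints gives that $T^{\bar{z}}_m : L^{p_j}(w) \to L^{p_j}(w;\ell^2)$ is bounded for $j=0,1$, with operator norms independent of $m$ and $\bar{z}$.

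Next I would invoke the weighted vector-valued real interpolation identity
\[
\bigl( L^{p_0}(w;\ell^2),\, L^{p_1}(w;\ell^2) \bigr)_{\theta, a} = L^{p,a}(w;\ell^2), \qquad \frac{1}{p} = \frac{1-\theta}{p_0} + \frac{\theta}{p_1},
\]
which reduces, via the change of measure $d\mu = w\, dx$, to the classical Lions--Peetre identification of real interpolation spaces between vector-valued Lebesgue spaces and vector-valued Lorentz spaces. Interpolating the endpoint estimates for the linear operator $T^{\bar{z}}_m$ then yields $\| T^{\bar{z}}_m(f) \|_{L^{p,a}(w;\ell^2)} \lesssim \| f \|_{L^{p,a}(w)}$ with constant independent of $m$ and $\bar z$, which is the desired bound.

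The main technical obstacle is ensuring the vector-valued, weighted real interpolation identity is applicable in the form stated; while its scalar unweighted version is entirely classical, the weighted $\ell^2$-valued variant requires some care. An alternative route that avoids choosing endpoints $p_0,p_1$ altogether is to invoke a Rubio de Francia--type extrapolation theorem on weighted rearrangement invariant spaces, in the spirit of the extrapolation framework in~\cite{MR2210118}, which directly upgrades the family of $L^p(w)$ bounds provided by Theorem~\ref{thm:sqweighted} (valid for all $1<p<\infty$ and $w\in A_p$) to the corresponding $L^{p,a}(w)$ bounds, preserving the independence from $m$ and $\bar z$.
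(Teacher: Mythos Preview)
Your proposal is correct. The worry you flag about the weighted vector-valued real interpolation identity is not a genuine obstacle: since $L^{p,a}(w)$ is, by definition, the Lorentz space $L^{p,a}(\mu)$ with respect to the measure $d\mu=w\,dx$, the identity $\bigl(L^{p_0}(\mu;\ell^2),L^{p_1}(\mu;\ell^2)\bigr)_{\theta,a}=L^{p,a}(\mu;\ell^2)$ is exactly the standard Lions--Peetre result for Banach-space-valued Lebesgue spaces over an arbitrary $\sigma$-finite measure space (see, e.g., \cite[Theorem~5.2.1]{MR0482275}); no additional ``weighted'' technology is needed.

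The paper, however, takes your \emph{alternative} route rather than your primary one: it simply cites the extrapolation theorem for weighted rearrangement-invariant spaces \cite[Theorem~4.10 and p.~70]{MR2797562}, applied to the family of pairs $\Bigl(\bigl(\sum_{k\in\ent}|\Psi^{\bar z}_{k,m}\ast f|^2\bigr)^{1/2},\,f\Bigr)$, and obtains the Lorentz bound in one line from Theorem~\ref{thm:sqweighted}. Your interpolation argument is a legitimate and more self-contained alternative (it uses only the openness of $A_p$ and classical real interpolation rather than the full extrapolation machinery), at the cost of having to choose endpoints $p_0,p_1$ depending on $[w]_{A_p}$ and track constants through the reverse H\"older step. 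The extrapolation approach, by contrast, packages all of that dependence into a single black box and applies uniformly to other rearrangement-invariant targets beyond Lorentz spaces.
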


\begin{proof} This is due to  Theorem~\ref{thm:sqweighted} along with  \cite[Thm 4.10 and comments on p. 70]{MR2797562}, applied to the family of pairs   $\bigg(\bigg(\sum_{k\in\ent }\abs{\Psi^{\bar{z}}_{k,m}\ast f}^2\bigg)^{\frac{1}{2}}, f\bigg),$  $f\in L^p(w)\cap L^{p,a}(w).$ 
\end{proof}

\begin{thm}[See Corollary 6.11 and comments on page 311 in \cite{MR2231047}]\label{thm:CZLorentz}
Let  $1<p,q <\infty$ and $\frac{1}{2}<r<\infty$ be such that $\frac{1}{r}=\frac{1}{p}+\frac{1}{q}$ and $0<a,b,c\le \infty$ be such that $\frac{1}{a}=\frac{1}{b}+\frac{1}{c}.$ If $T$ is a bilinear Calder\'on-Zygmund operator and $w\in A_{\min\{p,q\}}$, then $T$ is bounded from $L^{p,b}(w)\times L^{q,c}(w)$ into $L^{r,a}(w).$
\end{thm}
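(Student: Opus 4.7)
The plan is to deduce Theorem~\ref{thm:CZLorentz} from the weighted Lebesgue estimate of Theorem~\ref{thm:czweighted} via extrapolation to weighted Lorentz spaces, parallel to the way Theorem~\ref{thm:sqweightedLorentz} was derived from Theorem~\ref{thm:sqweighted}. Since the scalar extrapolation theorem \cite[Thm~4.10]{MR2797562} operates on families of pairs of nonnegative functions, I would linearize the bilinear estimate by fixing one argument at a time.

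First, with $g\in L^q(w)$ fixed, the linear operator $T_g(f):=T(f,g)$ is bounded from $L^p(v)$ into $L^r(v^{r/p}w^{r/q})$ for every $v\in A_p$, with norm controlled by $\|g\|_{L^q(w)}$ (by Theorem~\ref{thm:czweighted}). Applying \cite[Thm~4.10]{MR2797562} to the family of pairs $(|T(f,g)|,\|g\|_{L^q(w)}|f|)$ extrapolates this bound to the weighted Lorentz scale, giving
\[
\|T(f,g)\|_{L^{r,b}(v^{r/p}w^{r/q})}\lesssim \|f\|_{L^{p,b}(v)}\|g\|_{L^q(w)}.
\]
Setting $v=w\in A_{\min(p,q)}\subset A_p\cap A_q$ and then repeating the extrapolation procedure in the variable $g$ (now freezing $f\in L^{p,b}(w)$ and extrapolating in weights belonging to $A_q$) produces the claimed estimate
\[
\|T(f,g)\|_{L^{r,a}(w)}\lesssim \|f\|_{L^{p,b}(w)}\|g\|_{L^{q,c}(w)},\qquad \tfrac{1}{a}=\tfrac{1}{b}+\tfrac{1}{c}.
\]

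The main obstacle will be verifying that the H\"older-type relation $\tfrac{1}{a}=\tfrac{1}{b}+\tfrac{1}{c}$ on the second Lorentz indices emerges correctly through two successive linear extrapolations, and that the operator norms combine multiplicatively in the expected way. This can be checked directly from the Rubio de Francia iteration underlying \cite[Thm~4.10]{MR2797562}---a single extrapolation preserves the second Lorentz index, so after two rounds the bilinear structure of the estimate, together with H\"older's inequality for Lorentz spaces, forces the H\"older relation---or, more cleanly, by invoking a bilinear version of the extrapolation theorem in the spirit of the bilinear real interpolation theorem of Strichartz--Janson--Rivi\`ere, which is ultimately what underlies the reference \cite[Cor.~6.11]{MR2231047}. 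A secondary bookkeeping point is choosing the initial extrapolation so that $v=w$ can be inserted (i.e.\ making the output weight $v^{r/p}w^{r/q}$ collapse to $w$), which is possible precisely because $w\in A_{\min(p,q)}$.
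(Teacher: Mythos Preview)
The paper does not prove this statement; it simply quotes \cite[Cor.~6.11]{MR2231047}. Your two-stage linearization, however, has a real gap at the second step. Grant for the moment that the first extrapolation can be made to work (it is off-diagonal in both exponent and weight---$L^p(v)\to L^r(v^{r/p}w^{r/q})$---so \cite[Thm~4.10]{MR2797562} as cited does not apply; you would need an off-diagonal RIBFS extrapolation theorem). You then arrive at
\[
\|T(f,g)\|_{L^{r,b}(w)}\lesssim \|f\|_{L^{p,b}(w)}\,\|g\|_{L^{q}(w)}\qquad\text{for every }w\in A_{\min(p,q)}.
\]
To run a second linear extrapolation in $g$, you must feed the machinery a \emph{fixed} family of pairs. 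The natural candidate is $\bigl(|T(f,g)|,\,\|f\|_{L^{p,b}(w)}\,|g|\bigr)$, but the scalar $\|f\|_{L^{p,b}(w)}$ depends on the very weight $w$ you are varying. No normalization of $f$ removes this dependence, so the Rubio de Francia iteration cannot be set up. This is exactly the obstruction that forces the argument to be genuinely bilinear rather than two linear steps in sequence.

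The proof in \cite{MR2231047}---which your final paragraph correctly identifies---runs the iteration algorithm once, on a single auxiliary function dual to $|T(f,g)|^r$, and then splits the resulting product $\|f\|\cdot\|g\|$ on the right by H\"older's inequality \emph{at the level of the target Lorentz spaces}; this is where the relation $\tfrac{1}{a}=\tfrac{1}{b}+\tfrac{1}{c}$ enters. That multilinear step is the entire content of the proof, not a fallback: the iterated linear scheme preceding your last remark does not furnish an alternative route to it.
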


\begin{proof}[Proof of Theorem~\ref{thm:KPweightedLorentz}] The proof
  of \eqref{eq:KPweightedLorentz1} and \eqref{eq:KPweightedLorentz2}
  now proceeds as in the case of \eqref{eq:KPweighted1} and
  \eqref{eq:KPweighted2}. Indeed, for $j=1,2,3,$ let $T_{j,s}$ and
  $\tilde{T}_{j,s}$ be as in the proof of
  Theorem~\ref{thm:KPweighted}. Theorem~\ref{thm:CZLorentz} gives the
  needed estimates for the operators $T_{j,s}$ and $\tilde{T}_{j,s}$
  for $j=1,2.$ The desired control for the operators $T_{3,s}$ and
  $\tilde{T}_{3,s}$ follows the same ideas as those used for
  \eqref{eq:KPweighted1} and \eqref{eq:KPweighted2}.  Recall that
  $\norm{\cdot}{L^{p,a}(w)}^t$ is comparable to a subadditive quantity
  for $0<t\le 1$ satisfying $t\le a$ and $t<p$ (see \cite[p. 258,
  (2.2)]{MR0223874}) and weighted Lorentz spaces satisfy a
  H\"older-type inequality (see \cite[Thm 4.5]{MR0223874}); then apply
  Theorem~\ref{thm:sqweightedLorentz}.
\end{proof}

\begin{remark} Theorem 4.10 in \cite{MR2797562} and Corollary 6.11 in
  \cite{MR2231047} are also true for weighted versions of
  rearrangement invariant Banach and quasi-Banach function spaces,
  respectively, satisfying certain conditions. An argument like the
  one above for Lorentz spaces  leads to some variants of
  \eqref{eq:KPweightedLorentz1} and \eqref{eq:KPweightedLorentz2} in
  this more general context, and in particular to Orlicz space estimates.
\end{remark}

\subsection{Kato-Ponce inequalities in Morrey spaces}

Given $0\le \kappa \le n$ and $0< p<\infty,$ the Morrey space $\mathcal{L}^{p,\kappa}$ is defined as the class of functions $f\in L^p_{\text{loc}}(\rn)$ such that 
\[
\norm{f}{\mathcal{L}^{p,\kappa}}=\sup_{B}\left(\frac{1}{\abs{B}^{1-\kappa/n}}\int_B\abs{f(x)}^p\,dx\right)^{\frac{1}{p}}<\infty,
\]
where the supremum is taken over all balls $B \subset \rn$. It  is a Banach space, which coincides with $L^p$ for $\kappa=n$
and with $L^\infty$ for $\kappa = 0.$  

\begin{theorem} \label{thm:KPMorrey}
Let $1< p,q<\infty$ and $\frac{1}{2}<r<\infty$ be such that $\frac{1}{r}=\frac{1}{p}+\frac{1}{q}$ and $0<\kappa\le n.$ If  $s>\max\{0,n(\frac{1}{r}-1)\}$ or
$s$ is a non-negative even integer, then
for all $f,g\in \sw$, 
\begin{align}
\norm{D^s(fg)}{{\mathcal L}^{r,\kappa}} &\lesssim  \norm{D^sf}{{\mathcal L}^{p,\kappa}} \norm{g}{\mathcal{L}^{q,\kappa}}+ \norm{f}{\mathcal{L}^{p,\kappa}} \norm{D^sg}{\mathcal{L}^{q,\kappa}},
  \label{eq:KPMorrey1}\\  
\norm{J^s(fg)}{{\mathcal L}^{r,\kappa}} &\lesssim  \norm{J^sf}{\mathcal{L}^{p,\kappa}} \norm{g}{{\mathcal L}^{q,\kappa}}+ \norm{f}{{\mathcal L}^{p,\kappa}} \norm{J^sg}{\mathcal{L}^{q,\kappa}}.
  \label{eq:KPMorrey2}
  \end{align}
  Different triplets of $p,q,\kappa$  can be chosen on the righthand side of \eqref{eq:KPMorrey1} and $\eqref{eq:KPMorrey2}.$
\end{theorem}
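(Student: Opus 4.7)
The plan is to adapt the argument used to prove Theorems~\ref{thm:KPweighted} and~\ref{thm:KPweightedLorentz} by replacing the weighted Lebesgue and Lorentz estimates with their Morrey counterparts. Given $f,g\in\sw$, I would use exactly the same decompositions
\[
D^s(fg)=T_{1,s}(D^sf,g)+T_{2,s}(f,D^sg)+T_{3,s}(f,D^sg)
\]
and
\[
J^s(fg)=\widetilde{T}_{1,s}(J^sf,g)+\widetilde{T}_{2,s}(f,J^sg)+\widetilde{T}_{3,s}(f,J^sg)
\]
introduced in the proof of Theorem~\ref{thm:KPweighted}, and reduce the proof to verifying that each summand satisfies the appropriate Morrey bound.

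For $j=1,2$, the operators $T_{j,s}$ and $\widetilde{T}_{j,s}$ are bilinear Coifman--Meyer multiplier operators and hence bilinear Calder\'on--Zygmund operators. Their boundedness from $\mathcal{L}^{p,\kappa}\times\mathcal{L}^{q,\kappa}$ into $\mathcal{L}^{r,\kappa}$ under our assumptions is a well-established fact in the bilinear Morrey theory and takes care of these two terms. Combined with the Morrey H\"older-type inequality $\norm{fg}{\mathcal{L}^{r,\kappa}}\lesssim \norm{f}{\mathcal{L}^{p,\kappa}}\norm{g}{\mathcal{L}^{q,\kappa}}$, valid whenever $\frac{1}{r}=\frac{1}{p}+\frac{1}{q}$ with a common $\kappa$, the estimates produce the desired right-hand sides of \eqref{eq:KPMorrey1} and \eqref{eq:KPMorrey2}.

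For the remaining operators $T_{3,s}$ and $\widetilde{T}_{3,s}$, the argument of Theorem~\ref{thm:KPweighted} writes each as a sum over $m\in\ent^n$ of bilinear operators whose coefficients decay like $(1+|m|)^{-(n+s)}$ and whose pointwise size is controlled by the product of square functions of the form
\[
S_m^j(f)(x)=\Big(\sum_{k\in\ent}|\Psi^j_{k,m}\ast f(x)|^2\Big)^{1/2}.
\]
The missing ingredient is therefore a Morrey-space version of Theorem~\ref{thm:sqweighted} that gives $\norm{S_m^j f}{\mathcal{L}^{p,\kappa}}\lesssim \norm{f}{\mathcal{L}^{p,\kappa}}$ with a constant uniform in $m$ and $\bar z$. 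Since the weighted bound in Theorem~\ref{thm:sqweighted} depends only on $[w]_{A_p}$ and is uniform in $m$ and $\bar z$, such a Morrey estimate should follow by invoking an extrapolation theorem from weighted Lebesgue spaces to Morrey spaces; the uniformity in $m$ and $\bar z$ is inherited directly from the corresponding uniformity of the weighted bound.

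Once the Morrey square function estimate is in hand, the summation argument of Theorem~\ref{thm:KPweighted} goes through essentially verbatim: the Cauchy--Schwarz inequality yields the pointwise control of $T_{3,s}(f,g)$ by $\sum_m (1+|m|)^{-(n+s)} S_m^1(f)\,S_m^2(g)$; the Morrey H\"older inequality reduces the estimate to
\[
\sum_{m\in\ent^n} (1+|m|)^{-(n+s)r_*}\norm{f}{\mathcal{L}^{p,\kappa}}^{r_*}\norm{g}{\mathcal{L}^{q,\kappa}}^{r_*},
\]
where $r_*=\min(r,1)$, upon using the fact that $\norm{\cdot}{\mathcal{L}^{r,\kappa}}^{r_*}$ is subadditive (trivial for $r\ge 1$ by the triangle inequality, and inherited from the corresponding property of $\int_B|\cdot|^r$ when $\frac{1}{2}<r<1$). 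The condition $s>\max\{0,n(\frac{1}{r}-1)\}$ guarantees $(n+s)r_*>n$, so the numerical series in $m$ converges. The flexibility to choose different triples $(p,q,\kappa)$ on the two summands of the right-hand side is then immediate, since each summand is estimated independently. I expect the main obstacle to lie in setting up the Morrey version of Theorem~\ref{thm:sqweighted} cleanly, in particular ensuring that the Rubio de~Francia extrapolation used there transfers compatibly to Morrey spaces; once this uniform square function bound is established, the rest of the proof is a direct transcription of the arguments already given for the weighted Lebesgue and Lorentz cases.
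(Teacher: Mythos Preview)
Your proposal is correct and follows essentially the same route as the paper: the authors reduce to the decomposition into $T_{j,s}$ and $\widetilde{T}_{j,s}$, handle $j=1,2$ via the boundedness of bilinear Calder\'on--Zygmund operators on Morrey spaces (their Theorem~\ref{thm:CZMorrey}, from \cite{MR3132676}), and obtain the Morrey square-function estimate (their Theorem~\ref{thm:sqMorrey}) from Theorem~\ref{thm:sqweighted} via a weighted-to-Morrey extrapolation result of Duoandikoetxea, exactly as you anticipated. The only difference is that the paper pins down the two external ingredients by explicit citation rather than leaving them as ``well-established'' or ``should follow''.
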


As above, the proof of Theorem~\ref{thm:KPMorrey} follows from a
version of Theorem~\ref{thm:sqweighted}  for Morrey spaces and the boundedness of bilinear Calder\'on-Zygmund operators in Morrey spaces. We finish this section with the statements of  such results.

\begin{theorem}\label{thm:sqMorrey}
  Let $\Psi\in \sw$ be such that $\supp(\widehat{\Psi}) \subset \{\xi\in\rn: c_1<\abs{\xi}<c_2\}$
  for some $0<c_1<c_2<\infty$.   Given a sequence
  $\bar{z}=\{z_{k,m}\}_{k\in\ent,m\in\ent^n}\subset \rn,$  define
  $\Psi^{\bar{z}}_{k,m}(x)=2^{kn}\Psi(2^{k}(x+z_{k,m}))$ for
  $x\in\rn,$ $m\in\ent^n$ and $k\in\ent.$   If $0<\kappa\le n,$ then for all $f\in\sw$
\[
\bigg\|\bigg(\sum_{k\in\ent }\abs{\Psi^{\bar{z}}_{k,m}\ast
  f}^2\bigg)^{\frac{1}{2}}\bigg\|_{\mathcal{L}^{p,\kappa}}\lesssim \norm{f}{\mathcal{L}^{p,\kappa}},  
\]
where the implicit constants  are independent of $m$ and $\bar{z}.$
\end{theorem}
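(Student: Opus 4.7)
The plan is to deduce the Morrey estimate from the weighted Lebesgue estimate of Theorem~\ref{thm:sqweighted} by constructing, for each ball $B=B(x_0,R)$, an $A_1$ weight adapted to $B$ whose $A_1$-characteristic is uniform in $x_0$ and $R$. This is the standard transfer principle from weighted norm inequalities to Morrey spaces, and it works here precisely because the constants in Theorem~\ref{thm:sqweighted} depend on $m$ and $\bar{z}$ only through $\Psi$ and $[w]_{A_p}$, so that once a weight with uniform $A_1$ characteristic is chosen, the resulting estimate is uniform in $m$, $\bar{z}$, and $B$.

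Fix $\beta\in(n-\kappa,n)$ (a nonempty interval since $\kappa>0$) and, given $B=B(x_0,R)$, set
\[
w_B(x)=R^{\kappa-n}\min\!\Bigl(1,\bigl(R/|x-x_0|\bigr)^{\beta}\Bigr).
\]
Then $w_B$ is comparable, up to a multiplicative constant, to $R^{\kappa-n+\beta}(R+|x-x_0|)^{-\beta}$, so by the standard fact that $(1+|x|)^{-\beta}\in A_1$ for $0<\beta<n$, together with translation and scaling invariance of the $A_1$ class, $[w_B]_{A_1}$ is bounded by a constant depending only on $n$, $\kappa$, and $\beta$. The argument now has three steps. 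First, since $w_B\equiv R^{\kappa-n}$ on $B$,
\[
R^{\kappa-n}\int_B \Bigl(\sum_{k\in\ent}|\Psi^{\bar{z}}_{k,m}\ast f(x)|^2\Bigr)^{p/2}dx\le\int_{\rn} \Bigl(\sum_{k\in\ent}|\Psi^{\bar{z}}_{k,m}\ast f(x)|^2\Bigr)^{p/2}w_B(x)\,dx.
\]
Second, Theorem~\ref{thm:sqweighted} applied with $w=w_B$ bounds the right-hand side by $C\|f\|_{L^p(w_B)}^p$, with $C$ independent of $B$, $m$ and $\bar{z}$. Third, $\|f\|_{L^p(w_B)}\lesssim\|f\|_{\mathcal{L}^{p,\kappa}}$: the contribution over $B$ is bounded by $R^{\kappa-n}\cdot R^{n-\kappa}\|f\|_{\mathcal{L}^{p,\kappa}}^p$, and the tail over $\rn\setminus B$ is handled by the dyadic decomposition $\{2^jR\le|x-x_0|<2^{j+1}R\}$, yielding a geometric series that converges precisely because $\beta>n-\kappa$. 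Combining these three inequalities and taking the supremum over $B$ gives the claim.

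The main obstacle in trying to handle the argument directly by a ball decomposition $f=f\chi_{2B}+f\chi_{(2B)^c}$ is that the translation $z_{k,m}$ destroys the usual localization, since the effective support of $\Psi^{\bar{z}}_{k,m}$ at scale $2^{-k}$ is centered at $-z_{k,m}$ rather than the origin and $|z_{k,m}|$ is unrestricted; so the far piece cannot be controlled by pointwise kernel decay. The route above circumvents this difficulty by relying on Theorem~\ref{thm:sqweighted} as a black box, where the required uniformity in $\bar{z}$ has already been established, and by moving the control of the translation entirely into the weighted estimate.
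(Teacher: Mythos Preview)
Your proof is correct and follows essentially the same route as the paper. The paper deduces the result from Theorem~\ref{thm:sqweighted} by citing an extrapolation principle of Duoandikoetxea (that $L^p(w)$ inequalities holding for all $w\in A_1$ with constants depending only on $[w]_{A_1}$ transfer to $\mathcal{L}^{p,\kappa}$), and your argument---constructing for each ball an $A_1$ weight $w_B$ with uniformly bounded characteristic and controlling the tail by a dyadic annulus decomposition---is exactly the standard proof of that principle, carried out inline rather than invoked as a black box.
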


Theorem~\ref{thm:sqMorrey} is a direct consequence of  Theorem~\ref{thm:sqweighted} and the following result.

\begin{prop}[see Theorem 6.7 in \cite{JDSummer2005}] Let $\F$ be a family of ordered pairs of non-negative measurable functions defined on $\rn$ and  $1\le p <\infty.$ Suppose that 
 for every $w\in A_1,$  
 \[
 \int_{\rn}\abs{f(x)}^pw(x)\,dx\lesssim \int_{\rn}\abs{g(x)}^pw(x)\,dx
 \]
for all $(f,g)\in \F$, where the implicit constants depend only on $p$ and $[w]_{A_1}.$ If $0 < \kappa\le  n,$ then
\[
\norm{f}{\mathcal{L}^{p,\kappa}}\lesssim \norm{g}{\mathcal{L}^{p,\kappa}} 
\]
for all $(f,g)\in \F.$
\end{prop}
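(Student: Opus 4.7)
The plan is to reduce the Morrey estimate to the weighted $L^p$ hypothesis by choosing, for each ball, an $A_1$ weight that is comparable to $1$ on that ball and decays quickly enough off it. Fix a ball $B\subset\rn$ with center $x_B$ and radius $r_B$; my goal is to bound
\[
\frac{1}{|B|^{1-\kappa/n}}\int_B |f(x)|^p\,dx
\]
by $\|g\|_{\mathcal L^{p,\kappa}}^p$ with a constant independent of $B$. Since $\mathcal M\chi_B(x)=1$ for $x\in B$ and $\mathcal M\chi_B\lesssim |B|/(|B|+|x-x_B|^n)$ everywhere, the Coifman-Rochberg theorem yields $w_B:=(\mathcal M\chi_B)^{\delta}\in A_1$ for any $0<\delta<1$, with $[w_B]_{A_1}\le C_{n,\delta}$ uniform in $B$.

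Since $\chi_B\le w_B$, the hypothesis applied to the weight $w_B$ gives
\[
\int_B |f|^p\,dx\le \int_\rn |f|^p w_B\,dx\lesssim \int_\rn |g|^p w_B\,dx,
\]
with constant depending only on $p,\delta,n$. Next I would decompose $\rn=B\cup\bigcup_{k\ge 1}(2^kB\setminus 2^{k-1}B)$ and use the pointwise bound $w_B(x)\lesssim 2^{-kn\delta}$ on $2^kB\setminus 2^{k-1}B$ together with the definition of the Morrey norm,
\[
\int_{2^kB}|g|^p\,dx\le |2^kB|^{1-\kappa/n}\,\|g\|_{\mathcal L^{p,\kappa}}^p=2^{k(n-\kappa)}|B|^{1-\kappa/n}\,\|g\|_{\mathcal L^{p,\kappa}}^p,
\]
to obtain
\[
\int_\rn |g|^p w_B\,dx\lesssim |B|^{1-\kappa/n}\,\|g\|_{\mathcal L^{p,\kappa}}^p\sum_{k\ge 0}2^{k(n-\kappa-n\delta)}.
\]

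The geometric series converges provided $n\delta>n-\kappa$, so the main step is to choose $\delta\in(1-\kappa/n,1)$, which is possible precisely because $\kappa>0$. With this choice, dividing by $|B|^{1-\kappa/n}$ and taking the supremum over all balls yields $\|f\|_{\mathcal L^{p,\kappa}}\lesssim\|g\|_{\mathcal L^{p,\kappa}}$ with constants depending only on $p$, $n$, $\kappa$, and the implicit constants in the hypothesis. The only subtlety is a standard truncation argument to guarantee that the weighted integrals appearing in the hypothesis are finite so that the inequality applies; this can be handled by first replacing $f$ by $\min(f,N)\chi_{B(0,N)}$ (or working with $(f,g)$ pairs already known to have finite weighted norm for $w_B$, which follows from the rapid decay of $w_B$ and the Morrey bound on $g$), then passing to the limit by monotone convergence.
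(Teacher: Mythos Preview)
The paper does not give its own proof of this proposition; it is stated with a reference to Duoandikoetxea's lecture notes. Your argument is correct and is precisely the standard proof found there: build an $A_1$ weight $w_B=(\mathcal{M}\chi_B)^\delta$ via Coifman--Rochberg with $[w_B]_{A_1}$ uniform in $B$, apply the hypothesis, and control $\int|g|^p w_B$ by a dyadic annular decomposition, choosing $\delta\in(1-\kappa/n,1)$ so the geometric series converges. One minor remark: the truncation step you mention is not really needed, since if $\|g\|_{\mathcal L^{p,\kappa}}=\infty$ there is nothing to prove, and if it is finite your annular estimate already shows $\int|g|^p w_B<\infty$, so the hypothesis applies directly to the given pair $(f,g)\in\mathcal F$.
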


\begin{thm}[Particular case of Theorem 1 in \cite{MR3132676}]\label{thm:CZMorrey}
 Let  $1<p,q <\infty$ and $\frac{1}{2}<r<\infty$ be such that $\frac{1}{r}=\frac{1}{p}+\frac{1}{q}$ and $0<\kappa\le n.$  If $T$ is a bilinear Calder\'on-Zygmund operator,  then $T$ is bounded from $\mathcal{L}^{p,\kappa}\times \mathcal{L}^{q,\kappa}$ into $\mathcal{L}^{r,\kappa}.$
\end{thm}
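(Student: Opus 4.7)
The approach would be the standard four-piece decomposition from Morrey-space theory, adapted to a bilinear kernel. Fix a ball $B=B(x_0,r)$ and write $f=f_1+f_2$, $g=g_1+g_2$ with $f_1=f\chi_{2B}$, $f_2=f-f_1$, and similarly for $g$. Bilinearity splits $T(f,g)$ into four pieces, and the goal reduces to bounding each piece in $L^r(B)$ by a constant times $|B|^{(1-\kappa/n)/r}\norm{f}{\mathcal{L}^{p,\kappa}}\norm{g}{\mathcal{L}^{q,\kappa}}$.

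The ``local'' piece $T(f_1,g_1)$ is the easy one: invoking the $L^p\times L^q\to L^r$ boundedness of $T$ over the full range $1<p,q<\infty$, $\tfrac{1}{2}<r<\infty$ (a standard consequence of the bilinear Calder\'on-Zygmund theory of Grafakos and Torres~\cite{MR1880324}) and then rescaling via $\norm{f}{L^p(2B)}\lesssim |2B|^{(1-\kappa/n)/p}\norm{f}{\mathcal{L}^{p,\kappa}}$ (and its analogue for $g$) gives the required Morrey estimate. For the three off-diagonal pieces, only the kernel size bound $|K(x,y,z)|\lesssim (|x-y|+|x-z|+|y-z|)^{-2n}$ is needed. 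Decomposing $(2B)^c$ into dyadic annuli $A_k=2^{k+1}B\setminus 2^kB$, $k\ge 1$, and noting that $|x-y|\gtrsim 2^kr$ for $x\in B$ and $y\in A_k$, one obtains for $T(f_2,g_2)$ the pointwise estimate
\begin{equation*}
|T(f_2,g_2)(x)|\lesssim \sum_{j,k\ge 1}(2^{\max(j,k)}r)^{-2n}\int_{A_k}|f|\int_{A_j}|g|.
\end{equation*}
H\"older's inequality gives $\int_{A_k}|f|\lesssim (2^kr)^{n-\kappa/p}\norm{f}{\mathcal{L}^{p,\kappa}}$, and similarly for $g$. Summing in $j$ and then in $k$ produces a geometric series of the form $\sum_k 2^{-k\kappa/r}$, yielding $|T(f_2,g_2)(x)|\lesssim r^{-\kappa/r}\norm{f}{\mathcal{L}^{p,\kappa}}\norm{g}{\mathcal{L}^{q,\kappa}}$ for $x\in B$, and integrating over $B$ gives the Morrey bound. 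The mixed pieces $T(f_1,g_2)$ and $T(f_2,g_1)$ follow from the same scheme with only a single dyadic sum.

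The main technical obstacle will be verifying convergence of the geometric series, which is exactly where the hypothesis $\kappa>0$ enters; the endpoint $\kappa=0$ (the $L^\infty$-type case) would require a separate argument. It is also worth noting that only the size estimate for the kernel is used in handling the off-diagonal pieces, so the proof in fact extends to rougher bilinear singular integrals provided one has the necessary $L^p\times L^q\to L^r$ boundedness to seed the local piece.
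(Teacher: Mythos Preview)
The paper does not give its own proof of this statement; it is quoted as a particular case of a result in the literature (Wang--Yi~\cite{MR3132676}) and simply invoked as a black box in the proof of Theorem~\ref{thm:KPMorrey}. Your proposed argument is precisely the standard one used in that reference and is correct: split each input into a local part on $2B$ and a far part, use the unweighted $L^p\times L^q\to L^r$ boundedness of bilinear Calder\'on--Zygmund operators for the local piece, and use only the kernel size condition together with dyadic annular decompositions for the three remaining pieces. The convergence of the resulting geometric series is exactly where the hypothesis $\kappa>0$ is needed, as you observe.

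Two small points worth tidying. First, you have overloaded the symbol $r$ for both the target Lebesgue exponent and the radius of the ball $B$; use a different letter for the radius. Second, when the exponent $r<1$ the $L^r$ quasi-norm is not subadditive, so to combine the four pieces you should pass to $\|\cdot\|_{L^r}^{r}$ (or $\|\cdot\|_{L^r}^{\min(r,1)}$) before summing; since there are only four terms this is harmless, but it should be said.
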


\def\ocirc#1{\ifmmode\setbox0=\hbox{$#1$}\dimen0=\ht0 \advance\dimen0
  by1pt\rlap{\hbox to\wd0{\hss\raise\dimen0
  \hbox{\hskip.2em$\scriptscriptstyle\circ$}\hss}}#1\else {\accent"17 #1}\fi}

\end{document}